\newcommand\blfootnote[1]{%
\begingroup
\renewcommand\thefootnote{}\footnote{#1}%
\addtocounter{footnote}{-1}%
\endgroup
}
\title[Nonlocal equations with kernels of general order]{Nonlocal equations with kernels of general order}
\author[Ok]{Jihoon Ok}
\address{Department of Mathematics, Sogang University, Seoul 04107, Republic of Korea}
\email{jihoonok@sogang.ac.kr}
\author[Song]{Kyeong Song}
\address{School of Mathematics, Korea Institute for Advanced Study, Seoul 02455, Republic of Korea}
\email{kyeongsong@kias.re.kr}
\subjclass[2020]{
35R11; 
35B65;  
47G20; 
35D30; 	
35R05. 
}
\keywords{Nonlinear nonlocal equation; general order; regularity; Harnack inequality; fractional Sobolev space}
\newtheorem{theorem}{Theorem}[section]
\newtheorem{proposition}[theorem]{Proposition}
\newtheorem{lemma}[theorem]{Lemma}
\newtheorem{corollary}[theorem]{Corollary}
\theoremstyle{definition}
\newtheorem{remark}[theorem]{Remark}
\numberwithin{equation}{section}
\def\eqn#1$$#2$${\begin{equation}\label#1#2\end{equation}}
\def\charfn_#1{{\raise1.2pt\hbox{$\chi_{\kern-1pt\lower3pt\hbox{{$\scriptstyle#1$}}}$}}}
\newcommand{\pushright}[1]{\ifmeasuring@#1\else\omit\hfill$\displaystyle#1$\fi\ignorespaces}
\newcommand{\pushleft}[1]{\ifmeasuring@#1\else\omit$\displaystyle#1$\hfill\fi\ignorespaces}
\DeclareMathOperator*{\osc}{osc}
\def\diam{\operatorname{diam}}
\newcommand{\tail}{{\rm Tail}}
\def\R{\mathbb R}
\def\loc{{\operatorname{loc}}}
\newcommand{\supp}{{\rm supp}}
\def\mean#1{\mathchoice%
          {\mathop{\kern 0.2em\vrule width 0.6em height 0.69678ex depth -0.58065ex
                  \kern -0.8em \intop}\nolimits_{\kern -0.4em#1}}%
          {\mathop{\kern 0.1em\vrule width 0.5em height 0.69678ex depth -0.60387ex
                  \kern -0.6em \intop}\nolimits_{#1}}%
          {\mathop{\kern 0.1em\vrule width 0.5em height 0.69678ex
              depth -0.60387ex
                  \kern -0.6em \intop}\nolimits_{#1}}%
          {\mathop{\kern 0.1em\vrule width 0.5em height 0.69678ex depth -0.60387ex
                  \kern -0.6em \intop}\nolimits_{#1}}}
\newcommand{\vertiii}[1]{{\left\vert\kern-0.25ex\left\vert\kern-0.25ex\left\vert #1 
			\right\vert\kern-0.25ex\right\vert\kern-0.25ex\right\vert}}
\def\avenorm#1{\mathchoice%
          {\mathop{\kern 0.2em\vrule width 0.6em height 0.69678ex depth -0.58065ex
                  \kern -0.545em \|{#1}\|}}%
          {\mathop{\kern 0.1em\vrule width 0.5em height 0.69678ex depth -0.60387ex
                  \kern -0.495em \|{#1}\|}}%
          {\mathop{\kern 0.1em\vrule width 0.5em height 0.69678ex depth -0.60387ex
                  \kern -0.495em \|{#1}\|}}%
          {\mathop{\kern 0.1em\vrule width 0.5em height 0.69678ex depth -0.60387ex
                  \kern -0.495em \|{#1}\|}}}
\newtoks\by
\newtoks\paper
\newtoks\book
\newtoks\jour
\newtoks\yr
\newtoks\pages
\newtoks\vol
\newtoks\publ
\def\ota{{\hbox{\bf ???}}}
\def\cLear{\by=\ota\paper=\ota\book=\ota\jour=\ota\yr=\ota
\pages=\ota\vol=\ota\publ=\ota}
\def\endpaper{\the\by, \textit{\the\paper},
{\the\jour} \textbf{\the\vol} (\the\yr), \the\pages.\cLear}
\def\endbook{\the\by, \textit{\the\book},
\the\publ, \the\yr.\cLear}
\def\endpap{\the\by, \textit{\the\paper}, \the\jour.\cLear}
\def\endproc{\the\by, \textit{\the\paper}, \the\book, \the\publ,
\the\yr, \the\pages.\cLear}
\begin{document}

\begin{abstract}
We consider a broad class of nonlinear integro-differential equations with a kernel whose differentiability order is described by a general function $\phi$. This class includes not only the fractional $p$-Laplace equations, but also borderline cases when the fractional order approaches~$1$. Under mild assumptions on $\phi$, we establish  sharp Sobolev-Poincar\'e type inequalities for the associated Sobolev spaces, which are connected to a question raised by Brezis (Russian Math. Surveys 57:693--708, 2002). Using these inequalities, we prove H\"older regularity and Harnack inequalities for weak solutions to such nonlocal equations. All the estimates in our results remain stable as the associated nonlocal energy functional approaches its local counterpart. 
\end{abstract}

\maketitle

\blfootnote{
J. Ok was supported by the National Research Foundation of Korea(NRF) grant
funded by the Korea government(MSIT) (NO. 2022R1C1C1004523).  K. Song was supported by a KIAS individual grant (MG091702) at Korea Institute for Advanced Study.}

\section{Introduction}
In this paper, we study nonlinear integro-differential equations of the form
\begin{equation}\label{mainPDE}
\mathcal{L}u(x) \coloneqq \mathrm{P.V.} \int_{\mathbb{R}^{n}} |u(x)-u(y)|^{p-2} (u(x)-u(y)) K(x,y)\, dy =0  \quad  \text{in }\ \Omega,
\end{equation}
where $\Omega\subset \mathbb{R}^n$ is a bounded domain, $p \in (1,\infty)$ is a growth exponent, and $K: \mathbb{R}^{n}\times \mathbb{R}^{n} \to \mathbb{R}$ is a measurable, symmetric kernel that satisfies
\begin{equation}\label{kernel}
\Lambda^{-1}
\frac{\phi(|x-y|)}{ |x-y|^{n+p}}  \le K(x,y) \le \Lambda  \frac{\phi(|x-y|)}{|x-y|^{n+p}}  \quad \text{for a.e. }\ x,y\in \mathbb{R}^{n},
\end{equation}
for a constant $\Lambda \ge 1$.  
Assumptions on the function $\phi:[0,\infty)\to[0,\infty)$ will be given in Section~\ref{result} below.

Notably, if $K(x,y)=\phi(|x-y|)|x-y|^{-n-p} = |x-y|^{-n-s p}$ (i.e., $\phi(t)=t^{(1-s)p}$) for a constant $s \in(0,1)$, then the equation \eqref{mainPDE} becomes the ($s$-)fractional $p$-Laplace equation $(-\Delta_{p})^{s}=0$, and moreover when $p=2$, it reduces to the fractional Laplace equation $(-\Delta)^{s}u=0$. 
There have been significant developments in the theory of nonlocal equations in the last two decades, especially since the work of Caffarelli and Silvestre \cite{CS07}. We refer to \cite{FRRO} and references therein for various results for linear nonlocal equations of the fractional Laplacian type. In particular, H\"older regularity  and  Harnack inequalities for linear nonlocal equations of the form \eqref{mainPDE} with $p=2$ and $K$ satisfying the uniform ellipticity condition $K(x,y)\approx |x-y|^{-n-2s}$ have been established in \cite{CCV11,Kas07,Kas09,Kas11}.

One of the major issues in the theory of nonlocal operators is to find a general class of kernels under which these regularity results still hold. 
From a probabilistic perspective, linear nonlocal operators with kernels satisfying more general differentiability conditions  have also been actively studied. 
Bass and Kassmann \cite{BK05a,BK05b} and Silvestre \cite{Sil} considered  measurable kernels satisfying  $|x-y|^{-n-2s_1}\lesssim K(x,y)\lesssim |x-y|^{-n-2s_2}$ for some $0<s_1<s_2<1$ or $K(x,y)\approx |x-y|^{-n-2s(x)}$, and proved H\"older regularity and Harnack inequalities for bounded solutions to the linear nonlocal equations \eqref{mainPDE} with  $p=2$ and such kernels $K(x,y)$. Moreover, regularity theory for the linear nonlocal equations with kernels  of the form $K(x,y)\approx \psi(|x-y|)^{-1}|x-y|^{-n}$, where $\psi$ satisfies the so-called weak scaling condition
\begin{equation}\label{psicondition}
L^{-1}\left(\frac{t_2}{t_1}\right)^{\alpha_1} \le \frac{\psi(t_2)}{\psi(t_1)} \le L \left(\frac{t_2}{t_1}\right)^{\alpha_2} \quad 
\text{for any }\ 0< t_1 \le t_2,
\end{equation}
for some $0<\alpha_1 \le \alpha_2<2$, has been studied in, for instance, \cite{Bae15,BK15,KK15,KKK13,KKLL}. This condition on $\psi$ naturally arises in the study of L\'evy processes. A major open problem in this area is concerned with establishing regularity estimates for nonlocal equations corresponding to non-L\'evy processes, particularly when the weak scaling indices $\alpha_1$ and $\alpha_2$ are not necessarily strictly less than $2$ (cf. \cite{BKKL19,KS15,Sz17}). In this regard, the authors of \cite{BKKL19} obtained heat kernel estimates when the kernel $K(x,y)\approx \psi(|x-y|)^{-1}|x-y|^{-n}$ satisfies \eqref{psicondition} for some $0< \alpha_1\le \alpha_2$ and $\int_0^1 (t/\psi(t)) \,dt <\infty$, and, using these, proved that any bounded parabolic function is H\"older continuous. 

In this paper, we consider a general class of kernels $K$ with a mild condition, which covers the one in \cite{BKKL19}. We establish the De Giorgi-Nash-Moser theory, including local boundedness, H\"older continuity and Harnack inequalities with sharp estimates involving a nonlocal tail term, and without assuming the a priori boundedness of the solution. Moreover, we present a purely analytic proof, independent of probability theory, that is naturally extended to the nonlinear equation \eqref{mainPDE} with $p$-growth, and could be applicable to corresponding parabolic nonlocal problems.

The De Giorgi-Nash-Moser theory for weak solutions to the nonlinear nonlocal equations \eqref{mainPDE}  with measurable kernel $K(x,y)\approx |x-y|^{-n-s p}$ was first established by Di Castro, Kuusi, and Palatucci \cite{DKP14,DKP16}. 
Cozzi \cite{Coz17} further extended these results to minimizers of a larger class of non-differentiable functionals, by introducing the notion of the fractional De Giorgi class. Furthermore, the regularity estimates obtained in \cite{Coz17} remain stable as the differentiability parameter $s$ approaches to $1$, which is consistent with the convergence of the associated $W^{s,p}$-energies shown in \cite{BBM01,Bre02}; see also \cite{IN10} for the limiting behavior of viscosity solutions. 
For various issues involving the $s$-fractional $p$-Laplacian, see \cite{BDOR,BK,DN,KKL19,KMS15a,Lin16,Sch} and references therein. We further refer to \cite{BKO,BKS,BOS,CK,CKW22,CKW23,DP19,Ok23} for regularity results for nonlocal equations with non-standard growth.

Here we outline the main points of this paper, along with a brief discussion on our assumptions and techniques. 
Equation \eqref{mainPDE} is modeled on the example
\[ \mathrm{P.V.} \int_{\mathbb{R}^{n}} |u(x)-u(y)|^{p-2} (u(x)-u(y)) \frac{\phi(|x-y|)}{|x-y|^{n+p}}\, dy =0  \quad  \text{in }\ \Omega, \]
that is, the case $K(x,y)=\phi(|x-y|)|x-y|^{-n-p}$. 
A direct calculation shows that this is the Euler-Lagrange equation of the energy functional
\begin{equation}\label{model.ftnal}
w \mapsto \int_{\mathbb{R}^{n}}\int_{\mathbb{R}^{n}} \frac{|w(x)-w(y)|^p}{|x-y|^p}\frac{\phi(|x-y|)}{|x-y|^n} \,dy\,dx.
\end{equation}
The fractional energies of the form \eqref{model.ftnal},
 which generalize the usual Gagliardo seminorms of $W^{s,p}$-functions, were studied by Bourgain, Brezis, and Mironescu \cite{BBM01} in the characterization of the classical Sobolev space $W^{1,p}$; see also \cite{Bre02} for a  different approach. Related Poincar\'e type inequalities and their applications to the Ginzburg-Landau model can be found in \cite{DGM,Ponce} and \cite{BBM04}, respectively. Note that we will assume the Dini type condition \eqref{Dini} on the function $\phi$. If this condition fails, then the energy \eqref{model.ftnal} is finite only for constant functions, see \cite[Proposition~3]{Bre02}. Therefore,  this condition essentially needed. Function spaces linked to \eqref{model.ftnal} will be discussed in Section~\ref{space} below. 
Taking into account the aforementioned literature on nonlocal equations and function spaces, a natural question is whether similar regularity results can be obtained for more general equations of the type \eqref{mainPDE} with the kernel $K$ and the function $\phi$ satisfying \eqref{kernel} and \eqref{Dini}, respectively.  However, we could not find any results in this direction.

In order to obtain our regularity results, we assume that $\phi$ satisfies \eqref{Dini} and \eqref{adec} below. As mentioned above, the first condition is essential. The second condition  \eqref{adec} is a coercivity condition and similar conditions have been assumed even in the case of linear nonlocal equations \cite{KKK13,KKLL}. Therefore, our regularity results can unify most results for nonlocal equations of the form \eqref{mainPDE} with \eqref{kernel}, and even can cover borderline cases as $s$ approaches $1$ such as
\[
K(x,y) = \max\left\{\frac{(-\log |x-y|)^{-\gamma}}{|x-y|^{n+p}}, \frac{1}{|x-y|^{n+sp}}\right\}, \quad \gamma>1  \ \text{ and }\ s\in (0,1).
\]  
To our best knowledge, each of our results is the first one for nonlinear nonlocal equations with $p$-growth and general differentiability order including the above borderline case. 
We also emphasize that our regularity estimates remain stable as the nonlocal functional \eqref{model.ftnal} approaches the local $p$-Dirichet functional, see Remark~\ref{rmk.conv} and Proposition~\ref{prop.energy} below. 
To this aim, the key step is to establish a sharp Sobolev-Poincar\'e type inequality for generalized fractional Sobolev spaces related to \eqref{model.ftnal} (see Theorem~\ref{thm:SoboPoin} and Corollary~\ref{cor.SP} below), which is also stable as the lower bound of the differentiability order approaches $1$.
This is also a new result of its own interest, which gives a potential resolution to \cite[Problem~4]{Bre02}. In order to obtain this inequality, we improve the method used in \cite{BDOR}, where the authors obtained a Sobolev-Poincar\'e inequality for weighted fractional Sobolev spaces that is also stable as the differentiability order approaches $1$. We first derive fractional Riesz-type potential estimates involving the function $\phi$ with sharp constant with respect to $\phi$, and then apply a well-known Hardy-Littlewood-Sobolev inequality. Once we obtain a sharp Sobolev-Poincar\'e estimate, 
we can extend the approaches of \cite{DKP14,DKP16} to the setting of our problem, proving various fundamental estimates such as Caccioppoli estimates, logarithmic estimates and expansion of positivity. 

Let us introduce our main regularity results.

\subsection{Assumptions and main results}\label{result}
Let $\phi : [0,\infty)\to [0,\infty)$ be a measurable function satisfying $\phi(0)=0$ and $\phi(t)>0$ for $t>0$.  
We  assume that the following Dini condition holds:
\begin{equation}\label{Dini}
\int_{0}^{1}\phi(t)\,\frac{dt}{t} < \infty.
 \end{equation}
We also assume that there exist constants $0<s<\tilde{s}<\infty$ such that
the map $t\mapsto \phi(t)/t^{(1-s)p}$ is almost decreasing on $(0,\infty)$ and the map $t\mapsto \phi(t)/t^{(1-\tilde{s})p}$ is almost increasing on $(0,\infty)$,
 i.e., there exists a constant $L\ge1$ such that
\begin{equation}\label{adec}
\frac{\phi(t_{2})}{t_{2}^{(1-s)p}} \le L \frac{\phi(t_{1})}{t_{1}^{(1-s)p}}
\end{equation}
and
\begin{equation}\label{ainc}
\frac{\phi(t_{2})}{t_{2}^{(1-\tilde{s})p}} \ge L^{-1}\frac{\phi(t_{1})}{t_{1}^{(1-\tilde{s})p}}
\end{equation}
for any $0<t_{1} \le t_{2}<\infty$. 
These conditions are equivalent to 
\begin{equation}\label{adecequiv}
L^{-1}\lambda^{(1-\tilde{s})p}\phi(t) \le \phi(\lambda t)  \le L \lambda^{(1-s)p} \phi(t) \quad \text{for all }\ t> 0 \ \text{ and } \ \lambda\ge 1.
\end{equation}

\begin{remark}\label{rmk:adec}
The almost decreasing condition \eqref{adec} implies the same condition with any $q> (1-s)p$ in place of $(1-s)p$. 
In the same way, the almost increasing condition \eqref{ainc} implies the same condition with any $\tilde{q} < (1-\tilde{s})p$ in place of $(1-\tilde{s})p$. 
Hence, throughout this paper, we may assume that 
\[ 0 < s < \min\{1,n/p\} \quad \text{and} \quad \tilde{s}>1. \]
\end{remark}

In light of \eqref{Dini}, we are further able to define $\Phi:[0,\infty)\to [0,\infty)$ by $\Phi(0)\coloneqq0$ and
\begin{equation}\label{def.Phi}
\Phi(t) \coloneqq \int_{0}^{t}\phi(\tau)\,\frac{d\tau}{\tau}  \qquad \text{for any}\;\; t \in (0,\infty).
\end{equation} 
Then $\Phi$ is nondecreasing and continuous on $[0,\infty)$ and differentiable on $(0,\infty)$. It also satisfies \eqref{adec} and \eqref{ainc}, hence  \eqref{adecequiv}, with $\phi$ replaced by $\Phi$. The inequality \eqref{adecequiv} for $\phi$ and also $\Phi$  will be frequently used throughout this paper. Moreover, we obtain from \eqref{adec} that
\begin{equation}\label{phi.Phi} 
\Phi(t) = \int_0^t \phi(\tau)\frac{d\tau}{\tau} \ge    \frac{\phi(t)}{L t^{(1-s)p}}  \int_0^t \tau^{(1-s)p}\frac{d\tau}{\tau} = \frac{\phi(t)}{L(1-s)p} > \frac{\phi(t)}{Lp} 
 \qquad \text{for all }\ t > 0. 
\end{equation}

The equation  \eqref{mainPDE} with $K$ and $\phi$ satisfying \eqref{kernel}, \eqref{Dini}, \eqref{adec} and \eqref{ainc} covers the following examples: 
\begin{itemize}
\item[(i)] For constants $0<s< \tilde{s}<1$ and $L\ge 1$, $\phi$ satisfies that 
\[
L^{-1} \left(\frac{t_2}{t_1}\right)^{(1-\tilde{s})p} \le \frac{\phi(t_2)}{\phi(t_1)} \le L \left(\frac{t_2}{t_1}\right)^{(1-s)p} \quad 
\text{for any }\ 0<t_1 \le t_2 <\infty.
\]
This model with $p=2$ has been studied in, for instance, \cite{Bae15,BK15,BK05a,BK05b,KKLL,Sil}. The following are specific examples satisfying the previous inequality: 
\[  
 t^{(1-s)p}, \quad  t^{(1-s)p} + t^{(1-\tilde{s})p}, \quad \min\big\{t^{(1-s)p},t^{(1-\tilde{s})p}\big\}, \quad t^{(1-s)p} [\log\left(1+t^{-1}\right)]^\gamma \ \text{ for } \ \gamma>0.\]
 
 \item[(ii)] For constants $0<s<1<\gamma$, 
\[
\phi(t)=\max \left\{ (- \log t)^{-\gamma}, t^{(1-s)p} \right\}.
\]
In particular, the second assumption \eqref{adec} holds with $L=1$.
Observe that this model is not covered by the one in (i). The case $p=2$ has been studied in \cite{BKKL19}.

\item[(iii)] Let $\psi(t) \coloneqq t^{p}/\phi(t)$. Then $K(x,y) \approx \psi(|x-y|)^{-1}|x-y|^{-n}$, and \eqref{Dini}--\eqref{ainc} imply that
\[
\int_0^1\frac{t^{p-1}}{\psi(t)}\, dt <\infty,
\quad\text{and}\quad
L^{-1}\left(\frac{t_2}{t_1}\right)^{sp} \le \frac{\psi(t_2)}{\psi(t_1)} \le L\left(\frac{t_2}{t_1}\right)^{\tilde{s}p} \ \ \text{for }\ 0<t_1 \le t_2 < \infty.
\]
In particular, when $p=2$, the latter inequality reduces to \eqref{psicondition}. 
\end{itemize}

With the function space $\mathbb{W}^{\phi,p}(\Omega)$ to be introduced in the next section, here we define weak solutions. We say that a function $u \in \mathbb{W}^{\phi,p}(\Omega)$ is a weak supersolution (resp. subsolution) to the integro-differential equation 
\eqref{mainPDE}
 if 
 \begin{equation*}
\iint_{\mathcal{C}_{\Omega}}|u(x)-u(y)|^{p-2}(u(x)-u(y))(\zeta(x)-\zeta(y))K(x,y)\,dx\,dy \ge 0 \;\; (\text{resp. } \le 0) 
\end{equation*}
for every $\zeta \in \mathbb{W}^{\phi,p}(\Omega)$ with $\zeta \ge 0$ a.e. in $\mathbb{R}^{n}$ and $\zeta = 0$ a.e. in $\mathbb{R}^{n}\setminus\Omega$, where 
\begin{equation}\label{mcdef} 
\mathcal{C}_{\Omega} \coloneqq (\mathbb{R}^{n}\times \mathbb{R}^{n}) \setminus ((\mathbb{R}^{n}\setminus \Omega)\times (\mathbb{R}^{n}\setminus \Omega)).
\end{equation}
Moreover, we say that $u \in \mathbb{W}^{\phi,p}(\Omega)$ is a weak solution to \eqref{mainPDE} if it is both a weak supersolution and a weak subsolution to \eqref{mainPDE}. 
Existence and uniqueness of weak solutions to Dirichlet problems involving \eqref{mainPDE} will be discussed in Section~\ref{subsec3} below. 

Given a function $f:\mathbb{R}^{n}\rightarrow\mathbb{R}$, we consider its nonlocal tail defined by
\begin{equation}\label{def.tail}
\tail(f;x_{0},r) \coloneqq \left(\frac{r^{p}}{\Phi(r)}\int_{\mathbb{R}^{n}\setminus B_{r}(x_{0})}|f(y)|^{p-1}\frac{\phi(|y-x_{0}|)}{|y-x_{0}|^{n+p}}\,dy\right)^{1/(p-1)} 
\end{equation}
whenever $x_{0}\in\mathbb{R}^{n}$ and $r>0$, where $\Phi$ is given in \eqref{def.Phi}. If there is no confusion, we will omit the point $x_{0}$ and simply write $\tail(f;x_{0},r) \equiv \tail(f;r)$. 
It can be shown that every function in $\mathbb{W}^{\phi,p}(\Omega)$ has finite tails, see Lemma~\ref{lem.tail.fin} below. 

We now state our main results.

\begin{theorem}[Local boundedness]\label{thm.bdd}
Let $u \in \mathbb{W}^{\phi,p}(\Omega)$ be a weak subsolution to \eqref{mainPDE} under assumptions \eqref{kernel}, \eqref{Dini}, \eqref{adec} and \eqref{ainc}. Then we have
\begin{equation}\label{sup.est}
\sup_{B_{r/2}(x_{0})}u \le c_{b}\varepsilon^{-\frac{n(p-1)}{s p^2}}\left(\mean{B_{r}(x_{0})}u_{+}^{p}\,dx\right)^{1/p} + \varepsilon\tail(u_{+};x_{0},r/2)
\end{equation}
whenever $B_{r}(x_{0}) \subset \Omega$ is a ball and $\varepsilon \in (0,1]$, where $c_{b}= c_{b}(n,p,s,\tilde{s},\Lambda,L) \ge 1$ is a constant. Consequently, if $u$ is a weak solution to \eqref{mainPDE}, then $u \in L^{\infty}_{\loc}(\Omega)$ with the estimate
\begin{equation}\label{sup.est2}
\|u\|_{L^{\infty}(B_{r/2}(x_{0}))} \le c_{b}\varepsilon^{-\frac{n(p-1)}{s p^2}}\left(\mean{B_{r}(x_{0})}|u|^{p}\,dx\right)^{1/p} + \varepsilon\tail(u;x_{0},r/2).
\end{equation}
\end{theorem}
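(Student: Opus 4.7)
The plan is to run a De Giorgi--type iteration on super-level sets, in the spirit of \cite{DKP16,Coz17} adapted to the $\phi$-structure of \eqref{kernel}. The engine is a Caccioppoli inequality for the truncations $w_+ \coloneqq (u-k)_+$, combined with the sharp Sobolev--Poincar\'e inequality of Corollary~\ref{cor.SP}, which upgrades the $\phi$-fractional Gagliardo seminorm of $w_+$ to $L^{p\kappa}$ with $\kappa = n/(n-sp) > 1$; note that we may assume $sp < n$ by \rmkref{rmk:adec}. The tail-dependent factor $\varepsilon$ in \eqref{sup.est} will emerge by balancing, at each step of the iteration, the annular contribution from $B_R \setminus B_\rho$ against the exterior contribution outside $B_{r/2}$.

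First I would establish the Caccioppoli estimate on concentric balls $B_\rho \subset B_R \subset \Omega$. Choosing a cutoff $\eta \in C^\infty_c(B_{(R+\rho)/2})$ with $\eta \equiv 1$ on $B_\rho$ and $|\nabla \eta| \le C/(R-\rho)$, testing the subsolution inequality against $\zeta = \eta^p w_+$, and handling the double integral via the algebraic inequalities of \cite{DKP14} together with the decisive bound $\phi(|x-y|) \le L\,\Phi(R)\,[|x-y|/(R-\rho)]^p$ derived from \eqref{adecequiv} and \eqref{phi.Phi}, should produce the fractional Caccioppoli estimate
\[
\iint_{B_\rho \times B_\rho} |w_+(x) - w_+(y)|^p \, \frac{\phi(|x-y|)}{|x-y|^{n+p}} \, dx \, dy \le \frac{C\,\Phi(R)}{(R-\rho)^p} \int_{B_R} w_+^p \, dx + C\,\tail(w_+; x_0, R/2)^{p-1} \int_{B_R} w_+ \, dx.
\]
Dividing by $\Phi(R)$, applying Corollary~\ref{cor.SP} on $B_\rho$, and using \eqref{phi.Phi} to pass between $\phi$ and $\Phi$, then yields the reverse-H\"older--type gain
\[
\Bigl( \mean{B_\rho} w_+^{p\kappa} \, dx \Bigr)^{1/\kappa} \le C \Bigl( \tfrac{R}{R-\rho} \Bigr)^{pq} \mean{B_R} w_+^p \, dx + C \, \tail(w_+; x_0, R/2)^{p-1} \mean{B_R} w_+ \, dx
\]
for some exponent $q > 0$ that is $s$-independent.

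Finally, I would run the classical geometric iteration. Set $r_j \coloneqq \tfrac{r}{2}(1 + 2^{-j})$, $B_j \coloneqq B_{r_j}(x_0)$, $k_j \coloneqq k_\infty(1 - 2^{-j})$ for a level $k_\infty$ to be determined, and $A_j \coloneqq \mean{B_j} (u - k_j)_+^p \, dx$. Inserting the reverse-H\"older estimate together with Chebyshev's inequality $|\{u > k_{j+1}\} \cap B_{j+1}| \le 2^{(j+1)p}\,A_j / k_\infty^p \cdot |B_{j+1}|$ should produce a recursion of the form $A_{j+1} \le C\,b^{\,j}\,k_\infty^{-p\beta}\,A_j^{1+\beta}$ with $\beta = (\kappa - 1)/\kappa = sp/n$; a standard geometric-iteration lemma then gives $A_j \to 0$ as soon as $k_\infty$ is large enough. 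Choosing
\[
k_\infty = c_b\,\varepsilon^{-n(p-1)/(sp^2)} \Bigl( \mean{B_r} u_+^p \, dx \Bigr)^{1/p} + \varepsilon\,\tail(u_+; x_0, r/2),
\]
so that the annular piece of the tail at each stage is absorbed into the $L^p$-mean while the exterior piece is absorbed into $\varepsilon\,\tail(u_+; x_0, r/2)$, produces \eqref{sup.est}; then \eqref{sup.est2} follows by applying the estimate to $\pm u$. The main obstacle is the tail bookkeeping: since $\tail(w_+; x_0, R/2)$ cannot be directly dominated by $\tail(u_+; x_0, r/2)$, one has to decompose the annulus $B_R \setminus B_{r/2}$ from the far field at every scale, redistribute $\varepsilon$ through the geometric series, and track the exact $s$-dependence of all constants (the sharp exponent $n(p-1)/(sp^2)$ arises from $1/(p\beta) = n/(sp^2)$ multiplied by an extra factor $p-1$ that enters when trading $\tail^{p-1}$ against a power of $k_\infty$), so that the estimate is both sharp and stable in the limit $s \to 1$.
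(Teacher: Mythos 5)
Your framework --- De Giorgi iteration driven by a Caccioppoli estimate for $(u-k)_+$, the Sobolev--Poincar\'e gain of Theorem~\ref{thm:SoboPoin} after reducing to $sp<n$ via Remark~\ref{rmk:adec}, and a geometric iteration --- is exactly the paper's, and you correctly trace the exponent $n(p-1)/(sp^2)=\kappa(p-1)/(p(\kappa-1))$ to the interaction of the $\tail^{p-1}$ factor with $1/(p\beta)$. However there are concrete errors in the details. Your stated Caccioppoli inequality is dimensionally inconsistent: comparing with Lemma~\ref{lem.ccp}, the tail contribution must carry the prefactor $\bigl(\tfrac{R}{R-\rho}\bigr)^{n+p}\tfrac{\Phi(R)}{R^{p}}$; as written, your tail term has dimensions $[u]^{p}L^{n}$ while the left side has dimensions $[u]^{p}\,\phi\,L^{n-p}$, and the missing ratio $(R/(R-\rho))^{n+p}$ is what controls the kernel blow-up as the cutoff support approaches $\partial B_{r/2}$. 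Moreover, the ``decisive bound'' $\phi(|x-y|)\le L\,\Phi(R)[|x-y|/(R-\rho)]^{p}$ you invoke is false for small $|x-y|$: with $\phi(t)=t^{(1-s)p}$, the left side scales like $|x-y|^{(1-s)p}$ while the right scales like $|x-y|^{p}$, and $(1-s)p<p$. The cutoff term is handled in the standard way, via $|\eta(x)-\eta(y)|^{p}\lesssim(R-\rho)^{-p}|x-y|^{p}$ and then integrating $\phi(|x-y|)/|x-y|^{n}$ over $B_{2R}$ to produce $\Phi(2R)\lesssim\Phi(R)$; no pointwise comparison of $\phi$ to a fixed power is needed.

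The obstacle you single out at the end --- that $\tail(w_{+};x_{0},R/2)$ ``cannot be directly dominated'' by $\tail(u_{+};x_{0},r/2)$, forcing an annulus/far-field decomposition at every scale with $\varepsilon$ redistributed over a geometric series --- does not in fact arise. All outer radii in your own iteration lie in $[r/2,r]$; since $(u-k)_{+}\le u_{+}$ for $k\ge 0$ and $t\mapsto t^{p}/\Phi(t)$ is almost increasing (a consequence of \eqref{adec} applied to $\Phi$), one gets $\tail((u-k)_{+};\sigma)\le c\,\tail(u_{+};r/2)$ uniformly over $\sigma\in[r/2,r]$. The $\varepsilon$-dependence is then injected once and for all: choosing $2k_{0}\ge\varepsilon\tail(u_{+};r/2)$ as in \eqref{choosek01} makes the factor $\tail(u_{+};r/2)^{p-1}/(k_{i+1}-k_{i})^{p-1}$ in the recursion bounded by $c\,\varepsilon^{1-p}$, which is precisely what Lemma~\ref{lemseq} digests. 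So the tail bookkeeping is substantially simpler than you anticipate and is handled in a single uniform bound, not a multi-scale decomposition.
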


\begin{theorem}[H\"older regularity]\label{thm.hol}
Let $u \in \mathbb{W}^{\phi,p}(\Omega)$ be a weak solution to \eqref{mainPDE} under assumptions \eqref{kernel}, \eqref{Dini}, \eqref{adec} and \eqref{ainc}. Then $u \in C^{0,\alpha}_{\loc}(\Omega)$ for some $\alpha = \alpha(n,p,s,\tilde{s},\Lambda,L) \in (0,1)$. Moreover, we have
\begin{equation}\label{holder.est}
\osc_{B_{\rho}(x_{0})}u \le c\left(\frac{\rho}{r}\right)^{\alpha}\left[\left(\mean{B_{r}(x_{0})}|u|^{p}\,dx\right)^{1/p} + \tail(u;x_{0},r/2)\right]
\end{equation}
whenever $B_{r}(x_{0})\subset \Omega$ is a ball and $\rho \in (0,r/2]$, where $c= c(n,p,s,\tilde{s},\Lambda,L) \ge 1$ is a constant. 
\end{theorem}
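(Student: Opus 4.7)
The plan is to adapt the De Giorgi iteration scheme employed in \cite{DKP14,DKP16,Coz17} for the fractional $p$-Laplacian, and derive $C^{0,\alpha}$ regularity via an oscillation decay estimate on a nested sequence of concentric balls. Fix a ball $B_r(x_0) \subset \Omega$ and set $r_j = \sigma^j (r/2)$ for some $\sigma \in (0,1/4)$ to be chosen (depending only on $n,p,s,\Lambda,L$). The goal is to construct monotone sequences $\{m_j\}$ and $\{M_j\}$ with $m_j \le \essinf_{B_{r_j}(x_0)} u$, $M_j \ge \esssup_{B_{r_j}(x_0)} u$, and $M_j - m_j \le K \sigma^{j\alpha}$, where $K$ is comparable to the bracketed expression on the right-hand side of \eqref{holder.est}. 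Interpolating this geometric decay between consecutive scales, and then repeating the argument at every base point, yields \eqref{holder.est}.

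The central step is a one-scale oscillation reduction: if the oscillation at scale $\rho$ is bounded by some $\omega$ and a compatible tail bound holds, then the oscillation at scale $\sigma\rho$ is bounded by $(1-\theta)\omega$ for some $\theta = \theta(n,p,s,\Lambda,L) \in (0,1)$. This reduction proceeds through the standard dichotomy: at least one of the level sets $\{u \ge (M_j+m_j)/2\}\cap B_{r_{j+1}}$ or $\{u \le (M_j+m_j)/2\}\cap B_{r_{j+1}}$ has measure at least $\tfrac{1}{2}|B_{r_{j+1}}|$. Whichever side is in the majority, one applies the expansion of positivity (built from the Caccioppoli and logarithmic estimates together with the sharp Sobolev-Poincaré inequality established earlier in the paper) to $u - m_j$ or to $M_j - u$, gaining a strictly positive pointwise improvement on a smaller concentric ball that cuts the oscillation by the factor $1-\theta$.

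To run the iteration one must also propagate a tail bound, since the nonlocal correction term appearing in the expansion of positivity sees $u$ outside $B_{r_{j+1}}$ through $\tail(u - c_j; x_0, r_{j+1})$ for some $c_j \in [m_j,M_j]$, and this quantity must in turn be controlled by a constant multiple of $\omega_j := K \sigma^{j\alpha}$. Splitting the tail into the annuli $B_{r_i}\setminus B_{r_{i+1}}$ for $i \le j$ and using the already established bound $\osc_{B_{r_i}} u \le \omega_i$, together with the normalization factor $r^p/\Phi(r)$ in \eqref{def.tail} and the almost-decreasing property \eqref{adec} applied to both $\phi$ and $\Phi$ via \eqref{adecequiv}, the tail reduces to a geometric series that sums to a constant multiple of $\omega_j$, provided the Hölder exponent $\alpha$ is chosen sufficiently small relative to the scaling exponents of $\phi$.

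The main obstacle is to ensure that the constants $\theta$, $\sigma$, $\alpha$ and the final constant $c$ depend only on $n,p,s,\Lambda,L$, and in particular do not degenerate as the nonlocal energy approaches the local $p$-Dirichlet energy or as $\phi$ approaches the critical Dini regime; this stability is exactly what is guaranteed by the sharp quantitative form of the Sobolev-Poincaré inequality proved earlier in the paper, propagated carefully through the Caccioppoli and logarithmic estimates. A secondary technical point is to convert the initial scale input from the oscillation-plus-tail form naturally produced by the iteration into the $L^p$-average form displayed in \eqref{holder.est}; this is achieved by combining the scheme above with the local boundedness estimate \eqref{sup.est2} of \thmref{thm.bdd} applied on $B_r(x_0)$, and by a standard covering argument to pass from Hölder continuity at each point to the interior estimate on $B_{r/2}(x_0)$.
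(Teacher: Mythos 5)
Your proposal is correct and follows essentially the same strategy as the paper's proof: a De Giorgi oscillation-decay iteration on dyadic balls $B_{\tau^j r/2}$, driven by a level-set dichotomy and the expansion-of-positivity lemma (\lemref{lem.positivity}), with the tail propagated by splitting into annuli and summing a geometric series (which, via \eqref{adec}, forces the constraint $\alpha \le sp/(2(p-1))$), and with the base case supplied by \thmref{thm.bdd}. The only cosmetic differences are that the paper uses the a priori bound $k_i$ rather than the true oscillation $M_i-m_i$ as the dichotomy threshold, applies the density condition on $2B_{i+1}$ to match the hypotheses of the expansion lemma, and reads off \eqref{holder.est} for arbitrary $\rho\in(0,r/2]$ directly from the geometric decay rather than via a covering argument.
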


\begin{theorem}[Nonlocal Harnack inequality]\label{thm.harnack}
Let $u \in \mathbb{W}^{\phi,p}(\Omega)$ be a weak solution to \eqref{mainPDE} under assumptions \eqref{kernel}, \eqref{Dini}, \eqref{adec} and \eqref{ainc}, which is nonnegative in a ball $B_{R}(x_{0}) \subset \Omega$. Then for any $r \in (0,R/2]$, we have
\begin{equation}\label{harnack.est}
\sup_{B_{r}(x_{0})}u \le c\inf_{B_{r}(x_{0})}u + c\left(\frac{r^{p}}{\Phi(r)}\frac{\Phi(R)}{R^{p}}\right)^{1/(p-1)}\tail(u_{-};x_{0},R),
\end{equation}
where $c = c(n,p,s,\tilde{s},\Lambda,L) \ge 1$ is a constant.
\end{theorem}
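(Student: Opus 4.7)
My plan is to derive \eqref{harnack.est} by combining the local supremum estimate of \thmref{thm.bdd} with a weak Harnack type lower bound on $u$, following the De Giorgi--Nash--Moser strategy of Di Castro--Kuusi--Palatucci adapted to the generalized kernel through the function $\phi$ (and its primitive $\Phi$ defined in \eqref{def.Phi}).

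The first main ingredient is a logarithmic Caccioppoli estimate. Given $u \ge 0$ in $B_R(x_0)$, $r \le R/2$, and $d > 0$, I would test the weak formulation of \eqref{mainPDE} with $\zeta = \eta^{p}(u+d)^{1-p}$ for a cutoff $\eta$ supported in $B_r(x_0)$. Using the standard pointwise inequality relating $|a-b|^{p-2}(a-b)\bigl((b+d)^{1-p}-(a+d)^{1-p}\bigr)$ to $|\log\tfrac{a+d}{b+d}|^{p}$, and tracking the $\phi$-dependence from \eqref{kernel}, \eqref{adec}, \eqref{phi.Phi}, this should yield
\begin{equation*}
\iint_{B_r \times B_r} \Bigl|\log \tfrac{u(x)+d}{u(y)+d}\Bigr|^{p} \tfrac{\phi(|x-y|)}{|x-y|^{n+p}}\,dx\,dy \le c\, \tfrac{\Phi(r)}{r^{p}} |B_r|\Bigl(1 + d^{1-p}\,\tfrac{r^{p}}{\Phi(r)}\,\tfrac{\Phi(R)}{R^{p}}\,\tail(u_-;x_0,R)^{p-1}\Bigr),
\end{equation*}
in which the factor $\Phi(r)/r^{p}$ naturally replaces the $r^{-sp}$ of the fractional $p$-Laplace theory, while the tail conversion factor $(r^{p}/\Phi(r))(\Phi(R)/R^{p})$ arises from splitting the long-range interaction into $B_R \setminus B_r$ and $\mathbb{R}^n \setminus B_R$ and using \eqref{def.tail} at the two scales.

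Using this logarithmic estimate together with the sharp Sobolev--Poincar\'e inequality (\thmref{thm:SoboPoin}/\corref{cor.SP}) and a De Giorgi iteration on the super-level sets $\{-\log(u+d) > \lambda\}$ in the spirit of \cite{DKP16}, I expect to obtain a weak Harnack inequality
\begin{equation*}
\Bigl(\mean{B_{r}(x_0)} u^{\varepsilon_0}\,dx\Bigr)^{1/\varepsilon_0} \le c \inf_{B_{r}(x_0)} u + c \Bigl(\tfrac{r^{p}}{\Phi(r)}\tfrac{\Phi(R)}{R^{p}}\Bigr)^{1/(p-1)} \tail(u_-;x_0,R)
\end{equation*}
for some $\varepsilon_0 = \varepsilon_0(n,p,s,\Lambda,L) \in (0,p)$. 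Applying \thmref{thm.bdd} to $u$ on $B_{2r}\subset B_R$ with a sufficiently small $\varepsilon$, and using the interpolation
\begin{equation*}
\mean{B_{2r}} u^{p}\,dx \le \Bigl(\sup_{B_{2r}} u\Bigr)^{p-\varepsilon_0} \mean{B_{2r}} u^{\varepsilon_0}\,dx
\end{equation*}
to absorb the supremum on the right-hand side back into the left-hand side of \eqref{sup.est2}, then produces \eqref{harnack.est}.

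The main obstacle is the weak Harnack step. For the $s$-fractional $p$-Laplacian the scaling $r^{-sp}$ is a pure power and the argument is essentially scale-invariant, whereas here the analogue $\Phi(r)/r^{p}$ is only quasi-monotone in $r$. Every Caccioppoli, logarithmic and expansion-of-positivity estimate must be carried out with constants depending only on $n,p,s,\Lambda,L$ (and not on $\phi$ itself), and with the correct $\Phi$-dependent scaling, so as to reproduce exactly the tail conversion factor appearing in \eqref{harnack.est}. This is precisely where the sharp $\phi$-Sobolev--Poincar\'e inequality of \thmref{thm:SoboPoin} is essential, and where one must ensure that all constants remain stable as $s \to 1^{-}$, in line with \rmkref{rmk.conv}.
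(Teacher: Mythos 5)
Your overall architecture is correct and matches the paper's strategy: establish a weak Harnack inequality with a small exponent $p_0$ via the logarithmic estimate and Sobolev--Poincar\'e inequality, then use \thmref{thm.bdd} and interpolation to pass from the $L^p$-average to the $L^{p_0}$-average and absorb the supremum. But there is a genuine gap in the absorption step, and it is the most delicate part of the whole argument.

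After you apply \thmref{thm.bdd} at some intermediate scale, the right-hand side contains the term $\varepsilon\,\tail(u_{+};x_{0},\sigma r)$. You interpolate only the $L^p$-average, but you never say what happens to this tail term. The target inequality \eqref{harnack.est} has $\tail(u_{-};x_{0},R)$ on the right-hand side, which lives at the larger scale $R$ and involves the \emph{negative} part of $u$, not $u_{+}$ at the smaller scale. Passing from one to the other is not a bookkeeping step: you need a genuine \emph{tail estimate} that uses the equation once more to show
\begin{equation*}
\tail(u_{+};x_0,r) \le c\,\sup_{B_{r}(x_0)}u + c\left(\frac{r^{p}}{\Phi(r)}\frac{\Phi(R)}{R^{p}}\right)^{1/(p-1)}\tail(u_{-};x_0,R).
\end{equation*}
This is the content of the paper's Lemma~\ref{lem.tail}, proved by testing \eqref{mainPDE} with $(u-2k)\eta^p$ where $k=\sup_{B_r}u$, carefully splitting the long-range interaction over $B_R\setminus B_r$ and $\mathbb{R}^n\setminus B_R$, and using \eqref{out.int} together with the almost-decreasing property of $\phi(t)/t^{n+p}$. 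Your proposal never mentions it, yet without it the term $\varepsilon\,\tail(u_{+};x_0,\sigma r)$ cannot be converted into the sup plus $\tail(u_-;R)$, so the absorption cannot even be set up.

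A secondary point: even granting the tail estimate, the absorption requires a version of \eqref{sup.est} with two free radii $\sigma_1 r < \sigma_2 r$ (so that $\sup_{B_{\sigma_1 r}}$ appears on the left while $\sup_{B_{\sigma_2 r}}$ appears on the right after interpolation), followed by the standard iteration Lemma~\ref{tech.lemma} to eliminate the sup on the larger ball. Your statement \emph{"absorb the supremum on the right-hand side back into the left-hand side of \eqref{sup.est2}"} glosses over the fact that the two sups in \eqref{sup.est2} sit on different balls and cannot be absorbed directly; you must introduce the varying-radii formulation and invoke the iteration lemma. This part is more routine, but should be made explicit.
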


\begin{theorem}[Nonlocal weak Harnack inequality]\label{thm.wh}
Let $u \in \mathbb{W}^{\phi,p}(\Omega)$ be a weak supersolution to \eqref{mainPDE} under assumptions \eqref{kernel},  \eqref{Dini}, \eqref{adec} and \eqref{ainc}, which is nonnegative in a ball $B_{R}(x_{0}) \subset \Omega$. Let 
\begin{equation*}
\bar{t} \coloneqq 
\begin{cases}
\frac{n(p-1)}{n-s p} & \text{if }\ s p <n, \\
\infty & \text{if }\ s p \ge n.
\end{cases}
\end{equation*}
Then for any $r \in (0,R/2]$ and $t \in (0,\bar{t})$, we have
\begin{equation}\label{weak.harnack.est}
\left(\mean{B_{r/2}(x_{0})}u^{t}\,dx\right)^{1/t} \le c\inf_{B_{r}(x_{0})}u + c\left(\frac{r^{p}}{\Phi(r)}\frac{\Phi(R)}{R^{p}}\right)^{1/(p-1)}\tail(u_{-};x_{0},R),
\end{equation}
where $c= c(n,p,s,\tilde{s},\Lambda,L,t) \ge 1$ is a constant.
\end{theorem}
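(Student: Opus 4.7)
The plan is to follow the classical Moser iteration adapted to the generalized nonlocal setting, using as inputs the Caccioppoli and logarithmic estimates for nonnegative supersolutions (established earlier in the paper) together with the sharp Sobolev--Poincar\'e inequality of Theorem~\ref{thm:SoboPoin}. First I would reduce to the case of a strictly positive function by shifting: set
\[ d := \left(\tfrac{r^{p}}{\Phi(r)}\tfrac{\Phi(R)}{R^{p}}\right)^{1/(p-1)} \tail(u_{-}; x_{0}, R), \qquad \bar u := u+d, \]
so that $\bar u$ is a strictly positive supersolution of a perturbed equation in which the negative part of the nonlocal tail is absorbed by $d$ via \eqref{adec}. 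It then suffices to prove $(\mean{B_{r/2}(x_0)} \bar u^{t}\,dx)^{1/t} \le c\,\inf_{B_{r}(x_0)} \bar u$.

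Next, I would combine the logarithmic estimate with a negative-exponent Moser iteration to establish the weak Harnack inequality at a single small exponent $t_{0}$. The logarithmic estimate, obtained by testing \eqref{mainPDE} with $\eta^{p}/\bar u^{p-1}$ as in \cite{DKP14}, yields $\int_{B_{\rho}} |\log \bar u - (\log \bar u)_{B_{\rho}}|\,dx \le c\rho^{n}$ for concentric balls $B_{\rho} \subset B_{R/2}$; a Bombieri--Giusti / John--Nirenberg argument then produces $t_{0} = t_{0}(n,p,s,\Lambda,L) \in (0,\bar t)$ with $(\mean{B_{\rho}} \bar u^{t_{0}}\,dx)^{1/t_{0}} \le C (\mean{B_{\rho}} \bar u^{-t_{0}}\,dx)^{-1/t_{0}}$. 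On the other side, the Caccioppoli inequality tested with $\eta^{p}\bar u^{q-(p-1)}$ for $q<0$ and followed by Theorem~\ref{thm:SoboPoin} gives the reverse iteration
\[ \bigl(\mean{B_{\rho_{1}}} \bar u^{q\kappa}\,dx\bigr)^{1/(q\kappa)} \ge c\, \bigl(\mean{B_{\rho_{2}}} \bar u^{q}\,dx\bigr)^{1/q}, \qquad \kappa := \tfrac{n}{n-sp}>1, \]
on concentric balls $B_{\rho_{1}} \subset B_{\rho_{2}} \subset B_{r}$. Telescoping with $q_{j}=-t_{0}\kappa^{j} \to -\infty$ and a geometric sequence of radii converging to $r/2$ yields $\inf_{B_{r/2}} \bar u \ge c (\mean{B_{r/2}} \bar u^{-t_{0}}\,dx)^{-1/t_{0}}$, which combined with the logarithmic bound gives the weak Harnack inequality at $t=t_{0}$.

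The remaining task is to upgrade the exponent. For $t \le t_{0}$, H\"older's inequality suffices. For $t_{0} < t < \bar t$, I would iterate the Caccioppoli inequality with positive exponents $q \in (0,p-1)$ together with Theorem~\ref{thm:SoboPoin}, obtaining
\[ \bigl(\mean{B_{\rho_{1}}} \bar u^{q\kappa}\,dx\bigr)^{1/(q\kappa)} \le c\, \bigl(\mean{B_{\rho_{2}}} \bar u^{q}\,dx\bigr)^{1/q}, \]
starting from $q_{0}=t_{0}$ and iterating a finite number of times (depending on $t$) until $q_{j}\kappa \ge t$. The breakdown of this iteration precisely at $(p-1)\kappa = n(p-1)/(n-sp) = \bar t$ accounts for the sharp threshold appearing in the statement, and the constant is allowed to blow up as $t \to \bar t^{-}$.

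The principal technical difficulty is quantitative: every constant arising in Caccioppoli, the logarithmic estimate, and Theorem~\ref{thm:SoboPoin} must depend only on $n,p,s,\Lambda,L$ (and, in the last step, on $t$) and not on $\phi$ itself, so that the telescoping series converge and the final estimate remains stable as $s \to 1^{-}$ in the sense of Remark~\ref{rmk.conv}. A second delicate point is the propagation of nonlocal tails: each application of Caccioppoli generates a tail contribution which must be absorbed into $d$, and this absorption relies on the almost-monotonicity of $\rho \mapsto \Phi(\rho)/\rho^{p}$ from \eqref{adec} to guarantee comparability of $\Phi(\rho_{j})/\rho_{j}^{p}$ across the dyadic sequence of radii. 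Keeping both the constants and the tail contributions under uniform control throughout the two Moser iterations is the main bookkeeping challenge.
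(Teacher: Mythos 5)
Your proposal follows essentially the same route as the paper: shift by $d$ equal to a scaled tail so that $\bar u = u+d$ is a positive supersolution, establish a weak Harnack inequality at a fixed small exponent, and then upgrade to any $t<\bar t$ by a finite positive-exponent Moser iteration combining the Caccioppoli estimate for $\bar u^{(p-q)/p}$, $q\in(1,p)$, with Theorem~\ref{thm:SoboPoin}, absorbing the tail contributions into $d$. The only difference is presentational: the paper packages the small-exponent step as Lemma~\ref{lem.wh} and omits its proof (citing \cite{BDOR,Coz17,DKP14}), whereas you explicitly sketch that proof via the logarithmic lemma, a Bombieri--Giusti/John--Nirenberg crossing argument, and a negative-exponent Moser iteration, which is precisely the argument those references use.
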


\begin{remark}\label{rmk.conv}
We highlight that, by carefully analyzing the factors related to $\phi$ in several estimates, we are able to show the robustness of our results. 
More precisely, the estimates given in Theorems~\ref{thm.bdd}--\ref{thm.wh} are stable as $s\nearrow 1$ and recover the estimates available for the local $p$-Laplacian in the limit case. 
Indeed, if $0<s_{0}<s<1$, then Remark~\ref{rmk:adec} implies that $\phi$ satisfies \eqref{adec} with $s$ replaced by $s_{0}$, which allows us to obtain each of \eqref{sup.est}--\eqref{weak.harnack.est} with the constant $c$ depending on $s_{0}$ instead of $s$. 
Also note that if $f \in L^{q}(\mathbb{R}^{n})$ for some $q \ge p-1$ and $r \in (0,\infty)$, then \eqref{adec} and \eqref{phi.Phi} imply
\begin{equation*}
\begin{aligned}
[\tail(f;x_{0},r)]^{p-1} & = \frac{r^{p}}{\Phi(r)}\int_{\mathbb{R}^{n}\setminus B_{r}(x_{0})}|f(y)|^{p-1}\frac{\phi(|y-x_{0}|)}{|y-x_{0}|^{n+p}}\,dy \\
& \le L\frac{\phi(r)}{\Phi(r)}r^{s p}\int_{\mathbb{R}^{n}\setminus B_{r}(x_{0})}\frac{|f(y)|^{p-1}}{|y-x_{0}|^{n+s p}}\,dy \\
& \le L^{2}(1-s)p\left(1+\frac{1+|x_{0}|}{r}\right)^{s p}r^{s p}\int_{\mathbb{R}^{n}}\frac{|f(y)|^{p-1}}{(1+|y|)^{n+s p}}\,dy \; \longrightarrow \; 0 \quad \text{as }\ s\nearrow1.
\end{aligned}
\end{equation*}
\end{remark}

\begin{remark}
We give a few comments on possible extensions of our results.
\begin{itemize}
\item[(i)]
The symmetry condition on the kernel $K$ is not really restrictive, since otherwise we may consider $\tilde{K}(x,y) \coloneqq \frac{1}{2}(K(x,y)+K(y,x))$. 

\item[(ii)]
Moreover, similar results on local regularity and Harnack inequality can be obtained under the following weaker assumption on $K$: there exists $r_{0}>0$ such that
\begin{equation*} 
 \Lambda^{-1}\frac{\phi(|x-y|)}{ |x-y|^{n+p}} \le K(x,y) \le \Lambda  \frac{\phi(|x-y|)}{|x-y|^{n+p}} \quad \text{for a.e. } x,y\in\mathbb{R}^{n} \text{ with } |x-y|<r_{0}
\end{equation*}
and 
\begin{equation*}
\sup_{x\in\mathbb{R}^{n}}\int_{\{|y-x|\ge r_{0}\}}K(x,y)\,dy + \sup_{\substack{x,y\in\mathbb{R}^{n} \\ |x-y| \ge r_{0}}}K(x,y) < \infty,
\end{equation*}
see for instance \cite{Bae15,Kas09,Kas11}. However, in order to obtain precise tail terms and make our estimates stable as $s \nearrow 1$, we will assume \eqref{kernel} in this paper.

\item[(iii)]
It is possible to extend Theorems~\ref{thm.bdd}--\ref{thm.wh} to inhomogeneous equations of the form
\[  \mathrm{P.V.} \int_{\mathbb{R}^{n}} |u(x)-u(y)|^{p-2}(u(x)-u(y))K(x,y)\, dy = f(x,u) \quad  \text{in } \Omega, \]
under a suitable growth condition on $f$. 
In this case, additional terms responsible for $f$ will appear in estimates \eqref{sup.est}--\eqref{weak.harnack.est}, but the proofs are almost the same as in the case of \eqref{mainPDE}; see for instance \cite{Coz17}.
\end{itemize}
\end{remark}

The paper is organized as follows. In Section~\ref{sec2}, we introduce notations and basic properties of function spaces, and then prove the existence of weak solutions to \eqref{mainPDE}. In Section~\ref{sec3}, we prove Sobolev-Poincar\'e type inequalities for the function space $W^{\phi,p}$. In Section~\ref{sec4}, we derive Caccioppoli and logarithmic estimates and then prove Theorem~\ref{thm.bdd}. Finally, in Section~\ref{sec5}, we show an expansion of positivity lemma to prove Theorems~\ref{thm.hol}--\ref{thm.wh}.

\section{Preliminaries}\label{sec2}
\subsection{Notation}
Throughout this paper, we denote by $c$ a general positive constant, whose value may vary from line to line. Specific dependencies of constants are denoted by using parentheses.
As usual, 
\[ B_{r}(x_{0}) \coloneqq \left\{ x\in\mathbb{R}^{n} : |x-x_{0}| < r \right\} \]
denotes the $n$-dimensional open ball with center $x_{0} \in \mathbb{R}^{n}$ and radius $r>0$. 
If there is no confusion, we omit the center and simply write $B_{r} \equiv B_{r}(x_{0})$. 
Moreover, given a ball $B$, we denote by $\gamma B$ the concentric ball with radius magnified by a factor $\gamma>0$. 
Unless otherwise stated, different balls in the same context are concentric. 
We denote by $\omega_{n} \coloneqq 2\pi^{n/2}/\Gamma(n/2)$ the surface area of the $(n-1)$-dimensional unit sphere $\partial B_1$. 

For a measurable function $f$, we write $f_{\pm}\coloneqq \max\{\pm f,0\}$. If $f$ is integrable over a measurable set $U\subset \mathbb{R}^{n}$ with $0<|U|<\infty$, we denote its integral average over $U$ by
\begin{equation*}
(f)_{U} \coloneqq \mean{U}f\,dx  \coloneqq \frac{1}{|U|} \int_{U}f\,dx .
\end{equation*}

\subsection{A variant of fractional Sobolev spaces}\label{space}
Let $U\subseteq \mathbb{R}^{n}$ be an open set. For the function $\phi$ described in the previous section and a constant  $p\in (1,\infty)$,  we consider a fractional Sobolev space of general order defined as
\[ W^{\phi,p}(U) \coloneqq \left\{f\in L^{p}(U): [f]_{W^{\phi,p}(U)}^p \coloneqq \int_{U}\int_{U}\frac{|f(x)-f(y)|^{p}}{|x-y|^{p}}\frac{\phi(|x-y|)}{|x-y|^{n}}\,dy\,dx<\infty\right\}. \] 
Observe that $W^{\phi,p}(U)$ is a Banach space equipped with the norm
\[ \|f\|_{W^{\phi,p}(U)} \coloneqq \|f\|_{L^{p}(U)} + [f]_{W^{\phi,p}(U)}. \]
We also note that, in the particular case when $\phi(t)=t^{(1-s)p}$ for a constant $s \in (0,1)$, the space $W^{\phi,p}(U)$ reduces to the Sobolev-Slobodeckij space $W^{s,p}(U)$.

We introduce another function space related to weak solutions to \eqref{mainPDE}. We define $\mathbb W^{\phi,p}(\Omega)$ as the set of all measurable functions $f:\R^n \to \R$ satisfying 
\[
f|_{\Omega}\in L^{p}(\Omega) 
\quad \text{and}\quad
\iint_{\mathcal C_\Omega} \frac{|f(x)-f(y)|^{p}}{|x-y|^{p}}\frac{\phi(|x-y|)}{|x-y|^{n}} \,dy\, dx  < \infty,
\]
where $\mathcal{C}_{\Omega}$ is defined in \eqref{mcdef}. 
It is clear that $W^{\phi,p}(\mathbb{R}^{n}) \subset \mathbb{W}^{\phi,p}(\Omega)$ and that 
\[ f \in \mathbb{W}^{\phi,p}(\Omega) \;\; \Longrightarrow \;\; f|_{\Omega} \in W^{\phi,p}(\Omega). \]

We note that the conditions \eqref{adec} and \eqref{ainc} directly imply the following estimates of the integral of the kernel $\phi(|x-y|)/|x-y|^{n+p}$ over $\R^n\setminus B_r(x)$.  
\begin{lemma}
Assume that $\phi : [0,\infty)\to [0,\infty)$ satisfies \eqref{adec}, and let $B_{r}(x) \subset \mathbb{R}^n$ be a ball. 
Then
\begin{equation}\label{out.int} 
\int_{\mathbb{R}^{n}\setminus B_{r}(x)}\frac{\phi(|x-y|)}{|x-y|^{n+p}}\,dy \le   L\frac{\omega_{n}}{s p}\frac{\phi(r)}{r^{p}}. 
\end{equation}
\end{lemma}

We prove embedding results for $W^{1,p}(\Omega)$, $W^{\phi,p}(\Omega)$ or $\mathbb{W}^{\phi,p}(\Omega)$, $W^{s,p}(\Omega)$. 
\begin{proposition}\label{prop:embedding}
Let $p\in(1,\infty)$ and $\Omega$ be a bounded open set in $\R^n$ with $R \coloneqq \mathrm{diam}(\Omega)$. Assume that $\phi:[0,\infty)\rightarrow[0,\infty)$ is a measurable function. 
\begin{itemize}
\item[(i)] If $\phi$ satisfies \eqref{Dini} and  $\Omega$ is a $W^{1,p}$-extension domain, then $W^{1,p}(\Omega)$ is continuously embedded into  $W^{\phi,p}(\Omega)$. Moreover, for any ball $B_{r}\subset\mathbb{R}^{n}$ and $f \in W^{1,p}(B_r)$, we have
\[
\int_{B_r}\int_{B_r}\frac{|f(x)-f(y)|^{p}}{|x-y|^{p}}\frac{\phi(|x-y|)}{|x-y|^{n}}\,dx\,dy \le c (n,p) \Phi(2r) \int_{B_r} |Df|^p\, dx .
\]
\item[(ii)] If $\phi$ satisfies \eqref{Dini} and \eqref{adec}, then $W^{1,p}_0(\Omega)$ is continuously embedded into  $\mathbb{W}^{\phi,p}(\Omega)$. Moreover, for any $f\in W^{1,p}_{0}(\Omega)$, we have
\[
\iint_{\mathcal{C}_{\Omega}}\frac{|f(x)-f(y)|^{p}}{|x-y|^{p}}\frac{\phi(|x-y|)}{|x-y|^{n}}\,dx\,dy \le c (n,p,s) \left(\Phi(R) \int_\Omega |Df|^p\, dx + \frac{\phi(R)}{R^p} \int_\Omega |f|^p\,dx \right).
\]
\item[(iii)] If $\phi$ satisfies \eqref{adec} for some $s \in (0,1)$, then $W^{\phi,p}(\Omega)$ is continuously embedded into  $W^{s,p}(\Omega)$. Moreover, for any $f\in W^{\phi,p}(\Omega)$, we have
\[
\int_{\Omega}\int_{\Omega}\frac{|f(x)-f(y)|^{p}}{|x-y|^{s p+n}}\,dx\,dy \le L \frac{R^{(1-s)p}}{\phi(R)^{(1-s)p}}\int_{\Omega}\int_{\Omega}\frac{|f(x)-f(y)|^{p}}{|x-y|^{p}}\frac{\phi(|x-y|)}{|x-y|^{n}}\,dx\,dy  .
\]
\end{itemize} 
\end{proposition}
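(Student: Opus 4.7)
The plan is to treat the three embeddings in sequence, since (i) provides the core ball estimate that is recycled in (ii), while (iii) reduces to a pointwise comparison based on \eqref{adec}.

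For part (i), I would start from the fundamental theorem of calculus: by smooth approximation (or the ACL characterization of $W^{1,p}$) we may assume $f\in C^{1}(B_{r})$, and since the ball is convex the segment $[x,y]$ lies in $B_{r}$ for every $x,y\in B_{r}$. Jensen's inequality yields
\[
|f(x)-f(y)|^{p}\le |x-y|^{p}\int_{0}^{1}|Df(x+t(y-x))|^{p}\,dt.
\]
Substituting $h=y-x$ and applying Fubini gives
\[
\int_{B_{r}}\!\!\int_{B_{r}}\frac{|f(x)-f(y)|^{p}}{|x-y|^{p}}\frac{\phi(|x-y|)}{|x-y|^{n}}\,dy\,dx \le \int_{0}^{1}\!\!\int_{B_{2r}}\frac{\phi(|h|)}{|h|^{n}}\int_{B_{r}\cap(B_{r}-h)}|Df(x+th)|^{p}\,dx\,dh\,dt.
\]
For fixed $t,h$ the change of variables $x'=x+th=(1-t)x+t(x+h)$ shows that the inner domain maps into $B_{r}$ by convexity, so the inner integral is bounded by $\int_{B_{r}}|Df|^{p}\,dx'$ uniformly in $t$ and $h$. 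Factoring this out and passing to polar coordinates gives $\int_{|h|<2r}\phi(|h|)|h|^{-n}\,dh=\omega_{n}\Phi(2r)$, yielding the ball estimate with constant $\omega_{n}$. The global embedding on a $W^{1,p}$-extension domain follows by extending $f$ and invoking the local estimate on a ball covering $\Omega$.

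Part (ii) I would reduce to part (i) via zero extension. Since $f\in W^{1,p}_{0}(\Omega)$, the extension $\tilde f$ by zero lies in $W^{1,p}(\mathbb{R}^{n})$ with $|D\tilde f|=|Df|\chi_{\Omega}$. Fixing $x_{0}\in\Omega$ so that $\Omega\subset B_{R}(x_{0})$, I decompose
\[
\iint_{\mathcal{C}_{\Omega}} F\,dx\,dy=\int_{\Omega}\!\!\int_{\Omega}F\,dx\,dy +2\int_{\Omega}\!\!\int_{B_{3R}(x_{0})\setminus\Omega}F\,dx\,dy+2\int_{\Omega}\!\!\int_{\mathbb{R}^{n}\setminus B_{3R}(x_{0})}F\,dx\,dy,
\]
where $F(x,y)\coloneqq|\tilde f(x)-\tilde f(y)|^{p}|x-y|^{-p-n}\phi(|x-y|)$. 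The first two pieces are both dominated by $\int_{B_{3R}(x_{0})}\!\int_{B_{3R}(x_{0})}F$, to which part (i) applies; the doubling estimate $\Phi(6R)\le c\,\Phi(R)$ from \eqref{adecequiv} turns this into $c\,\Phi(R)\|Df\|_{L^{p}(\Omega)}^{p}$. In the third piece $|x-y|\ge 2R$ forces $\tilde f(y)=0$, and the tail estimate \eqref{out.int} produces exactly the factor $\phi(R)/R^{p}$ that multiplies $\|f\|_{L^{p}(\Omega)}^{p}$.

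Part (iii) is a pointwise consequence of \eqref{adec}: for $x,y\in\Omega$ one has $|x-y|\le R$, and \eqref{adec} gives $\phi(|x-y|)/|x-y|^{(1-s)p}\ge L^{-1}\phi(R)/R^{(1-s)p}$, which rearranges to
\[
\frac{1}{|x-y|^{sp}}\le L\,\frac{R^{(1-s)p}}{\phi(R)}\cdot\frac{\phi(|x-y|)}{|x-y|^{p}}.
\]
Multiplying by $|f(x)-f(y)|^{p}|x-y|^{-n}$ and integrating over $\Omega\times\Omega$ finishes that part. The main obstacle I anticipate is in part (ii): the cross-terms with $y$ outside but close to $\Omega$ could in principle demand a Hardy-type boundary inequality (and hence regularity of $\partial\Omega$), but the zero-extension combined with part (i) applied on the enlarged ball $B_{3R}(x_{0})$ sidesteps this, the key point being that $B_{3R}(x_{0})\setminus\Omega$ is absorbed into the ball estimate while $\mathbb{R}^{n}\setminus B_{3R}(x_{0})$ stays at distance $\ge 2R$ from $\Omega$, so that \eqref{out.int} applies cleanly.
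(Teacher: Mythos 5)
Your proposal is correct and closely mirrors the paper's own proof: part (i) via Jensen's inequality along the segment plus a change of variables (the paper writes $z=tx+(1-t)y$, $\xi=x-y$ where you use $h=y-x$, $x'=x+th$, to the same effect), part (ii) by reduction to (i) on an enlarged ball combined with the tail bound \eqref{out.int} on the far region where $f$ vanishes by zero-extension (the paper's $B_{2r}$ with $\Omega\subset B_r$ versus your $B_{3R}(x_0)$ is cosmetic), and part (iii) by pointwise rearrangement of \eqref{adec}. One remark: your derivation of (iii) produces the constant $L\,R^{(1-s)p}/\phi(R)$, which is the correct one — it reduces to $L$ in the model case $\phi(t)=t^{(1-s)p}$, as consistency with $W^{\phi,p}=W^{s,p}$ requires — so the extra exponent $(1-s)p$ on $\phi(R)$ in the paper's stated inequality appears to be a typographical error.
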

\begin{proof}
(i) The embedding result is essentially well-known, see for instance \cite[Theorem~1]{BBM01}. Suppose that $f\in W^{1,p}(B_r) \cap C^{1}(\overline{B_{r}})$. Then, by Fubini's theorem and  the change of variables with $z=tx+(1-t)y$ and $\xi = x-y$ for each $t\in [0,1]$, we have
\[\begin{aligned}
\int_{B_r}\int_{B_r}\frac{|f(x)-f(y)|^{p}}{|x-y|^{p}}\frac{\phi(|x-y|)}{|x-y|^{n}}\,dx\,dy 
&\le \int_0^1 \int_{B_r}\int_{B_r}|Df(tx+(1-t)y)|^{p} \frac{\phi(|x-y|)}{|x-y|^{n}}\,dx\,dy\,dt \\
&\le \int_0^1 \int_{B_{2r}}\int_{B_r}  |Df(z)|^{p} \frac{\phi(|\xi|)}{|\xi|^{n}} t^n(1-t)^n \,dz\,d\xi \,dt\\ 
& \le c(n,p)  \Phi(2r) \int_{B_r} |Df(z)|^p\,dz.
\end{aligned}\]

(ii) Let $B_{r} \subset \mathbb{R}^{n}$ be a ball satisfying $\Omega\subset B_r$, and suppose that $f\in W^{1,p}_0(\Omega)\subset W^{1,p}_0(B_r)$. By \eqref{out.int}, we have
\[\begin{aligned}
\int_{\R^n\setminus B_{2r}}\int_{B_{2r}}\frac{|f(x)-f(y)|^{p}}{|x-y|^{p}}\frac{\phi(|x-y|)}{|x-y|^{n}}\,dx\,dy 
&\le \int_{B_r}|f(x)|^p\left[\int_{\R^n\setminus B_{r}(x)}\frac{\phi(|x-y|)}{|x-y|^{n+p}}\,dy\right]\,dx   \\
& \le c(n,p,s)  \frac{\phi(r)}{r^p} \int_{B_r} |f(x)|^p\,dx.
\end{aligned}\]
Then, the desired estimate in (ii) follows from the one in (i) and the previous inequality.

 The estimate in (iii) directly follows from \eqref{adec}. This completes the proof. 
\end{proof}

The following proposition is concerned with the limit of our energy as $s \nearrow 1$.

\begin{proposition}\label{prop.energy}
For $0<s<1<\tilde{s}$, let $\phi_{s}:[0,\infty)\to[0,\infty)$ be any measurable function satisfying \eqref{Dini}, \eqref{adec} and \eqref{ainc}, and define 
\[ \Phi_{s}(t) \coloneqq \int_{0}^{t}\phi_{s}(\tau)\,\frac{d\tau}{\tau}, \qquad t \in (0,\infty). \]
If $f\in L^{p}(\mathbb{R}^{n})$, then
\[ \frac{1}{\Phi_{s}(r)}\int_{\mathbb{R}^{n}}\int_{\mathbb{R}^{n}}\frac{|f(x)-f(y)|^{p}}{|x-y|^{p}}\frac{\phi_{s}(|x-y|)}{|x-y|^{n}} \,dy\,dx\; \longrightarrow \; c(n,p)\int_{\mathbb{R}^{n}}|Df|^{p}\,dx \quad \text{as }\ s\nearrow1 \]
for any $r >0$, with the understanding that the limit is equal to $\infty$ if $f\notin W^{1,p}(\mathbb{R}^{n})$.	
\end{proposition}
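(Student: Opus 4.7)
The plan is to reexpress the left-hand side as a Bourgain--Brezis--Mironescu (BBM) convolution with a radial probability density that concentrates at the origin as $s\nearrow 1$, and then invoke the BBM characterization of $W^{1,p}(\mathbb{R}^{n})$. After passing to the variable $h=y-x$, I introduce
\[
\rho_{s}(h) \coloneqq \frac{1}{\omega_{n}\,\Phi_{s}(r)}\,\frac{\phi_{s}(|h|)}{|h|^{n}}\,\chi_{B_{r}}(h),
\]
which is a nonnegative radial function with $\int_{\mathbb{R}^{n}}\rho_{s}\,dh = 1$ by polar coordinates and the very definition of $\Phi_{s}(r)$. Splitting the double integral at $|x-y|=r$, the inner part equals $\omega_{n}\iint|f(x+h)-f(x)|^{p}|h|^{-p}\rho_{s}(h)\,dh\,dx$. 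For the outer part, using $|f(x)-f(y)|^{p}\le 2^{p-1}(|f(x)|^{p}+|f(y)|^{p})$, Fubini, and \eqref{out.int}, I would obtain a bound of the form $C\|f\|_{L^{p}}^{p}\phi_{s}(r)/r^{p}$ with $C=C(n,p,L)$; dividing by $\Phi_{s}(r)$ and using the sharpened form $\phi_{s}(r)/\Phi_{s}(r)\le L(1-s)p$ of \eqref{phi.Phi}, the outer contribution is $O(1-s)$ and hence vanishes.

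The core step is the verification of concentration of $\rho_{s}$: for every fixed $\delta\in(0,r)$ one must show
\[
\int_{\{|h|>\delta\}}\rho_{s}(h)\,dh = 1 - \frac{\Phi_{s}(\delta)}{\Phi_{s}(r)} \longrightarrow 0 \quad \text{as }\ s\nearrow 1.
\]
Applying \eqref{adec} on $[\delta,r]$ gives $\phi_{s}(t)\le L\phi_{s}(\delta)(t/\delta)^{(1-s)p}$, and integrating over $t\in[\delta,r]$ yields
\[
\Phi_{s}(r)-\Phi_{s}(\delta) \le \frac{L\phi_{s}(\delta)}{(1-s)p}\bigl((r/\delta)^{(1-s)p}-1\bigr),
\]
while the sharp form of \eqref{phi.Phi} applied at $\delta$ supplies the matching lower bound $\Phi_{s}(\delta)\ge \phi_{s}(\delta)/(L(1-s)p)$. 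Taking the quotient,
\[
\frac{\Phi_{s}(r)-\Phi_{s}(\delta)}{\Phi_{s}(\delta)}\le L^{2}\bigl((r/\delta)^{(1-s)p}-1\bigr) \longrightarrow 0 \quad \text{as }\ s\nearrow 1,
\]
which is exactly the desired concentration.

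With normalization and concentration in hand, I would invoke the classical BBM theorem---together with its $L^{p}$ strengthening (due to Ponce) covering the $L^{p}\setminus W^{1,p}$ case with limit $+\infty$---to conclude that
\[
\iint_{\mathbb{R}^{n}\times\mathbb{R}^{n}}\frac{|f(x+h)-f(x)|^{p}}{|h|^{p}}\,\rho_{s}(h)\,dh\,dx \;\longrightarrow\; K_{p,n}\int_{\mathbb{R}^{n}}|Df|^{p}\,dx
\]
as $s\nearrow 1$, with the limit understood as $+\infty$ whenever $f\notin W^{1,p}(\mathbb{R}^{n})$. Multiplying by $\omega_{n}$ and combining with the vanishing outer contribution yields the proposition with $c(n,p)=\omega_{n}K_{p,n}$. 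The hard part is the concentration step: since \eqref{adec} allows $\phi_{s}$ to deviate from the pure power $t^{(1-s)p}$, the argument hinges on applying \eqref{adec} and the matching lower bound from \eqref{phi.Phi} at precisely the same point $\delta$, so that the spurious $L$-dependent factors collapse into the clean $L^{2}((r/\delta)^{(1-s)p}-1)$ that vanishes at exactly the rate $1-s$.
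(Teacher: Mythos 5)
Your proof is correct and follows essentially the same route as the paper: split the double integral at $|x-y|=r$, bound the outer contribution by $O(1-s)$ via \eqref{out.int} and \eqref{phi.Phi}, and apply the Bourgain--Brezis--Mironescu theorem to the inner part with the mollifier $\rho_{s}$ (the paper's normalization differs from yours by a harmless factor $\omega_{n}$). The useful extra content in your write-up is the explicit verification of the concentration hypothesis $\int_{\{|h|>\delta\}}\rho_{s}\,dh\to 0$, which the paper does not check when it invokes \cite[Theorem~2]{Bre02}; your computation, applying \eqref{adec} on $[\delta,r]$ for the upper bound and \eqref{phi.Phi} at $\delta$ for the matching lower bound to get $1-\Phi_{s}(\delta)/\Phi_{s}(r)\le L^{2}\bigl((r/\delta)^{(1-s)p}-1\bigr)\to 0$, is exactly the right argument and closes a small gap in the paper's presentation.
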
	

\begin{proof}
In the setting of \cite[Theorem~2]{Bre02}, we let 
\[ \rho_{\varepsilon}(t) = \frac{1}{\Phi_{s}(r)}\frac{\phi_{s}(t)}{t^{n}}\chi_{(0,r)}(t) \quad \text{for }\ \varepsilon = 1-s. \] 
Then the theorem implies
	\[ \frac{1}{\Phi_{s}(r)}\iint_{\{|x-y|<r\}}\frac{|f(x)-f(y)|^{p}}{|x-y|^{p}}\frac{\phi_{s}(|x-y|)}{|x-y|^{n}}\,dy\,dx \; \longrightarrow \; c(n,p)\int_{\mathbb{R}^{n}}|Df|^{p}\,dx \quad \text{as }\ s\nearrow1. \]
On the other hand, by \eqref{out.int} and \eqref{phi.Phi}, we have
\begin{equation*} 
	\begin{aligned}
		& \frac{1}{\Phi_{s}(r)}\iint_{\{|x-y| \ge r\}}\frac{|f(x)-f(y)|^{p}}{|x-y|^{p}}\frac{\phi_{s}(|x-y|)}{|x-y|^{n}}\,dy\,dx \\
		& \le \frac{2^{p-1}}{\Phi_{s}(r)}\iint_{\{|x-y| \ge r\}}\frac{|f(x)|^{p}+|f(y)|^{p}}{|x-y|^{p}}\frac{\phi_{s}(|x-y|)}{|x-y|^{n}}\,dy\,dx \\
		& \le \frac{2^{p}}{\Phi_{s}(r)}\int_{\mathbb{R}^{n}}|f(x)|^{p}\left(\int_{\mathbb{R}^{n}\setminus B_{r}(x)}\frac{\phi_{s}(|x-y|)}{|x-y|^{n+p}}\,dy\right)\,dx \\
		& \le \frac{c(n,p,L)}{s r^{p}}\frac{\phi_{s}(r)}{\Phi_{s}(r)}\int_{\mathbb{R}^{n}}|f(x)|^{p}\,dx \\
		& \le \frac{1-s}{s}\frac{c(n,p,L)}{r^{p}}\int_{\mathbb{R}^{n}}|f(x)|^{p}\,dx
		\; \longrightarrow \; 0 \quad \text{as }\ s\nearrow1. 
	\end{aligned} 
\end{equation*} 
Combining the above two displays, we get the desired conclusion.
\end{proof}

We next recall the definition of nonlocal tail given in \eqref{def.tail}. The following lemma is an analog of \cite[Proposition~3.2]{CKW22}, see also \cite[Proposition~13]{DK19}. 
\begin{lemma}\label{lem.tail.fin}
Let $p\in(1,\infty)$, and assume that $\phi:[0,\infty)\rightarrow[0,\infty)$ satisfies \eqref{adec} and \eqref{ainc}. If $ f \in \mathbb{W}^{\phi,p}(\Omega)$, then $\tail(f;x_{0},r)<\infty$ for any ball $B_{r}(x_{0})\subset \Omega$. 
\end{lemma}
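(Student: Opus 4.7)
The plan is to bound the tail by a combination of quantities that are manifestly finite under the assumption $f\in\mathbb{W}^{\phi,p}(\Omega)$. Fix $B_r(x_0)\subset\Omega$ and choose an auxiliary radius $r'=r/2$, so that $B_{r'}(x_0)\subset B_r(x_0)\subset\Omega$. My starting point is the elementary inequality $|f(y)|^{p-1}\le 2^{p-2}\bigl(|f(y)-f(x)|^{p-1}+|f(x)|^{p-1}\bigr)$; averaging over $x\in B_{r'}(x_0)$ and integrating against $\phi(|y-x_0|)|y-x_0|^{-n-p}$ over $y\in\mathbb{R}^{n}\setminus B_r(x_0)$ splits the tail into a \emph{local} piece involving $|f(x)|^{p-1}$ and a \emph{difference} piece involving $|f(y)-f(x)|^{p-1}$.

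For the local piece, H\"older's inequality gives $\mean{B_{r'}(x_0)}|f(x)|^{p-1}\,dx\le \bigl(\mean{B_{r'}(x_0)}|f(x)|^{p}\,dx\bigr)^{(p-1)/p}$, which is finite since $B_{r'}(x_0)\subset\Omega$ and $f|_\Omega\in L^{p}(\Omega)$. The corresponding $y$-integral $\int_{\mathbb{R}^{n}\setminus B_r(x_0)}\phi(|y-x_0|)|y-x_0|^{-n-p}\,dy$ is controlled directly by the upper bound in \eqref{out.int}. The difference piece is handled by first applying H\"older in $x$ (exponents $p/(p-1)$ and $p$) and then H\"older in $y$ (same exponents), which produces a factor $I_2^{1/p}$, where $I_2$ is the same $y$-integral already bounded via \eqref{out.int}, times a factor $I_1^{(p-1)/p}$ with
$$I_1=\int_{\mathbb{R}^{n}\setminus B_r(x_0)}\mean{B_{r'}(x_0)}|f(y)-f(x)|^{p}\,dx\,\frac{\phi(|y-x_0|)}{|y-x_0|^{n+p}}\,dy.$$

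The heart of the argument is to dominate $I_1$ by the seminorm built into the definition of $\mathbb{W}^{\phi,p}(\Omega)$. For $x\in B_{r'}(x_0)$ and $y\in\mathbb{R}^{n}\setminus B_r(x_0)$ the triangle inequality yields $|x-y|\ge r/2$, together with $|y-x_0|\le|x-y|+r'\le 3|x-y|$ and $|x-y|\le 3|y-x_0|$. Combining this comparability with the monotonicity of $\phi$ and the doubling-type inequality \eqref{adecequiv} gives
$$\frac{\phi(|y-x_0|)}{|y-x_0|^{n+p}}\le c(n,p,L)\,\frac{\phi(|x-y|)}{|x-y|^{n+p}},$$
after which $I_1$ is dominated by a constant multiple of the $\mathbb{W}^{\phi,p}$-seminorm of $f$ taken over $B_{r'}(x_0)\times(\mathbb{R}^{n}\setminus B_r(x_0))\subset\mathcal{C}_\Omega$, hence finite by hypothesis. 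This kernel-comparison is the only nontrivial step; everything else is routine bookkeeping with H\"older's inequality and \eqref{out.int}.
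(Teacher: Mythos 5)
Your proof is correct in substance and essentially parallels the paper's argument: both proofs hinge on transferring the kernel weight from $|y-x_{0}|$ to $|x-y|$ (valid because $|x-y|\approx|y-x_{0}|$ for $x$ near $x_{0}$ and $y$ outside $B_{r}$), decomposing $f(y)$ through $f(x)-f(y)$ and $f(x)$, and controlling the two pieces by the $\mathbb{W}^{\phi,p}$ seminorm and the $L^{p}(\Omega)$ norm together with \eqref{out.int}. The main organizational difference is that the paper lower-bounds the manifestly finite quantity $\|f\|_{L^{p}(\Omega)}^{p}+\iint_{\mathcal{C}_{\Omega}}(\cdots)$ using $|f(x)|^{p}+|f(x)-f(y)|^{p}\ge 2^{1-p}|f(y)|^{p}$, establishes finiteness of the $p$-th--power weighted tail integral, and then applies H\"older once at the very end to pass to the $(p-1)$-th power; you instead upper-bound the $(p-1)$-th--power tail directly and pay for it with a double H\"older. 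Both routes work and buy nothing over each other; the paper's version is marginally cleaner because the single H\"older step is outsourced to the end.

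One small slip: the pointwise inequality $|f(y)|^{p-1}\le 2^{p-2}\big(|f(y)-f(x)|^{p-1}+|f(x)|^{p-1}\big)$ holds (by convexity of $t\mapsto t^{p-1}$) only when $p\ge 2$. For $1<p<2$ the exponent $p-1$ lies in $(0,1)$, where one should instead invoke subadditivity $(a+b)^{p-1}\le a^{p-1}+b^{p-1}$; the correct universal constant is $\max\{1,2^{p-2}\}$. This is a cosmetic fix and does not affect the validity of the argument, but as written the constant $2^{p-2}<1$ is false for $p<2$.
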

\begin{proof}
Let us assume $x_{0}=0$ without loss of generality, and then show 
\begin{equation}\label{tail.finite}
\int_{\mathbb{R}^{n}\setminus B_{r}}\frac{|f(y)|^{p-1}}{|y|^{p}}\frac{\phi(|y|)}{|y|^{n}}\,dy < \infty.
\end{equation}
Observe that $t \mapsto \phi(t)/t^{n+p}$ is almost decreasing with constant $L$, and that $|x-y| \le 2|y|$ for any $x \in B_{r}$ and $y\in\mathbb{R}^{n}\setminus B_{r}$. We thus have
\begin{equation*}
\begin{aligned}
\infty & > \int_{\Omega}|f(x)|^{p}\,dx + \iint_{\mathcal{C}_{\Omega}}\frac{|f(x)-f(y)|^{p}}{|x-y|^{p}}\frac{\phi(|x-y|)}{|x-y|^{n}}\,dx\,dy \\
& \ge \int_{B_{r}}|f(x)|^{p}\,dx + L^{-1}\int_{\mathbb{R}^{n}\setminus B_{r}}\int_{B_{r}}\frac{|f(x)-f(y)|^{p}}{|y|^{p}}\frac{\phi(|y|)}{|y|^{n}}\,dx\,dy.
\end{aligned}
\end{equation*}
From the above display and the inequality $|f(x)|^{p} + |f(x)-f(y)|^{p} \ge 2^{-p+1}|f(y)|^{p}$, it follows that
\begin{equation}\label{abc}
\begin{aligned}
\infty & > \int_{\mathbb{R}^{n}\setminus B_{r}}\int_{B_{r}}\frac{|f(x)|^{p}}{|y|^{p}}\frac{\phi(|y|)}{|y|^{n}}\,dx\,dy + \int_{\mathbb{R}^{n}\setminus B_{r}}\int_{B_{r}}\frac{|f(x)-f(y)|^{p}}{|y|^{p}}\frac{\phi(|y|)}{|y|^{n}}\,dx\,dy \\
& \ge 2^{-p+1}\int_{\mathbb{R}^{n}\setminus B_{r}}\int_{B_{r}}\frac{|f(y)|^{p}}{|y|^{p}}\frac{\phi(|y|)}{|y|^{n}}\,dx\,dy = 2^{-p+1}|B_{r}|\int_{\mathbb{R}^{n}\setminus B_{r}}\frac{|f(y)|^{p}}{|y|^{p}}\frac{\phi(|y|)}{|y|^{n}}\,dy,
\end{aligned}
\end{equation}
where we have also used \eqref{out.int}. 
Using \eqref{out.int} once again, we apply H\"older's inequality to the last integral in \eqref{abc}, which leads to \eqref{tail.finite}. 
\end{proof}

\subsection{Existence of weak solutions}\label{subsec3}
In this section, we prove the existence and uniqueness of weak solutions to the Dirichlet problem
\begin{equation}\label{Dirichlet}
\left\{
\begin{aligned}
\mathcal{L}u &= 0&\text{in }& \Omega, \\
u &= g&\text{in }& \mathbb{R}^{n}\setminus\Omega,
\end{aligned}
\right.
\end{equation}
where $g \in \mathbb{W}^{\phi,p}(\Omega)$ is a given boundary datum. By a standard argument (see for instance \cite[Theorem~2.3]{DKP16}), $u \in \mathbb{W}^{\phi,p}(\Omega)$ is a weak solution to \eqref{mainPDE} if and only if it is a minimizer of
\begin{equation}\label{ftl}
\mathcal{F}(w;\Omega) \coloneqq \iint_{\mathcal{C}_{\Omega}}|w(x)-w(y)|^{p}K(x,y)\,dx\,dy,
\end{equation}
which means that 
\[ \mathcal{F}(u;\Omega) \le \mathcal{F}(w;\Omega) \]
for any $w \in \mathbb{W}^{\phi,p}(\Omega)$ with $w=u$ a.e. in $\mathbb{R}^{n}\setminus \Omega$. 
In this point of view, we aim to prove existence and uniqueness of minimizers of \eqref{ftl} over the convex admissible set
\[ \mathbb{W}^{\phi,p}_{g}(\Omega) \coloneqq \{w\in\mathbb{W}^{\phi,p}(\Omega): w=g \ \text{ a.e. in }\mathbb{R}^{n}\setminus\Omega\} \]
via direct methods of the calculus of variations.

\begin{theorem}
Let $\phi:[0,\infty)\rightarrow[0,\infty)$ be a measurable function satisfying \eqref{adec} for some $s \in (0,1)$. Under assumption \eqref{kernel}, there exists a unique minimizer $u \in \mathbb{W}^{\phi,p}_{g}(\Omega)$ of \eqref{ftl}, which is the weak solution to \eqref{Dirichlet}.
\end{theorem}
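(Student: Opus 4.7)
The plan is to apply the direct method of the calculus of variations. Since the integrand in $\mathcal{F}(\cdot;\Omega)$ is nonnegative and the upper bound in \eqref{kernel} combined with $g \in \mathbb{W}^{\phi,p}(\Omega)$ gives $\mathcal{F}(g;\Omega) < \infty$, the infimum $I \coloneqq \inf_{w \in \mathbb{W}^{\phi,p}_g(\Omega)} \mathcal{F}(w;\Omega)$ lies in $[0,\infty)$. I pick a minimizing sequence $\{u_k\} \subset \mathbb{W}^{\phi,p}_g(\Omega)$ with $\mathcal{F}(u_k;\Omega) \le I+1$ and set $v_k \coloneqq u_k - g$, so that $v_k \equiv 0$ a.e.\ in $\mathbb{R}^n \setminus \Omega$. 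The elementary bound $|v_k(x)-v_k(y)|^p \le 2^{p-1}(|u_k(x)-u_k(y)|^p + |g(x)-g(y)|^p)$ together with the lower bound in \eqref{kernel} then yields $\iint_{\mathcal{C}_\Omega} |v_k(x)-v_k(y)|^p \phi(|x-y|)|x-y|^{-n-p}\,dx\,dy \le C$ uniformly in $k$; since $v_k$ vanishes outside $\Omega$, this double integral in fact coincides with the full $W^{\phi,p}$-seminorm of $v_k$ over $\mathbb{R}^n$.

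Next, I apply Proposition~\ref{prop:embedding}(iii) to upgrade this to a uniform bound $[v_k]_{W^{s,p}(\mathbb{R}^n)} \le C$, and the classical fractional Poincar\'e inequality for functions supported in $\Omega$ provides $\|v_k\|_{L^p(\mathbb{R}^n)} \le C[v_k]_{W^{s,p}(\mathbb{R}^n)}$. Hence $\{v_k\}$ is bounded in $W^{s,p}(B_R)$ for any fixed ball $B_R \supset \Omega$, so by reflexivity and the Rellich-Kondrachov compact embedding I extract a subsequence (not relabeled) with $v_k \rightharpoonup v$ weakly in $W^{s,p}(B_R)$, $v_k \to v$ strongly in $L^p(B_R)$, and $v_k \to v$ pointwise a.e.; clearly $v = 0$ a.e.\ in $\mathbb{R}^n \setminus \Omega$. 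Defining $u \coloneqq v + g$, I have $u = g$ a.e.\ in $\mathbb{R}^n \setminus \Omega$ and $u_k \to u$ a.e.\ in $\mathbb{R}^n$ (trivially outside $\Omega$, since $u_k \equiv g$ there). Fatou's lemma applied to the nonnegative integrand of $\mathcal{F}$ then gives
\[
\mathcal{F}(u;\Omega) \le \liminf_{k\to\infty} \mathcal{F}(u_k;\Omega) = I,
\]
which forces $u \in \mathbb{W}^{\phi,p}_g(\Omega)$ and identifies $u$ as a minimizer.

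Uniqueness follows from the strict convexity of $t \mapsto |t|^p$ for $p > 1$: if $u_1, u_2$ were two distinct minimizers, then the admissible competitor $w \coloneqq (u_1+u_2)/2$ would satisfy $\mathcal{F}(w;\Omega) \le \tfrac12 \mathcal{F}(u_1;\Omega) + \tfrac12 \mathcal{F}(u_2;\Omega) = I \le \mathcal{F}(w;\Omega)$, forcing equality in the pointwise convexity inequality and hence $u_1(x) - u_1(y) = u_2(x) - u_2(y)$ for a.e.\ $(x,y) \in \mathcal{C}_\Omega$; taking $y \in \mathbb{R}^n\setminus\Omega$, where $u_1 = u_2 = g$, then yields $u_1 = u_2$ a.e.\ in $\Omega$. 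The main technical obstacle I foresee is coercivity at this early point of the paper, since the sharp Sobolev-Poincar\'e inequalities for $W^{\phi,p}$ developed later (Theorem~\ref{thm:SoboPoin} and Corollary~\ref{cor.SP}) are not yet available; Proposition~\ref{prop:embedding}(iii) is the key tool that circumvents this by reducing everything to the well-understood fractional Sobolev space $W^{s,p}$.
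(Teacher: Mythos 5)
Your argument follows the same route as the paper's: you shift to $v_k = u_k - g$, control the $W^{\phi,p}$-energy of $v_k$ via the kernel bounds, pass to the $W^{s,p}$-seminorm through Proposition~\ref{prop:embedding}(iii), use fractional Poincar\'e plus Rellich--Kondrachov compactness to extract an a.e.\ convergent subsequence, and conclude by Fatou's lemma; uniqueness via strict convexity is likewise the same (the paper states it, you carry it out). The only minor imprecision is that Proposition~\ref{prop:embedding}(iii) yields a bound on $[v_k]_{W^{s,p}(B_{2R})}$ for a fixed ball $B_{2R}\supset\Omega$, not directly on the full $\mathbb{R}^n$-seminorm as you write, but this does not affect the argument since the fractional Poincar\'e inequality and the compact embedding only require the local seminorm.
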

\begin{proof}
The uniqueness follows from the strict convexity of the functional \eqref{ftl}, so we prove the existence only. 
Observe that the admissible set $\mathbb{W}^{\phi,p}_{g}(\Omega)$ is nonempty, since $g \in \mathbb{W}^{\phi,p}_{g}(\Omega)$. 
Let $\{u_{j}\} \subset \mathbb{W}^{\phi,p}_{g}(\Omega)$ be a minimizing sequence of \eqref{ftl}, i.e.,
\[ \lim_{j\to\infty}\mathcal{F}(u_{j};\Omega) = \inf_{w\in\mathbb{W}^{\phi,p}_{g}}\mathcal{F}(w;\Omega) < \infty. \]
Then there exists a constant $M>0$ satisfying
\[ \iint_{\mathcal{C}_{\Omega}}\frac{|u_{j}(x)-u_{j}(y)|^{p}}{|x-y|^{p}}\frac{\phi(|x-y|)}{|x-y|^{n}}\,dx\,dy \le \Lambda\mathcal{F}(u_{j};\Omega) \le M \qquad \text{for any}\;\; j \in \mathbb{N}. \]
Now, with $R \coloneqq \diam (\Omega)$, we choose a ball $B_{R} \equiv B_{R}(z)$ such that $\Omega \subset B_{R}$. 
Let $v_{j} \coloneqq u_{j}-g$, then $v_{j} \in \mathbb{W}^{\phi,p}(\Omega)$ and $v_{j}=0$ a.e. in $\mathbb{R}^{n}\setminus \Omega$. 
By Proposition~\ref{prop:embedding} (iii),
it satisfies
\begin{equation*}
\begin{aligned}
& \frac{\phi(4R)}{L(4R)^{(1-s)p}}\int_{B_{2R}}\int_{B_{2R}}\frac{|v_{j}(x)-v_{j}(y)|^{p}}{|x-y|^{s p}}\frac{dx\,dy}{|x-y|^{n}} \le \int_{B_{2R}}\int_{B_{2R}}\frac{|v_{j}(x)-v_{j}(y)|^{p}}{|x-y|^{p}}\frac{\phi(|x-y|)}{|x-y|^{n}}\,dx\,dy \\
& \le c\iint_{\mathcal{C}_{\Omega}}\frac{|u_{j}(x)-u_{j}(y)|^{p}}{|x-y|^{p}}\frac{\phi(|x-y|)}{|x-y|^{n}}\,dx\,dy + c\iint_{\mathcal{C}_{\Omega}}\frac{|g(x)-g(y)|^{p}}{|x-y|^{p}}\frac{\phi(|x-y|)}{|x-y|^{n}}\,dx\,dy \\
& \le c\left(M+\iint_{\mathcal{C}_{\Omega}}\frac{|g(x)-g(y)|^{p}}{|x-y|^{p}}\frac{\phi(|x-y|)}{|x-y|^{n}}\,dx\,dy\right)
\end{aligned}
\end{equation*}
for any $j \in \mathbb{N}$. In particular, this along with the fractional Poincar\'e inequality \cite[Lemma~4.7]{Coz17} implies that $\{v_{j}\}$ is bounded in $W^{s,p}(B_{R})$. 
Then we apply the compact embedding of fractional Sobolev spaces \cite[Theorem~7.1]{DPV} to see that, up to non-relabeled subsequences, there exists a function $v \in W^{s,p}(B_{R})$ such that
\begin{equation*}
\left\{
\begin{aligned}
v_{j} \rightharpoonup v & \quad \text{in}\;\; W^{s,p}(B_{R})  \\
v_{j} \rightarrow v & \quad \text{in}\;\; L^{p}(B_{R}) \\
v_{j} \rightarrow v & \quad \text{a.e. in}\;\; B_{R}.
\end{aligned}
\right.
\end{equation*}
We further extend $v$ to $\mathbb{R}^{n}$ by letting $v=0$ on $\mathbb{R}^{n}\setminus B_{R}$, and then define $u \coloneqq v+g$. Then $u=g$ a.e. in $\mathbb{R}^{n}\setminus\Omega$ and $u_{j}\rightarrow u$ a.e. in $\mathbb{R}^{n}$, so Fatou's lemma implies
\[ \mathcal{F}(u;\Omega) \le \liminf_{j\rightarrow\infty}\mathcal{F}(u_{j};\Omega) = \inf_{w\in\mathbb{W}^{\phi,p}_{g}(\Omega)}\mathcal{F}(w;\Omega). \]
This shows that $u \in\mathbb{W}^{\phi,p}_{g}(\Omega)$ and it is a minimizer of \eqref{ftl}, as desired. 
\end{proof}

\subsection{Iteration lemmas}
We finally recall two standard iteration lemmas, see for instance \cite{Giu}.

\begin{lemma}\label{lemseq}
Let $\{y_i\}_{i=0}^\infty$ be a sequence of nonnegative numbers and satisfy
\[
y_{i+1}\leq b_1b_2^{i}y_i^{1+\beta}, \ \ \ i=0,1,2,\dots
\]
for some $b_1,\beta>0$ and $b_2>1$. If 
\[
y_0\leq b_1^{-1/\beta}b_2^{-1/\beta^2},
\]
then $y_i\to 0$ as $i\to \infty$.
\end{lemma}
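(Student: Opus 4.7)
The plan is to prove the standard geometric decay estimate
\[
y_i \le b_1^{-1/\beta}\,b_2^{-1/\beta^2}\,b_2^{-i/\beta} \qquad \text{for every } i \ge 0,
\]
by induction on $i$. Once this is established, the conclusion $y_i \to 0$ follows immediately because $b_2 > 1$ and $\beta > 0$ together force $b_2^{-i/\beta} \to 0$ as $i \to \infty$.

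First I would set $C \coloneqq b_1^{-1/\beta}\,b_2^{-1/\beta^2}$, so the hypothesis reads $y_0 \le C$, matching the proposed bound for $i = 0$. For the inductive step, assuming $y_i \le C\, b_2^{-i/\beta}$, I would plug this directly into the recursion and compute:
\[
y_{i+1} \le b_1\,b_2^{i}\,y_i^{1+\beta} \le b_1\,b_2^{i}\,C^{1+\beta}\,b_2^{-i(1+\beta)/\beta} = b_1\,C^{1+\beta}\,b_2^{-i/\beta}.
\]
The target upper bound at step $i+1$ is $C\,b_2^{-(i+1)/\beta}$, so it suffices to check that
\[
b_1\,C^{\beta} \le b_2^{-1/\beta},
\]
which is just a rearrangement of the defining relation $C = b_1^{-1/\beta}\,b_2^{-1/\beta^2}$. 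This closes the induction.

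There is no real obstacle here; the only thing to watch is the exponent arithmetic, in particular the cancellation $i - i(1+\beta)/\beta = -i/\beta$ and the clean identity $b_1 C^\beta = b_2^{-1/\beta}$. I would present the computation in one short display and conclude with the observation that $b_2 > 1$ makes the right-hand side of the geometric bound tend to zero, which yields $\lim_{i\to\infty} y_i = 0$ as claimed.
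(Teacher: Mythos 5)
Your induction is correct: the bound $y_i \le C\,b_2^{-i/\beta}$ with $C = b_1^{-1/\beta} b_2^{-1/\beta^2}$ closes cleanly (indeed $b_1 C^\beta = b_2^{-1/\beta}$ holds with equality), and $b_2>1$, $\beta>0$ give $y_i \to 0$. The paper does not prove this lemma but cites \cite{Giu}; your argument is the standard one found there, so there is nothing to reconcile.
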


\begin{lemma}\label{tech.lemma}
Let $h:[R,2R] \rightarrow [0,\infty)$ be a bounded function that satisfies
\begin{equation*}
h(r_{1}) \le \vartheta h(r_{2}) + \frac{C_{1}}{(r_{2}-r_{1})^{\nu}} + C_{2}
\end{equation*}
for any $r_{1},r_{2}$ with $R \le r_{1} < r_{2} \le 2R$, where $\vartheta \in (0,1)$ and $C_{1},C_{2},\nu > 0$ are given constants. Then there exists a constant $c = c(\vartheta,\nu)>0$ such that
\begin{equation*}
h(R) \le c\left[\frac{C_{1}}{R^{\nu}} + C_{2}\right].
\end{equation*}
\end{lemma}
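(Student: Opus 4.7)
The plan is to prove the lemma by iterating the hypothesis along a carefully chosen geometric sequence of radii in $[R,2R]$. The subtlety is that the inequality is useless if applied at a single pair $(r_1,r_2)$: the $h$-term on the right has coefficient $\vartheta < 1$, but the denominator $(r_2-r_1)^\nu$ blows up if one tries to push the radii together. The idea is to iterate countably many times, with geometrically shrinking gaps, so the $h$-terms pick up a factor $\vartheta^k \to 0$ while the remainder terms sum to a convergent geometric series.

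Concretely, I would fix a parameter $\tau \in (0,1)$ to be chosen later, set $r_0 \coloneqq R$, and define
\[ r_{i+1} \coloneqq r_i + (1-\tau)\tau^i R, \qquad i = 0,1,2,\ldots. \]
A telescoping sum gives $r_i = (2-\tau^i)R$, so the sequence lies in $[R,2R)$, increases to $2R$, and the gap $r_{i+1}-r_i = (1-\tau)\tau^i R$ matches precisely the singular denominator appearing in the hypothesis. This last feature is what makes the iteration self-similar.

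Applying the assumed inequality at each pair $(r_i, r_{i+1})$ and iterating $k$ times gives
\[ h(R) \;\le\; \vartheta^k h(r_k) \;+\; \frac{C_1}{(1-\tau)^\nu R^\nu}\sum_{i=0}^{k-1}\bigl(\vartheta \tau^{-\nu}\bigr)^i \;+\; C_2 \sum_{i=0}^{k-1} \vartheta^i. \]
The key (and only) algebraic decision is to choose $\tau$ so that $\vartheta \tau^{-\nu} < 1$; for instance $\tau \coloneqq \vartheta^{1/(2\nu)}$ makes the ratio equal to $\vartheta^{1/2} < 1$. Both geometric series then converge uniformly in $k$, while the assumed \emph{boundedness} of $h$ on $[R,2R]$ forces $\vartheta^k h(r_k) \to 0$ as $k \to \infty$. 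Letting $k \to \infty$ yields the desired bound with constant $c = c(\vartheta,\nu)$.

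There is no genuine obstacle here: the boundedness hypothesis on $h$ is precisely what is needed to discard the residual term, and the whole proof reduces to the algebraic balancing $\vartheta\tau^{-\nu} < 1$. No structural information about $h$ beyond the stated recursive inequality plays any role, which is why the lemma works verbatim in all the settings it will be invoked later in the paper.
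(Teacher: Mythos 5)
Your proof is correct and is essentially the standard argument for this iteration lemma; the paper itself does not reproduce a proof but simply cites Giusti's book, where precisely this geometric-radii iteration with the balancing condition $\vartheta\tau^{-\nu}<1$ is used. Nothing to add.
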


\section{Sobolev-Poincar\'e inequalities}\label{sec3}

In this section, we obtain sharp  Sobolev-Poincar\'e type inequalities for $W^{\phi,p}$-functions, which generalize those in \cite{BBM02} concerned with $W^{s,p}$. 
As mentioned in the introduction, a main point here is the stability of the estimates as $s \nearrow 1$, which plays a crucial role in our analysis of equation \eqref{mainPDE}.
To show this, we modify and develop the approach in \cite{BDOR}.

\begin{theorem}\label{thm:SoboPoin}
Let $s \in (0,1)$ and $p\in (1,\infty)$ with $s p < n$, and let $\phi : [0,\infty)\to [0,\infty)$ be a measurable function satisfying \eqref{Dini}, \eqref{adec} and \eqref{ainc}. 
If $v \in W^{\phi,p}(B_r)$ for a ball $B_r \subset \R^n$, then
\begin{equation}\label{Sobopoin}
\left(\mean{B_r} |v-(v)_{B_r}|^{p^{*}_{s}}\,dx\right)^{p/p^{*}_{s}} \le \frac{c}{s^p(n-s p)^{p-1}} \frac{r^p}{\Phi(r)}\mean{B_r} \int_{B_r} \frac{|v(x)-v(y)|^p}{|x-y|^p} \frac{\phi(|x-y|)}{|x-y|^n}\,dy\,dx 
\end{equation}
holds for a constant $c=c(n,p,\tilde{s},L)>0$, where  $p_s^* \coloneqq np/(n-s p)$. 
\end{theorem}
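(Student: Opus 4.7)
The plan is to follow the Riesz-potential strategy of \cite{BDOR}, sharpened so that $\phi$ enters the final constant only through the cumulative quantity $\Phi$. Two external ingredients are used: the classical Hardy--Littlewood--Sobolev inequality $\|I_{s}g\|_{L^{p^{*}_{s}}(\mathbb{R}^{n})}\le c(n)(n-sp)^{-(p-1)/p}\|g\|_{L^{p}(\mathbb{R}^{n})}$, which after raising to the $p$-th power accounts for the factor $(n-sp)^{-(p-1)}$ in \eqref{Sobopoin}, and the almost-decreasing property \eqref{adec}, which supplies the pointwise lower bound $\phi(t)\ge L^{-1}(t/\rho)^{(1-s)p}\phi(\rho)$ for $t\le\rho$. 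A direct reduction to $W^{s,p}$ via Proposition~\ref{prop:embedding}(iii) combined with the classical Bourgain--Brezis--Mironescu Sobolev--Poincar\'e would yield only the weaker prefactor $r^{p}/\phi(r)$, which is useless in the borderline cases where $\Phi(r)\gg\phi(r)$; the refinement below is therefore needed.

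The first step is a scale-dependent pointwise Poincar\'e inequality: for a.e.\ $x\in B_{r}$ and every $\rho\in(0,2r]$,
\[ |v(x)-v_{B_{\rho}(x)}|^{p} \le c(n,p,L)\,\frac{\rho^{p}}{\phi(\rho)}\int_{B_{\rho}(x)} \frac{|v(x)-v(y)|^{p}\phi(|x-y|)}{|x-y|^{n+p}}\,dy. \]
This follows from $|v(x)-v_{B_{\rho}(x)}| \le \mean{B_{\rho}(x)}|v(x)-v(y)|\,dy$ and H\"older's inequality with conjugate exponents $(p,p/(p-1))$ against the weight $\phi(|x-y|)^{1/p}|x-y|^{-(n+p)/p}$; the conjugate integral $\mean{B_{\rho}(x)}|x-y|^{(n+p)/(p-1)}\phi(|x-y|)^{-1/(p-1)}\,dy$ is controlled by \eqref{adec} after polar integration, producing $c(n,p,L)\,\rho^{p}/\phi(\rho)$.

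Second, I would telescope along the dyadic scales $\rho_{k}=2^{1-k}r$, $k\ge 0$: at Lebesgue points,
\[ |v(x) - v_{B_{2r}(x)}| \le \sum_{k=0}^{\infty}|v_{B_{\rho_{k+1}}(x)} - v_{B_{\rho_{k}}(x)}| \le c\sum_{k=0}^{\infty}\mean{B_{\rho_{k}}(x)} |v(y)-v_{B_{\rho_{k}}(x)}|\,dy. \]
Applying the pointwise Poincar\'e pointwise in $y$ inside each inner average bounds the $k$-th summand by $c(\rho_{k}^{p}/\phi(\rho_{k}))^{1/p}\bigl(\int_{B_{2\rho_{k}}(x)}\phi(|y-z|)|v(y)-v(z)|^{p}/|y-z|^{n+p}\,dz\bigr)^{1/p}$. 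Choosing H\"older weights in the $k$-sum so that the scale factor $\rho_{k}^{p}/\phi(\rho_{k})$ is paired against the dyadic analog of $\phi(\rho)\,d\rho/\rho$, and exchanging summation and integration by Fubini, the whole sum collapses into a single integral of Riesz type:
\[ |v(x) - (v)_{B_{r}}|^{p} \le \frac{c(n,p,L)}{s^{p}}\,\frac{r^{p}}{\Phi(r)}\left(\int_{B_{4r}}\frac{g(y)}{|x-y|^{n-s}}\,dy\right)^{p},\quad g(y)\coloneqq\Bigl(\int_{B_{4r}}\frac{|v(y)-v(z)|^{p}\phi(|y-z|)}{|y-z|^{n+p}}\,dz\Bigr)^{1/p}, \]
after one more application of the pointwise Poincar\'e at scale $\rho=2r$ to pass from $v_{B_{2r}(x)}$ to $(v)_{B_{r}}$. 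Here $\Phi(r)$ emerges because $\sum_{k}\phi(\rho_{k})$ is the dyadic Riemann sum for $\int_{0}^{2r}\phi(\tau)\,d\tau/\tau = \Phi(2r)\le c(L,p)\Phi(r)$, and $s^{-p}$ tracks the exponent from \eqref{adec} in the scale-integration. Applying HLS to $I_{s}g$ and integrating in $x$ yields \eqref{Sobopoin}, with the enlargement from $B_{r}$ to $B_{4r}$ on the right absorbed by a standard covering argument.

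The main obstacle is Step~2: arranging the H\"older weights in the dyadic sum so that simultaneously the prefactor becomes $r^{p}/\Phi(r)$ (not merely $r^{p}/\phi(r)$) and the summed kernel is honestly of Riesz form $|x-y|^{-(n-s)}$, so that HLS applies directly without extra loss. This requires \eqref{adec} to be invoked at the correct scale and with the correct exponent in each dyadic term — exactly the place where the argument of \cite{BDOR} must be genuinely refined in our more general setting. The $s^{-p}$ in the final constant is a direct consequence of this scale bookkeeping and cannot be removed by the present strategy.
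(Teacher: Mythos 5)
Your Step~1 (the scale-dependent pointwise estimate $|v(x)-v_{B_\rho(x)}|^p\lesssim (\rho^p/\phi(\rho))\int_{B_\rho(x)}|v(x)-v(y)|^p\phi(|x-y|)/|x-y|^{n+p}\,dy$) is correct and a reasonable building block: H\"older against $\phi(|x-y|)^{1/p}|x-y|^{-(n+p)/p}$ and \eqref{adec} do give the constant $c(n,p,L)\,\rho^p/\phi(\rho)$, uniformly in $s$. The problem is Step~2. After telescoping and applying Step~1 at each scale $\rho_k=2^{1-k}r$, you have
\[
|v(x)-v_{B_{2r}(x)}|\;\le\;c\sum_{k\ge 0}\Big(\frac{\rho_k^p}{\phi(\rho_k)}\Big)^{1/p} G_k(x)^{1/p},\qquad G_k(x)=\mean{B_{\rho_k}(x)}\mean{B_{\rho_k}(x)}\frac{|v(y)-v(z)|^p\,\phi(|y-z|)}{|y-z|^{n+p}}\,dz\,dy,
\]
a sum of $1/p$-powers, and you claim that a judicious H\"older weighting turns this into $\frac{c}{s}\Phi(r)^{-1/p}r^{1-s}\int g(y)/|x-y|^{n-s}\,dy$ with $g(y)=(\int|v(y)-v(z)|^p\phi(|y-z|)/|y-z|^{n+p}\,dz)^{1/p}$. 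No choice of weights produces this. If you pair the scales against $\phi(\rho_k)$ (the dyadic analogue of $d\Phi$), H\"older gives $|v(x)-c|^p\le c\,\Phi(r)^{p-1}\sum_k\rho_k^p\phi(\rho_k)^{-p}G_k(x)$; then Fubini and \eqref{adec} produce $\int g(y)^p\,|x-y|^{-n-(1-s)p^2+p}\,dy$ times $r^{(1-s)p^2}/\phi(r)^p$ — a kernel acting on $g^p$, not on $g$, with Riesz exponent $n+(1-s)p^2-p$ instead of $n-s$. The exponent is wrong (it is only $<n$ when $(1-s)p<1$) and the object is not a $p$-th power of a Riesz potential, so HLS does not apply. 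The obstruction is structural: $G_k(x)^{1/p}$ dominates, rather than is dominated by, the linear average $\mean{B_{\rho_k}(x)}g$, so the $1/p$-powers cannot be collapsed into $\int g/|x-y|^{n-s}$ by term-by-term estimates. There are two smaller issues as well: (a) you bound $|v(x)-(v)_{B_r}|$ by integrals of $g$ over $B_{4r}$, but $v$ is only given in $B_r$ and the RHS of \eqref{Sobopoin} is $\mean{B_r}\int_{B_r}$, so "absorb by covering" does not work without additional care; (b) the HLS inequality you quote drops the $1/s$ blow-up of the un-normalized Riesz kernel, and if you restore it your attributed $s^{-p}$ from Step~2 would stack to $s^{-2p}$ and overshoot the stated constant.

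The paper's proof replaces the dyadic telescoping with a continuous one. It builds a mollifier $\eta$ directly from $\phi$ (essentially $\phi(|x|)/|x|^n$ on $B_\rho$, corrected to vanish at $\partial B_\rho$ and normalized to total mass $1$), writes $v(x)-(v*\eta_{t\cdot})(y)$ as an integral of $\partial_t(v*\eta_t)$, and computes this derivative explicitly. The kernel that appears after \eqref{equiv0}--\eqref{equiv1} is, up to $\Phi(\rho)^{-1}$, a probability density in $\xi$ (this is \eqref{phi.measure}); Jensen's inequality against that probability measure is then what produces $\Phi(r)^{-1/p}$ and the genuine Riesz kernel $|x-z|^{-(n-s)}$ simultaneously. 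This is exactly the step your dyadic argument cannot reproduce: the continuous family $\eta_t$ provides an exact identity before any inequality is applied, whereas your dyadic scheme applies H\"older at each scale independently and loses the cancellation that lets $\phi$ reassemble into $\Phi$ inside a single Riesz potential.
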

\begin{proof}
In this proof, all the implicit constants and constants $c$ depend only on $n$, $p$, $\tilde{s}$ and $L$, but not on $s$. We may assume that the center of $B_r$ is the origin, and set $\rho \coloneqq r/2$ and $B_\rho=B_\rho(0)$.

\textit{Step 1: Reduction to differentiable function. } We first find
a function that is equivalent to $\phi$, but satisfies the same condition as $\phi$ and as well as additional conditions. 
%
Define $\tilde{\phi}(t) \coloneqq \phi(t)/t^{(1-\tilde{s})p}$, then $\tilde{\phi}$ is almost increasing and $t\mapsto t\tilde{\phi}(t)/t^{1+(\tilde{s}-s)p}$ is almost decreasing.

\[
Lt\tilde{\phi}(t) \ge  \int_0^t \tilde{\phi}(\tau)\, d\tau  \ge \frac{t\tilde{\phi}(t)}{L(1+(\tilde{s}-s)p)}   \ge  \frac{t\tilde{\phi}(t)}{L(1+\tilde{s}p)}
\quad \text{for all }\ t>0.
\]
%
Define $\tilde{\psi},\psi:[0,\infty) \to [0,\infty)$ by $\tilde{\psi}(0) \coloneqq 0$, $\psi(0) \coloneqq 0$ and
\[ 
\tilde{ \psi}(t) \coloneqq \frac{1}{t}\int_0^t \tilde{\phi}(\tau)\, d\tau \quad \text{and} \quad \psi(t) \coloneqq t^{(1-\tilde{s})p}\tilde{\psi}(t)  
\quad \text{for }\ t>0.
\]
%
Then $\tilde{\psi}$  is nondecreasing and differentiable on $(0,\infty)$. Moreover, $\psi$ satisfies the inequality
\[
\{L(1+\tilde{s}p)\}^{-1}\phi(t) \le \psi(t) \le \phi(t)
\quad \text{for all  }\ t\ge 0,
\]
\eqref{Dini} and \eqref{adec} with  $\phi$ replaced by  $\psi$ and with the same $p$, $s$ and $\tilde{s}$ as for $\phi$. In addition, the function $t\mapsto t \tilde{\psi}(t)$, $t\in[0,\infty)$, is nondecreasing, convex and satisfies the $\Delta_2$ condition, i.e., $(2t)\tilde{\psi}(2t)\le c(p,\tilde{s},L) t\tilde{\psi}(t)$ for all $t\ge 0$. Consequently, by \cite[Lemma 2.2.6]{HH19book}, there exists a large constant $\tilde{q}>0$ depending only on $\tilde{s}$, $p$ and $L$ such that $t \mapsto t\tilde{\psi}(t)/t^{\tilde{q}+1}= \psi(t)/t^{\tilde{q}+(1-\tilde{s})p}$ is nonincreasing on $(0,\infty)$. 
Therefore, without loss of generality, we may additionally assume that $\phi$ is differentiable on $(0,\infty)$, and that
\begin{equation}\label{decq}
\frac{\phi(t_2)}{t_2^q} \le \frac{\phi(t_1)}{t_1^q}
\quad \text{for all }\ 0<t_1<t_2<\infty, 
\end{equation}
where $q = q(p,s,\tilde{s},L)$ is a fixed constant, in particular satisfying $q>pL$ 
(see \eqref{crhorange} below).

\textit{Step 2: A Riesz-type potential estimate. } 
Define 
\begin{equation}\label{def:eta}
\tilde \eta(x) \coloneqq \left(\frac{\phi(|x|)}{|x|^n} - \frac{|x|^{q-n}\phi(\rho)}{\rho^{q}} \right) \chi_{B_\rho}(x)
\quad \text{and}\quad
 \eta(x) \coloneqq \frac{c_\rho}{\Phi(\rho) \omega_n} \tilde \eta(x),
\end{equation}
where  $c_\rho \coloneqq  \Phi(\rho)\omega_n /\|\tilde \eta\|_{L^1(B_\rho)}$.
Note that $\eta,\tilde\eta\ge 0$ by \eqref{decq}, $\eta \in W^{1,1}(\R^n \setminus \{0\})$ and
$\|\eta\|_{L^1(\R^n)} =  \|\eta\|_{L^1(B_\rho)} = 1$.
Moreover, 
\[
\int_{B_\rho}\tilde\eta(x)\,dx =\omega_n \Phi(\rho) -  \omega_n\frac{\phi(\rho)}{\rho^{q}} \int^\rho_0 \tau^{q-1}\, d\tau =\omega_n \Phi(\rho) -  \tfrac{1}{q}\omega_n \phi(\rho)\,,
\]
which together with \eqref{phi.Phi} implies that
\[
 \left(1-\tfrac{pL}{q}\right)\omega_n \Phi(\rho) \le  \int_{B_\rho}\tilde\eta(x)\,dx \le \omega_n\Phi(\rho)
\]
and hence 
\begin{equation}\label{crhorange}
c_\rho  \in \left[1,\frac{q}{q-pL}\right].
\end{equation}
Denote by $\eta_t(x) \coloneqq t^{-n} \eta(x/t)$ for $t>0$. Then $\|\eta_t\|_{L^{1}(\R^n)}= \| \eta_t \|_{L^{1}(B_{t\rho})} =1$ and $\eta_1=\eta$. 

In this step, we show that for every Lebesgue point $x\in B_{r}=B_{2\rho}$  of $v$,
\begin{equation} \label{eq:Rieszpotential}
\begin{aligned}
&\bigg|v(x) - \mean{B_{\rho}} (v* \eta)(y)\,dy\bigg| \le \mean{B_{\rho}} |v(x) - (v* \eta)(y)|\,dy \\
&\hspace{2cm}  \le c r^{1-s} \int_{B_r}\left[\frac{1}{\Phi(r)}\int_{B_r }   \frac{|v(\xi) - v(z)|^{p}}{|\xi-z|^{p}}\frac{\phi(|\xi-z|)}{|\xi-z|^{n}}\,d\xi \right]^{1/p} \frac{1}{|x-z|^{n-s}} \,dz.
\end{aligned}\end{equation}
We split the integral into
\begin{equation*}
\begin{aligned}
\mean{B_{\rho}} |v(x) - (v* \eta)(y)|\,dy
&\leq \mean{B_{\rho}}|v(x) - (v* \eta_{\frac{|x-y|}{4\rho}})(y)|\,dy  +\mean{B_{\rho}}|v(y) - (v* \eta_{\frac{|x-y|}{4\rho}})(y)|\,dy \\
&\quad\; +\mean{B_{\rho}}|v(y) - (v* \eta)(y)|\,dy\\
& \eqqcolon I_{1} + I_{2} + I_{3}.
\end{aligned}
\end{equation*}
We start by estimating $\mathrm{I}$. Since $x$ is a Lebesgue point of $v$, we can calculate
\begin{equation}\label{I.1st}
\begin{aligned}
& v(x) - (v*\eta_{\frac{|x-y|}{4\rho}})(y) \\
&= -\int_0^1 \frac{\partial}{\partial t} \left[(v * \eta_{t\frac{|x-y|}{4\rho}})\big(x + t(y-x)\big)\right] \,dt \\
&= \int_0^1 \frac{\partial }{\partial t} \left[\int_{\R^n} \big\{v\big(x+t(y-x)\big)- v(z)  \big\} \eta_{t\frac{|x-y|}{4\rho}}\big(x+t(y-x)-z\big)  \,dz\right] \,dt \\
&= \int_0^1  \int_{B_{2\rho}} \big\{v\big(x+t(y-x)\big) - v(z) \big\} \frac{\partial}{\partial t}  \left[\eta_{t\frac{|x-y|}{4\rho}}\big(x+t(y-x)-z\big)\right]  \,dz \,dt,
\end{aligned}
\end{equation}
where in the second step we have used the facts that 
\[  \int_{\R^n} \frac{\partial}{\partial t} \left[\eta_{t\frac{|x-y|}{4\rho}}(x+t(y-x)-z)\right] \,dz = \frac{\partial}{\partial t} \int_{\R^n}  \eta_{t\frac{|x-y|}{4\rho}}(x+t(y-x)-z) \,dz= \frac{\partial}{\partial t} 1 =0 \] 
and that $\mathrm{supp}\,  \eta_{t\frac{|x-y|}{4\rho}}(x+t(y-x)-\cdot) \subset B_{2\rho}$ for any $x\in B_{2\rho}$ and $y\in B_{\rho}$, since 
\[ |x+t(y-x)-z|\le t\tfrac{|x-y|}{4} \;\; \Rightarrow \;\; |z| \le t|y| + (1-t)|x|+ t\tfrac{|x-y|}{4} \le t\rho + (1-t)2\rho + t \tfrac{3}{4}\rho\le 2\rho. \]
Now, a direct calculation gives
\begin{equation*}
\begin{aligned}
&\frac{\partial}{\partial t} \left[ \eta_{t \frac{|x-y|}{4\rho}}\big(x+t(y-x)-z\big) \right] \\
& = \frac{\partial}{\partial t}  \left[ \left(\frac{|x-y|}{4\rho}t\right)^{-n} \eta\left(4\rho\frac{x+t(y-x)-z}{t |x-y|}\right) \right] \\
& = -\frac{n}{t} \eta_{t \frac{|x-y|}{4\rho}}\big(x+t(y-x)-z\big) - \left( \frac{|x-y|}{4\rho} t \right)^{-n} D\eta \left(4\rho \frac{x+t(y-x)-z}{t|x-y|}\right) \cdot \frac{4\rho (x-z)}{|x-y|t^2}.
\end{aligned}
\end{equation*}
Note that by \eqref{crhorange} and \eqref{decq} with $|x|< 2\rho$,
\[\begin{split} 
|D\eta(x)| 
&=\frac{c_\rho}{\omega_n \Phi(\rho)} \left| \frac{\phi'(|x|)|x|-n\phi(|x|)}{|x|^{n+1}}- (q-n) \frac{|x|^{q-n-1}\phi(\rho)}{\rho^{q}} \right|  \chi_{B_\rho}(x) \\
&\lesssim \frac{1}{\Phi(\rho)}\left\{\frac{\phi(|x|)}{|x|^{n+1}}+\frac{|x|^{q-n-1}\phi(2\rho)}{(2\rho)^{q}} \right\} \chi_{B_\rho}(x) \\
&\lesssim \frac{1}{\Phi(\rho)}\frac{\phi(|x|)}{|x|^{n+1}} \chi_{B_\rho}(x),
\end{split}\] 
where we used that fact that $\phi'(t) t \le q \phi(t)$ by \eqref{decq}.
Thus, writing $[x,y]_t  \coloneqq x+t(y-x)$, we estimate
\begin{equation*}
\begin{aligned}
\left|\frac{\partial }{\partial t}  \left[\eta_{t\frac{|x-y|}{4\rho}}\big([x,y]_t-z\big)\right] \right| 
&\lesssim \frac{1}{\Phi(\rho)} \chi_{\left\{|[x,y]_t-z| \leq t \frac{|x-y|}{4}\right\}} \Bigg\{ t^{-1} |[x,y]_t-z|^{-n} \phi\left(4\rho \frac{|[x,y]_t-z|}{t|x-y|}\right) \\
&\qquad  \qquad  \qquad  \qquad   + |[x,y]_t-z|^{-n-1} \phi\left(4\rho \frac{|[x,y]_t-z|}{t|x-y|}\right)\frac{ |x-z|}{t} \Bigg\}.
\end{aligned}
\end{equation*}
Note that if $|[x,y]_t-z| \leq  t \frac{|x-y|}{4}$, then
\begin{align*}
|x-z| \geq |x-[x,y]_t|-|[x,y]_t-z|=t |x-y| - |[x,y]_t-z|\geq 3|[x,y]_t-z|.
\end{align*}
This and the previous estimate imply
\begin{align*}
\left|\frac{\partial }{\partial t}  \eta_{t\frac{|x-y|}{4}}\big([x,y]_t-z\big)\right| \lesssim \frac{1}{\Phi(\rho)}\chi_{\left\{|[x,y]_t-z| \leq t \frac{|x-y|}{4}\right\}} \frac{|x-z|}{t |[x,y]_t-z|^{n+1}} \phi\left(4\rho \frac{|[x,y]_t-z|}{t|x-y|}\right).
\end{align*}
Plugging this into \eqref{I.1st}, we obtain
\begin{equation*}
\begin{aligned}
& |v(x) - (v*\eta_{\frac{|x-y|}{4\rho}})(y)|  \\
&\lesssim \frac{1}{\Phi(\rho)}\int_0^1\int_{B_{2\rho}}\chi_{\left\{|[x,y]_t-z| \leq t \frac{|x-y|}{4}\right\}}\frac{ |v([x,y]_t)-v(z)|\, |x-z|}{t |[x,y]_t-z|^{n+1}} \phi\left(4\rho \frac{|[x,y]_t-z|}{t|x-y|}\right)\,dz\,dt .
\end{aligned}
\end{equation*}
Hence, using additionally Fubini's theorem, we arrive at
\begin{equation*}
I_{1} \lesssim \frac{1}{\Phi(\rho)}\mean{B_{2\rho}}  \int_0^1 \int_{B_{\rho}} \chi_{\left\{|[x,y]_t-z| \leq t \frac{|x-y|}{4}\right\}} \frac{|v([x,y]_t)-v(z)|\, |x-z|}{t|[x,y]_t-z|^{n+1}} \phi\left(4\rho\frac{|[x,y]_t-z|}{t|x-y|}\right) \,dy \,dt\, dz.
\end{equation*}
We now substitute $\xi= x+t(y-x)=[x,y]_t$ in the above integral. Then $dy=t^{-n}d\xi$ and $t=\frac{|\xi-x|}{|y-x|}\geq \frac{|\xi-x|}{3\rho}$. Using the fact that
 \begin{equation}\label{equiv0}
|\xi-z| \leq \tfrac{1}{4}  |\xi -x|\ \  \Longrightarrow \ \  \tfrac{3}{4} |x-\xi|\leq |x-z| \leq \tfrac{5}{4} |x-\xi|
\end{equation}
and \eqref{ainc}, we get
\begin{equation*}
\begin{aligned}
I_{1} &\lesssim \frac{1}{\Phi(\rho)}\mean{B_{2\rho}}\int_{B_{2\rho}}\int_{\frac{|\xi-x|}{3\rho}}^1 \chi_{\left\{|\xi-z| \leq \frac{|\xi -x|}{4}\right\}}  t^{-n -1}\frac{|v(\xi)-v(z)|\, |x-z|}{|\xi-z|^{n+1}}  \phi\left(4\rho\frac{|\xi-z|}{|\xi-x|}\right) \,dt \, d\xi\,dz \\
&\lesssim \frac{1}{\Phi(\rho)}\int_{B_{2\rho}}\int_{B_{2\rho}} \chi_{\left\{|\xi-z| \leq \frac{|\xi -x|}{4}\right\}}  \frac{|v(\xi)-v(z)|\,|x-z|}{|\xi-z|^{n+1}|\xi-x|^n}  \phi \left(4\rho \frac{|\xi-z|}{|\xi-x|}\right) \, d \xi\,dz \\
&\lesssim \frac{1}{\Phi(\rho)}\int_{B_{2\rho}}\int_{B_{3\rho}} \chi_{\{|\xi|< 2\rho\}} \chi_{\left\{|\xi-z| \leq \frac{|x-z|}{3}\right\}}  \frac{|v(\xi)-v(z)|\,|x-z|}{|\xi-z|^{n+1}|\xi-x|^n}  \phi \left(5\rho \frac{|\xi-z|}{|x-z|}\right) \, d \xi\,dz .
\end{aligned}
\end{equation*}
At this stage, we observe that, by substituting $\tilde\xi= \frac{5\rho}{|x-z|}(\xi-z)$ and using \eqref{adec},
\begin{equation*}
\begin{aligned}
\int_{B_{3\rho}}  \chi_{\left\{|\xi-z|  \leq \frac{|x-z|}{3}\right\}} \frac{1}{|\xi-z|^{n}}  \phi \left(5\rho \frac{|\xi-z|}{|x-z|}\right) \, d \xi
 & =  \int_{B_{5\rho/3}} \chi_{\left\{\left|\frac{|x-z|}{5\rho}\tilde \xi +z \right| < 3\rho\right\}}   \frac{\phi(|\tilde \xi|)}{|\tilde \xi|^{n}} \, d \tilde \xi \\
 &\le \omega_n \Phi\big(\tfrac 53 \rho \big)\le \omega_n \big(\tfrac53\big)^{(1-s)p} \Phi(\rho)
\end{aligned}
\end{equation*}
whenever $x\in B_{2\rho}$ and $z\in B_{2\rho}$. Moreover, since $z\in B_{2\rho}$ and $|x-z|\le 4\rho$, we have  
\[
\left\{\tilde\xi\in B_{5\rho/3}:\left|\tfrac{|x-z|}{5\rho}\tilde \xi +z \right| < 3\rho\right\} \supset \left\{\tilde\xi\in B_{5\rho/3}:\tfrac{|x-z|}{5\rho}|\tilde \xi| < \rho\right\}\supset  B_{\rho}
\]
and hence 
\[
\int_{B_{3\rho}}  \chi_{\left\{|\xi-z|  \leq \frac{|x-z|}{3}\right\}} \frac{1}{|\xi-z|^{n}}  \phi \left(5\rho \frac{|\xi-z|}{|x-z|}\right) \, d \xi \ge  \int_{B_{\rho}} \frac{\phi(|\tilde \xi|)}{|\tilde \xi|^{n}} \, d \tilde \xi = \omega_n \Phi(\rho).
\]
Summarizing, we have
\begin{equation}\label{phi.measure}
\frac{1}{\Phi(\rho)}\int_{B_{3\rho}}  \chi_{\left\{|\xi-z|  \leq \frac{|x-z|}{3}\right\}} \frac{1}{|\xi-z|^{n}}  \phi \left(5\rho \frac{|\xi-z|}{|x-z|}\right) \, d \xi \approx 1.
\end{equation}
Therefore, by Jensen's inequality, we obtain
\[\begin{aligned}
I_{1} &\lesssim  \int_{B_{2\rho}}\left[ \frac{1}{\Phi(\rho)} \int_{B_{3\rho}}    \chi_{\{|\xi| < 2\rho\}}^p \frac{|v(\xi)-v(z)|^p|x-z|^p}{|\xi-z|^{p}|\xi-x|^{pn}} \frac{\chi_{\{|\xi-z| \leq |x-z|/3\}} }{|\xi-z|^{n}}  \phi \left(5\rho \frac{|\xi-z|}{|x-z|}\right) \, d \xi \right]^{1/p}\, dz\\
&=  \int_{B_{2\rho}}\left[ \frac{1}{\Phi(\rho)} \int_{B_{2\rho}}  \chi_{\left\{|\xi-z| \leq \frac{|x-z|}{3}\right\}}  \frac{|v(\xi)-v(z)|^p|x-z|^p}{|\xi-z|^{n+p}|\xi-x|^{pn}}   \phi \left(5\rho \frac{|\xi-z|}{|x-z|}\right) \, d \xi \right]^{1/p}\, dz.
\end{aligned}\]
Since $5\rho/|x-z| \ge 1$ for $x,z\in B_{2\rho}=B_r$, we use \eqref{adec} and the fact that
\begin{equation}\label{equiv1}
|\xi-z| \le \tfrac{1}{3}|x-z| \;\; \Longrightarrow \;\; \tfrac{2}{3}|x-z| \le |x-\xi| \le \tfrac{4}{3}|x-z|,
\end{equation}
thereby obtaining the following estimate for $I_{1}$:
\begin{equation*}
I_{1}  \lesssim \rho^{1-s} \int_{B_{2\rho}}\left[ \frac{1}{\Phi(\rho)} \int_{B_{2\rho}}  \frac{|v(\xi)-v(z)|^p}{|\xi-z|^{p}}  \frac{ \phi (|\xi-z|)}{|\xi-z|^n} \, d \xi \right]^{1/p}\frac{1}{|\xi-x|^{n-s}}\, dz.
\end{equation*}
Next, we estimate $I_{2}$. By using \eqref{def:eta} and \eqref{crhorange}, Fubini's theorem and \eqref{equiv0} with $\xi$ replaced by $y$, we have
\begin{equation*}
\begin{aligned}
I_{2}&\le \mean{B_{\rho}} \int_{\R^n}
| v(y)-v(z) | \eta_{ \frac{|x-y|}{4\rho}}(y-z)\,dz\,dy\\
&\lesssim \frac{1}{\Phi(\rho)}\mean{B_{2\rho}} \int_{B_{2\rho}}\chi_{\left\{|y-z|\leq \frac{|x-y|}{4}\right\}}\frac{|v(y)-v(z)|}{ |y-z|^{n}}  \phi \left(4\rho \frac{|y-z|}{|x-y|}\right)\,dz\,dy \\
& \lesssim \frac{1}{\Phi(\rho)}\mean{B_{2\rho}}\int_{B_{3\rho}}\chi_{\{|y|<2\rho\}}\chi_{\left\{|y-z|\leq \frac{|x-z|}{3}\right\}}\frac{|v(y)-v(z)|}{ |y-z|^{n}} \phi \left(5\rho \frac{|y-z|}{|x-z|}\right)\,dy\,dz.
\end{aligned}
\end{equation*}
In view of \eqref{phi.measure}, we apply Jensen's inequality to the inner integral in the above display. Moreover, using \eqref{adec} and \eqref{equiv1} with $\xi$ replaced by $y$, 
we estimate $I_{2}$ as
\begin{equation*}
\begin{aligned}
I_{2}&\lesssim  \mean{B_{2\rho}}\left[\frac{1}{\Phi(\rho)}\int_{B_{2\rho}}\chi_{\left\{|y-z|\leq \frac{|x-z|}{3}\right\}}\frac{|v(y)-v(z)|^{p}}{|y-z|^{n}}\phi\left(5\rho\frac{|y-z|}{|x-z|}\right)\,dy\right]^{1/p}\,dz \\
& \lesssim \rho^{1-s}\mean{B_{2\rho}}\left[\frac{1}{\Phi(\rho)}\int_{B_{2\rho}}\chi_{\left\{|y-z|\leq\frac{|x-z|}{3}\right\}}\frac{|v(y)-v(z)|^{p}}{|x-z|^{p(1-s)}}\frac{\phi(|y-z|)}{|y-z|^{n}}\,dy\right]^{1/p}\,dz \\
&\lesssim \rho^{1-s} \int_{B_{2\rho}}\left[\frac{1}{\Phi(\rho)}\int_{B_{2\rho}} \frac{|v(y)-v(z)|^{p}}{|y-z|^{p}} \frac{\phi(|y-z|)}{|y-z|^{n}}\,dy\right]^{1/p}\frac{1}{|x-z|^{n-s}}\,dz.
\end{aligned}
\end{equation*}
Finally, we estimate $I_{3}$. Here we use \eqref{def:eta}, the inequality $|x-y|+|y-z|<8\rho$ for any $x,y,z \in B_{2\rho}$ and Jensen's inequality in order to have
\begin{equation*}
\begin{aligned}
I_{3}&\le \mean{B_{\rho}} \int_{\R^n} |v(y)-v(z)| \eta(y-z)\,dz\,dy\\
&\lesssim \frac{1}{\Phi(\rho)} \mean{B_{2\rho}}\int_{B_{2\rho}}|v(y)-v(z)| \frac{\phi(|y-z|)}{|y-z|^{n}}\,dz\,dy \\
&\lesssim \frac{\rho^{1-s}}{\Phi(\rho)} \int_{B_{2\rho}}\left[\int_{B_{2\rho}}\frac{|v(y)-v(z)|}{|y-z|} \frac{\phi(|y-z|)}{|y-z|^{n}}\,dz\right]\frac{1}{|x-y|^{n-s}}\,dy \\
&\lesssim \rho^{1-s} \int_{B_{2\rho}} \left[\frac{1}{\Phi(\rho)}\int_{B_{2\rho}} \frac{|v(y)-v(z)|^{p}}{|y-z|^p}\frac{\phi(|y-z|) }{|y-z|^{n}}\,dz\right]^{1/p}\frac{1}{|x-y|^{n-s}}\,dy.
\end{aligned}
\end{equation*}
Therefore, combining the estimates found for $I_{1}$, $I_{2}$ and $I_{3}$, we obtain  \eqref{eq:Rieszpotential}.

\textit{Step 3: Proof of \eqref{Sobopoin}. } 
We recall the following Hardy-Littlewood-Sobolev inequality \cite{HarLi28,Hedberg,Sobolev}:
\[
\left(\int_{\mathbb{R}^{n}} \left[\int_{\mathbb{R}^n} \frac{|f(y)|}{|y-x|^{n-s}}\,dy \right]^{p^{*}_{s}}\,dx \right)^{1/p^{*}_{s}} \le \frac{c}{s(n-s p)^{1-1/p}} \left(\int_{\mathbb{R}^{n}} |f(x)|^{p} \,dx\right)^{1/p}
\]
for any $f \in L^{p}(\mathbb{R}^{n})$ with $s p <n $. 
Applying this inequality to
\begin{equation*}
f(y)=\left(\int_{B_r } \frac{|v(y) - v(z)|^p}{|y-z|^{p}}\frac{\phi(|y-z|)}{|y-z|^n}\, dz\right)^{1/p}\chi_{B_{r}}(y),
\end{equation*}
and then combining the resulting estimate with  \eqref{eq:Rieszpotential}, we have 
\begin{equation*}
\begin{aligned}
&\left(\mean{B_r} |v(x)-(v)_{B_r}|^{p^{*}_{s}}\, dx\right)^{p/p^{*}_{s}} \\
& \le c\left(\mean{B_{r}}\bigg|v(x)-\mean{B_{\rho}}(v*\eta)(y)\,dy\bigg|^{p^{*}_{s}}\,dx\right)^{p/p^{*}_{s}} \\
&\le c\frac{r^{(1-s)p}}{\Phi(r)}\left\{\mean{B_r} \left[\int_{B_{r}}\left( \int_{B_r }   \frac{|v(y) - v(z)|^{p}}{|y-z|^{p}}\frac{  \phi(|y-z|)}{|y-z|^n}\, dz\right)^{1/p} \frac{1}{|y-x|^{n-s}}\,dy \right]^{p^{*}_{s}}\,dx \right\}^{p/p^{*}_{s}}  \\
&\le \frac{c}{s^{p}(n-s p)^{p-1}}\frac{ r^{p}}{\Phi(r)}  \mean{B_r}  \int_{B_r }   \frac{|v(x) - v(z)|^p}{|x-z|^p} \frac{\phi(|x-z|)}{|x-z|^n} \,  dz \, dx .
\end{aligned}
\end{equation*}
This implies \eqref{Sobopoin}. 
\end{proof}

\begin{remark}
From Step 1 in the proof of the above theorem, 
we can also obtain a Riesz-type potential estimate for $\big|v(x) - (v)_{B_r}\big|$ for $x\in B_r$. We first observe by \eqref{eq:Rieszpotential} and Fubini's theorem that for every $x\in B_r$,
\begin{equation*}\begin{aligned}
&\mean{B_{r}} \bigg|v(z) - \mean{B_{\rho}} (v* \eta)(y)\,dy\bigg| \, dz \\
&\le c r^{1-s}\mean{B_{r}} \int_{B_r}\left[\frac{1}{\Phi(r)}\int_{B_r}   \frac{|v(\xi) - v(y)|^{p}}{|\xi-y|^{p}}\frac{\phi(|\xi-y|)}{|\xi-y|^{n}}\,d\xi \right]^{1/p} \frac{1}{|z-y|^{n-s}} \,dy \, dz\\
&\le \frac{c r}{s}\mean{B_{r}} \left[\frac{1}{\Phi(r)}\int_{B_r}   \frac{|v(\xi) - v(y)|^{p}}{|\xi-y|^{p}}\frac{\phi(|\xi-y|)}{|\xi-y|^{n}}\,d\xi \right]^{1/p}  \,dy\\
&\le \frac{c r^{1-s}}{s}\int_{B_{r}} \left[\frac{1}{\Phi(r)}\int_{B_r}   \frac{|v(\xi) - v(y)|^{p}}{|\xi-y|^{p}}\frac{\phi(|\xi-y|)}{|\xi-y|^{n}}\,d\xi \right]^{1/p} \frac{1}{|x-z|^{n-s}} \,dy,
\end{aligned}\end{equation*}
where we used the fact that $|x-z|^{n-s}\le (2r)^{n-s}$.
Then, again using  \eqref{eq:Rieszpotential}, we obtain for almost every $x\in B_r$ that 
\begin{equation*}
\begin{aligned}
\big|v(x) - (v)_{B_r}\big| & \le \bigg|v(x) - \mean{B_{\rho}} (v* \eta)(y)\,dy\bigg| + \mean{B_{r}} \bigg|v(z) - \mean{B_{\rho}} (v* \eta)(y)\,dy\bigg| \, dz \\
&\le \frac{c r^{1-s}}{s}\int_{B_{r}} \left[\frac{1}{\Phi(r)}\int_{B_r}   \frac{|v(\xi) - v(y)|^{p}}{|\xi-y|^{p}}\frac{\phi(|\xi-y|)}{|\xi-y|^{n}}\,d\xi \right]^{1/p} \frac{1}{|x-y|^{n-s}} \,dy .
\end{aligned}
\end{equation*}
We further note that this last estimate, as well as  \eqref{eq:Rieszpotential}, continues to hold when $s p \ge n$ or $p=1$ (see also the proof of \cite[Lemma~A.1]{BDOR}). In particular, when $p=1$, it gives an analog of the following classical estimate:
\[ \big|v(x)-(v)_{B_{r}}\big| \le c\int_{B_{r}}\frac{|Dv(y)|}{|x-y|^{n-1}}\,dy. \]
\end{remark}

We extend Theorem~\ref{thm:SoboPoin} to the case $s p \ge n$.
\begin{corollary}\label{cor.SP}
Let $s \in (0,1)$ and $p\in (1,\infty)$ with $s p \ge  n$, and let $\phi : [0,\infty)\to [0,\infty)$ be a nondecreasing function that satisfies \eqref{Dini} and \eqref{adec}.  
For every $\tilde p \in (p,\infty)$, we have
\begin{equation*}
\left(\mean{B_r} |v-(v)_{B_r}|^{\tilde p}\,dx\right)^{p/\tilde p} \le c \frac{\tilde p^{2p-1}}{(\tilde p-p)^{p}}\frac{r^p}{\Phi(r)}\mean{B_r} \int_{B_r} \frac{|v(x)-v(y)|^p}{|x-y|^p} \frac{\phi(|x-y|)}{|x-y|^n} \,dy\,dx
\end{equation*}
for any $v\in W^{\phi,p}(B_r)$ with $B_r \subset \mathbb{R}^{n}$, where $c = c(n,p,L)>0$. 
\end{corollary}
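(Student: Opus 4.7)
The plan is to reduce Corollary~\ref{cor.SP} to Theorem~\ref{thm:SoboPoin} by replacing the original $s$ with a smaller exponent $\tilde{s}$ chosen so that $\tilde{s}p<n$ and $p^{*}_{\tilde{s}}=\tilde{p}$. A preliminary observation: the hypothesis $sp\ge n$ together with $s\in(0,1)$ forces $p>n$, so any $\tilde{s}\in(0,n/p)$ automatically lies in $(0,1)$.

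Given $\tilde{p}\in(p,\infty)$, I would set
\[
\tilde{s}\coloneqq \frac{n(\tilde{p}-p)}{p\tilde{p}} = \frac{n}{p}-\frac{n}{\tilde{p}} \in \left(0,\tfrac{n}{p}\right).
\]
By construction $p^{*}_{\tilde{s}}=np/(n-\tilde{s}p)=\tilde{p}$, and $\tilde{s}<n/p\le s$ because $sp\ge n$. Applying Remark~\ref{rmk:adec} with $q=(1-\tilde{s})p>(1-s)p$, we conclude that $\phi$ satisfies \eqref{adec} with the exponent $\tilde{s}$ in place of $s$ and the same constant $L$; the Dini condition \eqref{Dini} is untouched. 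Therefore all the hypotheses of Theorem~\ref{thm:SoboPoin} hold for the triple $(\tilde{s},p,\phi)$.

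Plugging $\tilde{s}$ into that theorem yields
\[
\left(\mean{B_r} |v-(v)_{B_r}|^{\tilde p}\,dx\right)^{p/\tilde p} \le \frac{c(n,p,L)}{\tilde{s}^{p}(n-\tilde{s}p)^{p-1}}\, \frac{r^p}{\Phi(r)}\mean{B_r}\int_{B_r} \frac{|v(x)-v(y)|^{p}}{|x-y|^{p}} \frac{\phi(|x-y|)}{|x-y|^{n}}\,dy\,dx,
\]
and a direct computation using $n-\tilde{s}p=np/\tilde{p}$ gives
\[
\tilde{s}^{p}(n-\tilde{s}p)^{p-1} = \frac{n^{p}(\tilde{p}-p)^{p}}{p^{p}\tilde{p}^{p}}\cdot \frac{n^{p-1}p^{p-1}}{\tilde{p}^{p-1}} = \frac{n^{2p-1}}{p}\cdot\frac{(\tilde{p}-p)^{p}}{\tilde{p}^{2p-1}}.
\]
Hence the prefactor is bounded by $c(n,p,L)\,\tilde{p}^{2p-1}/(\tilde{p}-p)^{p}$, which is exactly the claimed dependence.

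The argument is essentially a parameter change, so no genuine obstacle arises. The only point that requires some care is ensuring that the choice of $\tilde{s}$ simultaneously (i) keeps us inside the range $(0,1)$ — which is where the assumption $sp\ge n$ (equivalently $p>n$) enters, (ii) realises $p^{*}_{\tilde{s}}=\tilde{p}$, and (iii) produces, after the explicit substitution, exactly the combinatorial factor $\tilde{p}^{2p-1}/(\tilde{p}-p)^{p}$ stated in the corollary.
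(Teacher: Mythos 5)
Your proof is correct and follows essentially the same route as the paper: pick the smaller exponent $\tilde{s} = n/p - n/\tilde{p} < n/p \le s$ so that $p^*_{\tilde{s}} = \tilde{p}$, observe via Remark~\ref{rmk:adec} that \eqref{adec} still holds for $\tilde{s}$, and apply Theorem~\ref{thm:SoboPoin}. The explicit computation of $\tilde{s}^{p}(n-\tilde{s}p)^{p-1}$, which the paper leaves implicit, correctly produces the factor $\tilde{p}^{2p-1}/(\tilde{p}-p)^{p}$ after absorbing powers of $n$ and $p$ into $c(n,p,L)$.
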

\begin{proof}
Choose $0<s_0<n/p\le s$ such that $\tilde p= p_{s_0}^*=np/(n-s_0 p)$. Then, since $\phi$ also satisfies \eqref{adec} with $s$ replaced by $s_0$, by Theorem~\ref{thm:SoboPoin},
\[\begin{aligned}
\left(\mean{B_r} |v-(v)_{B_r}|^{\tilde p}\,dx\right)^{p/\tilde p} 
&=\left(\mean{B_r} |v-(v)_{B_r}|^{p^{*}_{s_0}}\,dx\right)^{p/p^{*}_{s_0}} \\
&\le \frac{c}{s_0^p(n-s_0 p)^{p-1}} \frac{r^p}{\Phi(r)}\mean{B_r} \int_{B_r} \frac{|v(x)-v(y)|^p}{|x-y|^p} \frac{\phi(|x-y|)}{|x-y|^n}\,dy\,dx, 
\end{aligned}\]
which implies the desired estimate.
\end{proof}

\section{Fundamental estimates}\label{sec4}
We start this section with two fundamental estimates for \eqref{mainPDE}. 
The first one is a Caccioppoli estimate with tail.

\begin{lemma}\label{lem.ccp}
Let $u \in \mathbb{W}^{\phi,p}(\Omega)$ be a weak solution to \eqref{mainPDE} under assumptions \eqref{kernel}, \eqref{Dini}, \eqref{adec} and \eqref{ainc}, and let $B_{r}\equiv B_{r}(x_{0}) \subset \Omega$ be a ball. Then, with $w_{\pm} \coloneqq (u-k)_{\pm}$ for any $k\in\mathbb{R}$, we have
\begin{equation}\label{ccp.est}
\begin{aligned}
& \int_{B_{\rho}}\int_{B_{\rho}}\frac{|w_{\pm}(x)-w_{\pm}(y)|^{p}}{|x-y|^{p}}\frac{\phi(|x-y|)}{|x-y|^{n}}\,dx\,dy \\
& \le c\frac{\Phi(r)}{(r-\rho)^{p}}\int_{B_{r}}w_{\pm}^{p}\,dx + c\left(\frac{r}{r-\rho}\right)^{n+\tilde{s}p} \frac{\Phi(r)}{r^{p}}[\tail(w_{\pm};r)]^{p-1}\int_{B_{r}}w_{\pm}\,dx
\end{aligned}
\end{equation}
whenever $\rho \in (0,r)$, where $c = c(n,p,\tilde{s},\Lambda,L)>0$. 
\end{lemma}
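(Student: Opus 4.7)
The plan is the standard nonlocal Caccioppoli argument: test the weak formulation of the subsolution inequality against $\zeta \coloneqq w_+\tau^{p}$ (and against $w_-\tau^{p}$, using the supersolution inequality, for the $w_-$ case), where $\tau \in C^{\infty}_{c}(B_{(r+\rho)/2})$ is a cutoff with $\tau \equiv 1$ on $B_\rho$, $0\le\tau\le 1$ and $|D\tau|\le 4/(r-\rho)$. Since $\zeta$ is Lipschitz and compactly supported in $B_r\subset\Omega$, Proposition~\ref{prop:embedding}(ii) gives $\zeta\in\mathbb{W}^{\phi,p}(\Omega)$ with $\zeta\equiv 0$ in $\mathbb R^{n}\setminus\Omega$, so it is admissible. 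Because $\zeta\equiv 0$ off $B_{(r+\rho)/2}$, the $(\mathbb R^n\setminus B_r)\times(\mathbb R^n\setminus B_r)$ portion of $\mathcal C_\Omega$ contributes nothing, and by symmetry the weak form reduces to
\[
0 \;=\; \iint_{B_r\times B_r}\Psi(x,y)\,K(x,y)\,dx\,dy \;+\; 2\int_{B_{(r+\rho)/2}}\!\int_{\mathbb R^n\setminus B_r}|u(x)-u(y)|^{p-2}(u(x)-u(y))\,w_+(x)\tau^p(x)\,K(x,y)\,dy\,dx,
\]
where $\Psi(x,y)\coloneqq|u(x)-u(y)|^{p-2}(u(x)-u(y))\bigl(w_+(x)\tau^p(x)-w_+(y)\tau^p(y)\bigr)$.

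For the local integral I would apply the pointwise inequality
\[
\Psi(x,y) \;\ge\; c^{-1}|w_+(x)\tau(x)-w_+(y)\tau(y)|^{p} \;-\; c\,(w_+(x)+w_+(y))^{p}|\tau(x)-\tau(y)|^{p},
\]
proved exactly as in \cite[Lemma~3.1]{DKP14} by a case analysis on the signs of $u(x)-k$ and $u(y)-k$. Integrating against $K$ and using $\tau\equiv 1$ on $B_\rho$ together with the lower bound in \eqref{kernel}, the positive summand dominates the desired left-hand side of \eqref{ccp.est}. For the error summand, the Lipschitz bound on $\tau$, the upper bound in \eqref{kernel} and Fubini give
\[
\frac{c}{(r-\rho)^{p}}\int_{B_r}w_+^{p}(x)\int_{B_{2r}(x)}\frac{\phi(|x-y|)}{|x-y|^{n}}\,dy\,dx \;=\; \frac{c\,\omega_n\Phi(2r)}{(r-\rho)^{p}}\int_{B_r}w_+^{p}\,dx \;\le\; \frac{c\,\Phi(r)}{(r-\rho)^{p}}\int_{B_r}w_+^{p}\,dx,
\]
after using \eqref{adecequiv} to swallow $\Phi(2r)$. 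This is the first term of \eqref{ccp.est}.

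For the cross term, on $\{w_+(x)>0\}$ one has $u(x)\ge k$, and when $u(y)>u(x)$ the estimate $u(y)-u(x)\le(u(y)-k)_+=w_+(y)$ yields the pointwise lower bound $-w_+(y)^{p-1}w_+(x)\tau^p(x)K(x,y)$ on the integrand. A direct geometric calculation shows that for $x\in B_{(r+\rho)/2}$ and $y\in\mathbb R^n\setminus B_r$ one has $|y-x_0|\le \tfrac{2r}{r-\rho}|y-x|$, so \eqref{adecequiv} and \eqref{kernel} imply
\[
K(x,y) \;\le\; c\!\left(\frac{r}{r-\rho}\right)^{n+p}\!\frac{\phi(|y-x_0|)}{|y-x_0|^{n+p}}.
\]
Plugging this bound into the cross term, splitting the integrals in $x$ and $y$, and recognizing the definition \eqref{def.tail} of $\tail(w_+;x_0,r)$ reproduces the second term on the right-hand side of \eqref{ccp.est}; rearranging finishes the $w_+$ case, and the $w_-$ case is symmetric.

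The main obstacle is the algebraic inequality used in Step~2: because $w_+$ is a one-sided truncation, the mixed-sign cases $u(x)\ge k>u(y)$ and $u(y)\ge k>u(x)$ require separate verification to make sure the remainder is absorbed by $(w_+(x)+w_+(y))^{p}|\tau(x)-\tau(y)|^{p}$. The second delicate point is the bookkeeping of $\phi$-factors needed for sharpness and $s$-stability: in the local error the combination $\int_{B_{2r}(x)}\phi(|x-y|)/|x-y|^{n}\,dy\approx\Phi(r)$ is crucial for producing $\Phi(r)/(r-\rho)^{p}$, while the tail error only requires \eqref{adec} to replace $\phi(|y-x|)/|y-x|^{n+p}$ by $\phi(|y-x_0|)/|y-x_0|^{n+p}$ up to the geometric factor $(r/(r-\rho))^{n+p}$.
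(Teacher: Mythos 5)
Your proposal is correct and follows essentially the same approach as the paper: test with the cutoff-multiplied truncation $w_{\pm}\tau^{p}$, invoke the DKP pointwise algebraic inequality to split off a local error and a nonlocal cross term, compute the local error via $\int_{B_{2r}(x)}\phi(|x-y|)/|x-y|^{n}\,dy=\omega_{n}\Phi(2r)\lesssim\Phi(r)$, and control the cross term through the geometric bound $|x-y|\ge\tfrac{r-\rho}{2r}|y-x_{0}|$ combined with monotonicity of $\phi$ and almost-decreasing of $\phi(t)/t^{n+p}$. The only cosmetic differences are that you write $(w_{+}(x)+w_{+}(y))^{p}$ where the paper uses $(\max\{w_{\pm}(x),w_{\pm}(y)\})^{p}$ (equivalent up to $2^{p}$), and that you credit DKP14 Lemma 3.1 where the paper points to DKP16 Theorem 1.3.
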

\begin{proof}
Let $\eta \in C^{\infty}_{0}(B_{(\rho+r)/2})$ be a cut-off function satisfying $\eta \equiv 1$ in $B_{\rho}$ and $|D\eta| \le 4/(r-\rho)$. 
Testing \eqref{mainPDE} with $\eta^{p}(u-k)_{\pm}$ and following the proof of \cite[Theorem~1.3]{DKP16}, we have
\begin{equation*}
\begin{aligned}
& \int_{B_{r}}\int_{B_{r}}|w_{\pm}(x)\eta(x)-w_{\pm}(y)\eta(y)|^{p}K(x,y)\,dx\,dy \\
& \le c\int_{B_{r}}\int_{B_{r}}(\max\{w_{\pm}(x),w_{\pm}(y)\})^{p}|\eta(x)-\eta(y)|^{p}K(x,y)\,dx\,dy \\
& \quad + c\int_{B_{r}}w_{\pm}(x)\eta^{p}(x)\,dx \left(\sup_{x\in\supp\eta}\int_{\mathbb{R}^{n}\setminus B_{r}}w_{\pm}^{p-1}(y)K(x,y)\,dy\right)
\end{aligned}
\end{equation*}
for a constant $c= c(p)>0$. By symmetry, \eqref{Dini} and the fact that $\Phi(2r) \le 2^{p}L \Phi(r)$, the first term in the right-hand side is estimated as
\begin{equation*}
\begin{aligned}
& \int_{B_{r}}\int_{B_{r}}(\max\{w_{\pm}(x),w_{\pm}(y)\})^{p}|\eta(x)-\eta(y)|^{p}K(x,y)\,dx\,dy \\
& \le c\int_{B_{r}}\int_{B_{r}}w_{\pm}^{p}(x)|\eta(x)-\eta(y)|^{p}\frac{\phi(|x-y|)}{|x-y|^{n+p}}\,dx\,dy \\
& \le \frac{c}{(r-\rho)^{p}}\int_{B_{r}}w_{\pm}^{p}(x)\left(\int_{B_{2r}(x)}\frac{\phi(|x-y|)}{|x-y|^{n}}\,dy\right)\,dx \\
& \le c\frac{\Phi(r)}{(r-\rho)^{p}}\int_{B_{r}}w_{\pm}^{p}\,dx,
\end{aligned}
\end{equation*}
where $c= c(n,p,\Lambda,L)>0$. 
To estimate the second term, we observe the facts that $\phi(t)/t^{n+p}$ is almost decreasing for $t\in(0,\infty)$ with constant $L$ and that
\[ |x-y| \ge |y-x_{0}| - |x-x_{0}| \ge |y-x_{0}| - \frac{r+\rho}{2}\frac{|y-x_{0}|}{r} = \frac{r-\rho}{2r}|y-x_{0}| \]
for any $x \in B_{(\rho+r)/2}$ and $y \in \mathbb{R}^{n}\setminus B_{r}$, which along with \eqref{ainc} imply
\[ \frac{\phi(|x-y|)}{|x-y|^{n+p}} \le \left(\frac{2r}{r-\rho}\right)^{n+p}\frac{L}{|y-x_{0}|^{n+p}}\phi\left(\frac{r-\rho}{2r}|y-x_{0}|\right) \le c\left(\frac{r}{r-\rho}\right)^{n+\tilde{s}p}\frac{\phi(|y-x_{0}|)}{|y-x_{0}|^{n+p}}. \]
We thus have
\begin{equation*}
\begin{aligned}
& \int_{B_{r}}w_{\pm}(x)\eta^{p}(x)\,dx \left(\sup_{x\in\supp\eta}\int_{\mathbb{R}^{n}\setminus B_{r}}w_{\pm}^{p-1}(y)K(x,y)\,dy\right) \\
& \le c\int_{B_{r}}w_{\pm}(x)\eta^{p}(x)\,dx \left(\sup_{x\in\supp\eta}\int_{\mathbb{R}^{n}\setminus B_{r}}w_{\pm}^{p-1}(y)\frac{\phi(|x-y|)}{|x-y|^{n+p}}\,dy\right) \\
& \le c\left(\frac{r}{r-\rho}\right)^{n+\tilde{s}p}\int_{B_{r}}w_{\pm}(x)\,dx \cdot \int_{\mathbb{R}^{n}\setminus B_{r}}w_{\pm}^{p-1}(y)\frac{\phi(|y-x_{0}|)}{|y-x_{0}|^{n+p}}\,dy \\
& = c\left(\frac{r}{r-\rho}\right)^{n+\tilde{s}p}\frac{\Phi(r)}{r^{p}}[\tail(w_{\pm};r)]^{p-1}\int_{B_{r}}w_{\pm}\,dx
\end{aligned}
\end{equation*}
for some $c= c(n,p,\Lambda,L)>0$. Therefore, we conclude with \eqref{ccp.est}.
\end{proof}

\begin{remark}
Estimate \eqref{ccp.est} continues to hold for $w_{+}$ (resp. $w_{-}$) if $u$ is merely a weak subsolution (resp. supersolution) to \eqref{mainPDE}.
\end{remark}

The second estimate is a logarithmic estimate. 
\begin{lemma}\label{lem.log}
Let $u \in \mathbb{W}^{\phi,p}(\Omega)$ be a weak supersolution to \eqref{mainPDE} under assumptions \eqref{kernel}, \eqref{Dini}, \eqref{adec} and \eqref{ainc}, which is nonnegative in a ball $B_{R} \equiv B_{R}(x_{0})\subset \Omega$. Then for any $d>0$ and $r \in (0,R/2]$, we have
\begin{equation}\label{log.est}
\begin{aligned}
&  \int_{B_{r}}\int_{B_{r}}\frac{|\log(u(x)+d)-\log(u(y)+d)|^{p}}{|x-y|^{p}}\frac{\phi(|x-y|)}{|x-y|^{n}}\,dx\,dy \\
& \le cr^{n-p} \Phi(r) \left\{1+ d^{1-p}\frac{r^{p}}{\Phi(r)}\frac{\Phi(R)}{R^{p}}[\tail(u_{-};R)]^{p-1}\right\},
\end{aligned}
\end{equation}
where $c = c(n,p,s,\tilde{s},\Lambda,L)>0$.
\end{lemma}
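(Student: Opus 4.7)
The plan is to adapt the logarithmic-test-function strategy of Di Castro-Kuusi-Palatucci \cite[proof of Lemma~1.3]{DKP16} to the general kernel $K$, being careful to track the $\phi$-dependence so as to produce the stated prefactor $r^p\Phi(R)/(\Phi(r)R^p)$ in the tail. Since $r\le R/2$, $B_{3r/2}\subset B_R$. First, take a cutoff $\eta\in C_0^\infty(B_{3r/2}(x_0))$ with $\eta\equiv 1$ on $B_r(x_0)$, $0\le\eta\le 1$ and $|D\eta|\le 4/r$; set $\bar u\coloneqq u_++d$ (so $\bar u\ge d>0$ on $\mathbb{R}^n$ and $\bar u=u+d$ on $B_R$); and use $\zeta\coloneqq \eta^p\bar u^{1-p}$ as test function, which is nonnegative, bounded by $d^{1-p}$, supported in $B_{3r/2}\subset\Omega$, and lies in $\mathbb{W}^{\phi,p}(\Omega)$. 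Inserting $\zeta$ into the supersolution inequality and using symmetry of $K$ gives
\[
0\le\iint_{B_{3r/2}\times B_{3r/2}}A(x,y)K(x,y)\,dx\,dy+2\iint_{B_{3r/2}\times(\mathbb{R}^n\setminus B_{3r/2})}A(x,y)K(x,y)\,dx\,dy,
\]
where $A(x,y)\coloneqq |u(x)-u(y)|^{p-2}(u(x)-u(y))(\zeta(x)-\zeta(y))$.

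On $B_{3r/2}\times B_{3r/2}$ we have $u=\bar u-d$, so I apply the algebraic inequality
\[
|a-b|^{p-2}(a-b)\bigl(\tau_1^p a^{1-p}-\tau_2^p b^{1-p}\bigr)\le -c_1(\tau_1\wedge\tau_2)^p\bigl|\log(a/b)\bigr|^p+c_2|\tau_1-\tau_2|^p
\]
(valid for $a,b>0$ and $\tau_1,\tau_2\in[0,1]$; cf.\ \cite[Lemma~3.1]{DKP14}), with $a=\bar u(x)$, $b=\bar u(y)$, $\tau_1=\eta(x)$, $\tau_2=\eta(y)$. Since $\eta\equiv 1$ on $B_r$, restricting the negative log-term to $B_r\times B_r$ and bounding the cutoff error by $|\eta(x)-\eta(y)|\le 4r^{-1}|x-y|$, \eqref{kernel} and the elementary estimate $\int_{B_{3r}}\phi(|x-y|)/|x-y|^n\,dy\le c\Phi(r)$ coming from \eqref{adecequiv}, this yields
\[
\iint_{B_{3r/2}^2}A(x,y)K(x,y)\,dx\,dy\le-c_1\iint_{B_r\times B_r}\bigl|\log\bar u(x)-\log\bar u(y)\bigr|^p K(x,y)\,dx\,dy+cr^{n-p}\Phi(r).
\]

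For the tail integral, $\zeta(y)=0$ for $y\notin B_{3r/2}$, so $A(x,y)\le(u(x)-u(y))_+^{p-1}\zeta(x)\le c_p\bigl(u_+(x)^{p-1}+u_-(y)^{p-1}\bigr)\zeta(x)$. The pointwise bounds $u_+(x)^{p-1}\zeta(x)\le\eta(x)^p$ (since $\bar u\ge u_+$) and $u_-(y)^{p-1}\zeta(x)\le d^{1-p}u_-(y)^{p-1}\eta(x)^p$ reduce matters to two contributions. For the $u_+$-piece I choose $\eta$ to be linear in $\mathrm{dist}(x,\partial B_{3r/2})$ so that $\eta$ vanishes near $\partial B_{3r/2}$; then the radial estimate $\int_{\mathbb{R}^n\setminus B_{3r/2}}K(x,y)\,dy\le c\phi(\mathrm{dist}(x,\partial B_{3r/2}))/\mathrm{dist}(x,\partial B_{3r/2})^p$ from \eqref{out.int} is absorbed into $\eta(x)^p$, yielding a contribution $\le cr^{n-p}\Phi(r)$ after a direct radial computation using \eqref{adecequiv} and \eqref{phi.Phi}. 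The $u_-$-piece vanishes on $B_R\setminus B_{3r/2}$ since $u\ge 0$ on $B_R$; for $y\in\mathbb{R}^n\setminus B_R$ and $x\in B_{3r/2}$, $r\le R/2$ gives $|x-y|\ge|y-x_0|/4$, so the almost-decrease of $t\mapsto\phi(t)/t^{n+p}$ implies $K(x,y)\le c\phi(|y-x_0|)/|y-x_0|^{n+p}$, and integrating against $\eta^p(x)$ together with \eqref{def.tail} gives a contribution
\[
\le cd^{1-p}r^n\,\frac{\Phi(R)}{R^p}\bigl[\tail(u_-;R)\bigr]^{p-1}.
\]

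Combining everything via $0\le I_{\mathrm{loc}}+2I_{\mathrm{tail}}$, rearranging, and using $K\ge\Lambda^{-1}\phi(|x-y|)/|x-y|^{n+p}$ on the left-hand side produces \eqref{log.est}. The main technical obstacle is the boundary computation for the $u_+$-tail piece, where the kernel singularity in $\int_{\mathbb{R}^n\setminus B_{3r/2}}K(x,y)\,dy$ near $\partial B_{3r/2}$ must be absorbed by the vanishing $\eta^p$ weight while keeping the $\phi$-factors tight enough to reproduce the correct dimensional factor $r^{n-p}\Phi(r)$; the DKP algebraic inequality itself is standard, requiring only a case analysis on $a\gtrless b$.
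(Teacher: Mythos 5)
Your argument is structurally the same as the paper's: test with $\zeta=\eta^p(u+d)^{1-p}$, invoke the Di Castro--Kuusi--Palatucci algebraic inequality to extract the negative log-term, and estimate the local and nonlocal pieces using \eqref{out.int}, the almost-decrease of $t\mapsto\phi(t)/t^{n+p}$, and \eqref{phi.Phi}. All the $\phi$-bookkeeping is done correctly and produces the stated prefactors.

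The one place you deviate is the decomposition radius: you split the double integral at $B_{3r/2}$ (the support of $\eta$), whereas the paper splits at $B_{2r}$. Because $\mathrm{supp}\,\eta\subset B_{3r/2}$, the paper's choice guarantees $|x-y|\ge r/2$ in the nonlocal piece, so $\int_{\mathbb{R}^n\setminus B_{2r}}K(x,y)\,dy\lesssim\phi(r)/r^p$ uniformly for $x$ in the support of $\eta$, with no boundary singularity to worry about. Your choice forces you to confront the degeneracy of $\int_{\mathbb{R}^n\setminus B_{3r/2}}K(x,y)\,dy$ as $x\to\partial B_{3r/2}$, which you resolve by taking $\eta$ (piecewise) linear in the distance to $\partial B_{3r/2}$ and absorbing the blow-up into $\eta^p$; the resulting radial integral $r^{-p}\int_0^{r/2}\phi(\tau)\,d\tau\lesssim r^{1-p}\phi(r)\lesssim r^{1-p}\Phi(r)$ indeed closes, but the paper's split at $2r$ sidesteps this entirely. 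Both routes reach the same estimate; the paper's is the cleaner implementation of the shared idea.
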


\begin{proof}
Let $\eta \in C^{\infty}_{0}(B_{3r/2})$ be a cut-off function such that $0 \le \eta \le 1$, $\eta \equiv 1$ in $B_{r}$ and $|D\eta| \le 4/r$. Testing \eqref{mainPDE} with $(u+d)^{1-p}\eta^{p}$, we have
\begin{equation*}
\begin{aligned}
0 & \le \int_{B_{2r}}\int_{B_{2r}}|u(x)-u(y)|^{p-2}(u(x)-u(y))\left[\frac{\eta^{p}(x)}{(u(x)+d)^{p-1}}-\frac{\eta^{p}(y)}{(u(y)+d)^{p-1}}\right]K(x,y)\,dx\,dy \\
& \quad\; + 2\int_{\mathbb{R}^{n}\setminus B_{2r}}\int_{B_{2r}}|u(x)-u(y)|^{p-2}(u(x)-u(y))\frac{\eta^{p}(x)}{(u(x)+d)^{p-1}}K(x,y)\,dx\,dy \\
& \eqqcolon I_{1} + I_{2}.
\end{aligned}
\end{equation*}
Following the proof of \cite[Lemma~1.3]{DKP16} (see also \cite[Proposition~3.10]{BDOR}), we have
\begin{equation*}
\begin{aligned}
I_{1} & \le -c^{-1}\int_{B_{2r}}\int_{B_{2r}}|\log(u(x)+d)-\log(u(y)+d)|^{p}\eta^{p}(y)K(x,y)\,dx\,dy \\
& \quad\; + c\int_{B_{2r}}\int_{B_{2r}}|\eta(x)-\eta(y)|^{p}K(x,y)\,dx\,dy \\
& \le -c^{-1}\int_{B_{r}}\int_{B_{r}}|\log(u(x)+d)-\log(u(y)+d)|^{p}K(x,y)\,dx\,dy + cr^{n-p}\Phi(r)
\end{aligned}
\end{equation*}
and
\begin{equation*}
\begin{aligned}
I_{2} & \le c\int_{\mathbb{R}^{n}\setminus B_{2r}}\int_{B_{2r}}\eta^{p}(x)K(x,y)\,dx\,dy + cd^{1-p}\int_{\mathbb{R}^{n}\setminus B_{R}}\int_{B_{2r}}u_{-}^{p-1}(y)\eta^{p}(x)K(x,y)\,dx\,dy \\
& \eqqcolon I_{2,1} + I_{2,2},
\end{aligned}
\end{equation*}
where $c= c(p)>1$.
To estimate $I_{2,1}$, we observe that $|y-x| \ge |y-x_{0}|-|x-x_{0}| \ge r/2$ for $x \in B_{3r/2}$ and $y \in \mathbb{R}^{n}\setminus B_{2r}$. 
Using this inequality, along with \eqref{out.int} and \eqref{phi.Phi}, we estimate
\begin{equation*}
I_{2,1} \le cr^{n}\sup_{x\in B_{3r/2}}\int_{\mathbb{R}^{n}\setminus B_{r/2}(x)}\frac{\phi(|x-y|)}{|x-y|^{n+p}}\,dy \le cr^{n-p}\phi(r) \le cr^{n-p}\Phi(r)
\end{equation*}
for a constant $c= c(n,p,s,\tilde{s},\Lambda,L)>0$. 
Moreover, since $\phi(t)/t^{n+p}$ is almost decreasing for $t\in(0,\infty)$ with constant $L$ and that \[ \frac{|y-x_{0}|}{|y-x|} \le 1+ \frac{|x-x_{0}|}{|y-x|} \le 1+ \frac{3r/2}{R-3r/2} \le 4 \]
for $x \in B_{3r/2}$ and $y \in \mathbb{R}^{n}\setminus B_{R}$, we obtain
\begin{equation*}
I_{2,2} \le cd^{1-p}r^{n}\int_{\mathbb{R}^{n}\setminus B_{R}}\frac{u_{-}^{p-1}(y)}{|y-x_{0}|^{p}}\frac{\phi(|y-x_{0}|)}{|y-x_{0}|^{n}}\,dy = cd^{1-p}r^{n}\frac{\Phi(R)}{R^{p}}[\tail(u_{-};R)]^{p-1}
\end{equation*}
for some $c= c(n,p,\Lambda,L)>0$. Combining all the estimates, we obtain \eqref{log.est}.
\end{proof}

As a consequence of Lemma~\ref{lem.log}, Theorem~\ref{thm:SoboPoin} and Corollary~\ref{cor.SP}, we have the following:
\begin{corollary}\label{cor.log}
Let $u \in \mathbb{W}^{\phi,p}(\Omega)$ be a weak supersolution to \eqref{mainPDE} under assumptions \eqref{kernel}, \eqref{Dini}, \eqref{adec} and \eqref{ainc}, which is nonnegative in a ball $B_{R}\equiv B_{R}(x_{0}) \subset \Omega$, and define
\[ v \coloneqq \min\{(\log(a+d)-\log(u+d))_{+},\log b\} \]
for constants $a,d>0$ and $b>1$. Then for any $r \in (0,R/2]$,
\begin{equation*}
\mean{B_{r}}|v-(v)_{B_{r}}|^{p}\,dx \le c + cd^{1-p}\frac{r^{p}}{\Phi(r)}\frac{\Phi(R)}{R^{p}}[\tail(u_{-};R)]^{p-1}
\end{equation*}
holds for a constant $c= c(n,p,s,\tilde{s},\Lambda,L)>0$.
\end{corollary}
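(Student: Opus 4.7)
The plan is to combine the Sobolev--Poincar\'e inequality of Theorem~\ref{thm:SoboPoin} (or Corollary~\ref{cor.SP} if $sp\ge n$) with the logarithmic estimate of Lemma~\ref{lem.log}, after first reducing the Dirichlet energy of $v$ to that of $\log(u+d)$ via a pointwise Lipschitz bound.

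First I would observe that $v$ is a $1$-Lipschitz composition of $\log(u+d)$. Indeed, writing $h(s) \coloneqq \min\{(\log(a+d) - s)_{+},\log b\}$, one checks directly that $h : \mathbb{R} \to [0,\log b]$ is $1$-Lipschitz (it is the composition of an affine map of slope $-1$ with two truncations). Since $v(x) = h(\log(u(x)+d))$ a.e. in $B_R$, we obtain the pointwise estimate
\[
|v(x)-v(y)| \le |\log(u(x)+d) - \log(u(y)+d)| \qquad \text{for a.e. } x,y \in B_{R}.
\]
Also, $v \in L^{\infty}(B_{R}) \subset L^{p}(B_r)$ since $0 \le v \le \log b$, so $v \in W^{\phi,p}(B_r)$ provided the right-hand side of Lemma~\ref{lem.log} is finite.

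Next I would apply the Sobolev--Poincar\'e inequality to $v$ on $B_r$. If $sp<n$, Theorem~\ref{thm:SoboPoin} yields
\[
\left(\mean{B_r} |v-(v)_{B_r}|^{p^{*}_{s}}\,dx\right)^{p/p^{*}_{s}} \le \frac{c}{s^p(n-sp)^{p-1}} \frac{r^p}{\Phi(r)}\,\mean{B_r} \int_{B_r} \frac{|v(x)-v(y)|^p}{|x-y|^p} \frac{\phi(|x-y|)}{|x-y|^n}\,dy\,dx,
\]
while if $sp\ge n$, Corollary~\ref{cor.SP} with any fixed $\tilde p >p$ (depending only on $n,p,s$) gives the analogous bound. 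Since $p^{*}_{s}\ge p$ (or $\tilde p \ge p$) and $\mean{B_r}$ is a probability measure, Jensen's inequality gives
\[
\mean{B_r}|v-(v)_{B_r}|^{p}\,dx \le \left(\mean{B_r}|v-(v)_{B_r}|^{p^{*}_{s}}\,dx\right)^{p/p^{*}_{s}}.
\]
Combining this with the Lipschitz bound above turns the right-hand side into a double integral of $\log(u+d)$-differences.

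Finally I would invoke Lemma~\ref{lem.log} to estimate that double integral, obtaining
\[
\int_{B_{r}}\int_{B_{r}}\frac{|\log(u(x)+d)-\log(u(y)+d)|^{p}}{|x-y|^{p}}\frac{\phi(|x-y|)}{|x-y|^{n}}\,dx\,dy \le c\,r^{n-p}\Phi(r)\left\{1+d^{1-p}\frac{r^p}{\Phi(r)}\frac{\Phi(R)}{R^p}[\tail(u_{-};R)]^{p-1}\right\}.
\]
Inserting this into the Sobolev--Poincar\'e bound, the $\mean{B_r}$ on the right contributes a factor $|B_r|^{-1} \approx r^{-n}$, and the prefactor $r^p/\Phi(r)$ cancels exactly against the $r^{n-p}\Phi(r)$ coming from the log estimate, yielding the claimed inequality.

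There is no serious obstacle: the only bookkeeping subtlety is checking that the prefactors $\Phi(r)$, $r^p$, and $|B_r|\approx r^n$ cancel cleanly so that the constant $c$ depends only on $n,p,s,\Lambda,L$ (and not on $r$, $R$, or $d$), and tracking which Sobolev--Poincar\'e version to invoke depending on whether $sp<n$ or $sp\ge n$. The Lipschitz composition step could be emphasized, but it is transparent once $h$ is introduced.
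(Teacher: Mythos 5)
Your proof is correct and follows precisely the route the paper intends: apply Theorem~\ref{thm:SoboPoin} (or Corollary~\ref{cor.SP} when $sp\ge n$) to the bounded function $v$, use Jensen to pass from $L^{p^{*}_{s}}$ to $L^{p}$, invoke the $1$-Lipschitz bound $|v(x)-v(y)|\le|\log(u(x)+d)-\log(u(y)+d)|$, and close with Lemma~\ref{lem.log}, noting that the factors $r^{p}/\Phi(r)$, $|B_r|^{-1}\approx r^{-n}$, and $r^{n-p}\Phi(r)$ cancel. The only point you could make slightly more explicit is that the right-hand side of Lemma~\ref{lem.log} is indeed finite because $\tail(u_{-};R)<\infty$ by Lemma~\ref{lem.tail.fin}, which justifies $v\in W^{\phi,p}(B_r)$.
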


\subsection{Local boundedness}
Here we prove Theorem~\ref{thm.bdd}.

\begin{proof}[Proof of Theorem~\ref{thm.bdd}]

Let $B_{r}\equiv B_{r}(x_{0})\subset \Omega$ be a fixed ball, and let $\varepsilon \in (0,1]$. 
For $r/2 \le \rho < \sigma \le r$ and $k>0$, we set
\[ A^{+}(k,\rho) \coloneqq \{x\in B_{\rho}:u(x) \ge k \}. \]
Observe that for any $0<h<k$ and $x \in A^{+}(k,\rho)\subset A^{+}(h,\rho)$, 
\[ (u(x)-h)_{+} = u(x)-h \ge k-h \]
and
\[ (u(x)-h)_{+} = u(x)-h \ge u(x)-k = (u(x)-k)_{+}. \]
Thus, we have
\begin{equation}\label{aplus.est}
|A^{+}(k,\rho)| \le \int_{A^{+}(k,\rho)}\frac{(u-h)_{+}^{p}}{(k-h)^{p}}\,dx \le \frac{1}{(k-h)^{p}}\int_{A^{+}(h,\sigma)}(u-h)_{+}^{p}\,dx
\end{equation}
and
\begin{equation}\label{uk.est}
\int_{B_{\sigma}}(u-k)_{+}\,dx \le \int_{B_{\sigma}}(u-h)_{+}\frac{(u-h)_{+}^{p-1}}{(k-h)^{p-1}}\,dx = \frac{1}{(k-h)^{p-1}}\int_{B_{\sigma}}(u-h)_{+}^{p}\,dx.
\end{equation}

In view of Remark~\ref{rmk:adec}, we may assume that $s p< n$ without loss of generality, and set $\kappa \coloneqq n/(n-s p) >1$. By H\"older's inequality and Theorem~\ref{thm:SoboPoin} with $v=(u-k)_+$, 
\begin{equation*}
\begin{aligned}
& \mean{B_{\rho}}(u-k)_{+}^{p}\,dx \le \left(\frac{|A^{+}(k,\rho)|}{|B_{\rho}|}\right)^{(\kappa-1)/\kappa}\left(\mean{B_{\rho}}(u-k)_{+}^{p\kappa}\,dx\right)^{1/\kappa} \\
& \le c\left(\frac{|A^{+}(k,\rho)|}{|B_{\rho}|}\right)^{(\kappa-1)/\kappa}\frac{r^{p}}{\Phi(r)}\mean{B_{\rho}}\int_{B_{\rho}}\frac{|(u(x)-k)_{+}-(u(y)-k)_{+}|^{p}}{|x-y|^{p}}\frac{\phi(|x-y|)}{|x-y|^{n}}\,dxdy \\
& \quad + c\left(\frac{|A^{+}(k,\rho)|}{|B_{\rho}|}\right)^{(\kappa-1)/\kappa}\mean{B_{\rho}}(u-k)_{+}^{p}\,dx.
\end{aligned}
\end{equation*}
Using this and Lemma~\ref{lem.ccp}, we obtain
\begin{equation*}
\begin{aligned}
& \frac{r^{p}}{\Phi(r)}\mean{B_{\rho}}\int_{B_{\rho}}\frac{|(u(x)-k)_{+}-(u(y)-k)_{+}|^{p}}{|x-y|^{p}}\frac{\phi(|x-y|)}{|x-y|^{n}}\,dx\,dy \\
& \le c\left(\frac{\sigma}{\sigma-\rho}\right)^{p}\mean{B_{\sigma}}(u-k)_{+}^{p}\,dx + c\left(\frac{\sigma}{\sigma-\rho}\right)^{n+\tilde{s}p}[\tail((u-k)_{+};\sigma)]^{p-1}\mean{B_{\sigma}}(u-k)_{+}\,dx.
\end{aligned}
\end{equation*}
Combining the last two displays, and then applying \eqref{aplus.est} and \eqref{uk.est}, we arrive at
\begin{equation}\label{cacciokh}
\begin{aligned}
\mean{B_{\rho}}(u-k)_{+}^{p}\,dx & \le \frac{c}{(k-h)^{p(\kappa-1)/\kappa}}\left[\left(\frac{\sigma}{\sigma-\rho}\right)^{p}+\left(\frac{\sigma}{\sigma-\rho}\right)^{n+\tilde{s}p}\frac{[\tail(u_{+};r/2)]^{p-1}}{(k-h)^{p-1}} + 1\right] \\
& \quad\; \cdot \left(\mean{B_{\sigma}}(u-h)_{+}^{p}\,dx\right)^{1+(\kappa-1)/\kappa}.
\end{aligned}
\end{equation}
Now, for $i \in \mathbb{N}\cup\{0\}$ and $k_{0}>0$ with 
\begin{equation}\label{choosek01}
2k_{0} \ge \varepsilon\tail(u_{+};r/2),
\end{equation}
we set
\[ \sigma_{i} \coloneqq \frac{r}{2}(1+2^{-i}), \qquad k_{i} \coloneqq 2k_{0}(1-2^{-i-1}), \qquad y_{i} \coloneqq \frac{1}{|B_{r}|}\int_{A^{+}(k_{i},\sigma_{i})}\left[\frac{(u-k_{i})_{+}}{k_{0}}\right]^{p}\,dx. \]
Choosing $k = k_{i+1}$, $h = k_{i}$, $\rho = \sigma_{i+1}$ and $\sigma = \sigma_{i}$ in \eqref{cacciokh}, and then dividing both sides of the resulting inequality by $k_{0}^{p}$, we obtain
\begin{equation*}
y_{i+1} \le \tilde{c}\varepsilon^{1-p}2^{i[p(\kappa-1)/\kappa +n+\tilde{s}p +p-1]}y_{i}^{1+(\kappa-1)/\kappa}
\end{equation*}
for a constant $\tilde{c} = \tilde{c}(n,s,\tilde{s},p,\Lambda,L)>0$. According to Lemma~\ref{lemseq}, if
\begin{equation}\label{choosek02}
y_{0} = \frac{1}{k_{0}^{p}}\mean{B_{r}}(u-k_{0})_{+}^{p}\,dx \le (\tilde{c}\varepsilon^{1-p})^{-\kappa/(\kappa-1)}2^{-[p(\kappa-1)/\kappa+n+\tilde{s}p + p-1]\kappa^{2}/(\kappa-1)^{2}},
\end{equation}
then $\lim_{i\rightarrow\infty}y_{j}=0$ and therefore $u \le 2k_{0}$ a.e. in $B_{r/2}$. Now, for a sufficiently large constant $c_{b} = c_{b}(n,s,\tilde{s},p,\Lambda,L)>0$,
\[ 2k_{0} = c_{b}\varepsilon^{-\frac{(p-1)\kappa}{p(\kappa-1)}}\left(\mean{B_{r}}u_{+}^{p}\,dx\right)^{1/p} + \varepsilon\tail(u_{+};r/2) \]
satisfies both \eqref{choosek01} and \eqref{choosek02}; this leads to \eqref{sup.est} and \eqref{sup.est2}.
\end{proof}

\section{Expansion of positivity}\label{sec5}
We start this section with an expansion of positivity lemma which will be used in the proof of Theorems \ref{thm.hol} and \ref{thm.harnack}.

\begin{lemma}\label{lem.positivity}
Let $u \in \mathbb{W}^{\phi,p}(\Omega)$ be a weak supersolution to \eqref{mainPDE} under assumptions \eqref{kernel}, \eqref{Dini}, \eqref{adec} and \eqref{ainc}, which is nonnegative in a ball $B_{R}\equiv B_{R}(x_{0}) \subset \Omega$. Let $k>0$ and $r\in(0,R/4]$, and assume that 
\begin{equation}\label{density.assumption}
|B_{2r}\cap \{u \ge k\}| \ge \sigma|B_{2r}|
\end{equation}
for some $\sigma\in(0,1)$. Then there exists a constant $\delta = \delta(n,p,s,\tilde{s},\Lambda,L,\sigma) \in (0,1/4)$ such that the following holds: if
\begin{equation}\label{tail.small}
\left(\frac{r^{p}}{\Phi(r)}\frac{\Phi(R)}{R^{p}}\right)^{1/(p-1)}\tail(u_{-};R) \le \delta k, 
\end{equation}
then
\begin{equation}\label{lower.bound}
\inf_{B_{r}}u \ge \delta k.
\end{equation}
\end{lemma}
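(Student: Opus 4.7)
The plan is to run the standard two-step De Giorgi argument for expansion of positivity, following the nonlocal adaptations in \cite{DKP14,DKP16,Coz17,BDOR}. The first step uses Corollary~\ref{cor.log} together with the density hypothesis \eqref{density.assumption} to deduce that the sub-level set $B_{2r}\cap\{u\le 2\delta k\}$ is quantitatively small in $B_{2r}$. The second step upgrades this measure-theoretic information into the pointwise bound $u\ge\delta k$ a.e.\ on $B_r$ via a De Giorgi iteration driven by the Caccioppoli inequality (Lemma~\ref{lem.ccp}) and the Sobolev-Poincar\'e inequality (Theorem~\ref{thm:SoboPoin} together with Corollary~\ref{cor.SP}). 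The tail hypothesis \eqref{tail.small} is calibrated exactly so that all nonlocal tail contributions throughout both steps are of order $(\delta k)^{p-1}$, and are therefore absorbed into constants depending only on the data.

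\textbf{Step 1.} Fix $\delta\in(0,1/4)$ to be chosen, set $d\coloneqq 2\delta k$ and choose a parameter $b>1$. Define the truncated logarithm $v\coloneqq \min\{(\log(k+d)-\log(u+d))_{+},\log b\}$, so that $v\equiv 0$ on $\{u\ge k\}$ and $v\le \log b$ everywhere. The density hypothesis \eqref{density.assumption} then gives $(v)_{B_{2r}}\le (1-\sigma)\log b$. Moreover, \eqref{tail.small} implies
\[
d^{1-p}\,\frac{r^{p}}{\Phi(r)}\,\frac{\Phi(R)}{R^{p}}\,[\tail(u_{-};R)]^{p-1}\le 2^{1-p},
\]
so that Corollary~\ref{cor.log} yields $\mean{B_{2r}}|v-(v)_{B_{2r}}|^{p}\,dx\le c$ with $c=c(n,p,s,\Lambda,L)$. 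Since $\{v=\log b\}\subset\{v-(v)_{B_{2r}}\ge \sigma\log b\}$, Chebyshev's inequality gives
\[
\Big|\big\{u\le \tfrac{k+d}{b}-d\big\}\cap B_{2r}\Big|\le \frac{c}{\sigma^{p}(\log b)^{p}}|B_{2r}|.
\]
Choosing $b=(1+2\delta)/(4\delta)$ makes $(k+d)/b-d=2\delta k$, so that $\log b\ge \log(1/(4\delta))$ and
\[
|\{u\le 2\delta k\}\cap B_{2r}|\le \frac{c}{\sigma^{p}(\log(1/\delta))^{p}}|B_{2r}|,
\]
which tends to $0$ as $\delta\searrow 0$.

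\textbf{Step 2.} Set $h_{j}\coloneqq \delta k(1+2^{-j})$, $r_{j}\coloneqq r(1+2^{-j})$, $w_{j}\coloneqq (h_{j}-u)_{+}$ and $y_{j}\coloneqq (\delta k)^{-p}\mean{B_{r_{j}}}w_{j}^{p}\,dx$ for $j\ge 0$. Step 1 gives $y_{0}\le c\sigma^{-p}(\log(1/\delta))^{-p}$. Apply the supersolution version of Lemma~\ref{lem.ccp} at level $h_{j+1}$ on radii $r_{j+1}<r_{j}$ to $w_{j+1}$; the tail of $w_{j+1}$ satisfies
\[
[\tail(w_{j+1};r_{j})]^{p-1}\le c(\delta k)^{p-1},
\]
since $u\ge 0$ in $B_{R}$ forces $w_{j+1}\le 2\delta k$ there, while \eqref{tail.small} controls the contribution from $\mathbb{R}^{n}\setminus B_{R}$ by $c(\delta k)^{p-1}$. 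Combining this Caccioppoli estimate with the Sobolev-Poincar\'e inequality of Theorem~\ref{thm:SoboPoin} (or Corollary~\ref{cor.SP} if $sp\ge n$) applied to $w_{j+1}$, together with H\"older's inequality and the elementary bound
\[
|B_{r_{j+1}}\cap\{u<h_{j+1}\}|\le \frac{2^{(j+1)p}}{(\delta k)^{p}}\int_{B_{r_{j}}}w_{j}^{p}\,dx
\]
(which follows from $w_{j}\ge h_{j}-h_{j+1}=\delta k\,2^{-j-1}$ on $\{u<h_{j+1}\}$), yields a recursive inequality of the form
\[
y_{j+1}\le C b^{j}\,y_{j}^{1+\beta}
\]
with constants $C,b>1$ and $\beta\coloneqq sp/n>0$ depending only on $n,p,s,\Lambda,L$. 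Choosing $\delta$ so small that $y_{0}\le C^{-1/\beta}b^{-1/\beta^{2}}$, Lemma~\ref{lemseq} forces $y_{j}\to 0$, hence $u\ge \delta k$ a.e.\ on $B_{r}$, which is \eqref{lower.bound}.

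\textbf{Main obstacle.} The central technical issue is precise book-keeping of the nonlocal tail at every step. The specific combination $r^{p}\Phi(R)/(\Phi(r)R^{p})$ in \eqref{tail.small} is exactly the factor that converts a tail at scale $R$ into one at scale $r$ within the Caccioppoli and logarithmic estimates of Section~\ref{sec4}, thereby keeping all tail terms at the level $(\delta k)^{p-1}$ needed to close the iteration. The sharp form of the Sobolev-Poincar\'e inequality of Section~\ref{sec3}, whose constant stays bounded as $s\nearrow 1$, is what allows $\delta=\delta(n,p,s,\Lambda,L,\sigma)$ to remain stable in the local limit. A minor complication is to ensure that the choice of $d=2\delta k$ in Step 1 is compatible with the smallness of $y_{0}$ required in Step 2; this is why the target level is chosen as $2\delta k$ in Step 1 while the iteration in Step 2 descends from $h_{0}=2\delta k$ to $h_{\infty}=\delta k$.
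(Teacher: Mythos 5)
Your proposal is correct and follows essentially the same two-step De Giorgi strategy as the paper: a logarithmic estimate combined with the density hypothesis to reduce the measure of the sub-level set $\{u\le 2\delta k\}$, followed by a Caccioppoli--Sobolev-Poincar\'e iteration to upgrade to a pointwise lower bound. The only cosmetic differences are that you take $d=2\delta k$ in Step 1 where the paper sets $d$ proportional to the tail term (both render the tail contribution in Corollary~\ref{cor.log} a dimensional constant), and you iterate on the averaged quantities $y_j$ via an extra H\"older step where the paper iterates directly on the measure densities $A_j = |B_j\cap\{u<\ell_j\}|/|B_j|$, yielding $\beta=\kappa-1$ rather than your $\beta = 1-1/\kappa$; both close via Lemma~\ref{lemseq}.
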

\begin{proof}
\textit{Step 1: A density estimate. } 
We first show that there exists a constant $\bar{c} = \bar{c}(n,p,s,\tilde{s},\Lambda,L)>0$ such that
\begin{equation}\label{density.est}
\left|B_{2r}\cap\left\{u \le 2\delta k\right\}\right| \le \frac{\bar{c}}{\sigma \log(1/3\delta)}|B_{2r}|
\end{equation}
holds for any $\delta \in (0,1/4)$. We set
\[ d\coloneqq \frac{1}{2}\left(\frac{r^{p}}{\Phi(r)}\frac{\Phi(R)}{R^{p}}\right)^{1/(p-1)}\tail(u_{-};R) \quad \text{and then} \quad  v\coloneqq \left[\min\left\{\log\frac{1}{3\delta},\log\frac{k+d}{u+d}\right\}\right]_{+}. \]
With this choice of $v$, Corollary~\ref{cor.log} and H\"older's inequality imply that
\begin{equation} \label{loglog}
\mean{B_{2r}}|v-(v)_{B_{2r}}|\,dx \le \left(\mean{B_{2r}}|v-(v)_{B_{2r}}|^{p}\,dx\right)^{1/p} \le \bar{c}
\end{equation}
holds for a constant $\bar{c}= \bar{c}(n,p,s,\tilde{s},\Lambda,L)>0$.
Moreover, by \eqref{density.assumption} and the definition of $v$,
\begin{equation*}
|B_{2r}\cap\{v=0\}| \ge \sigma|B_{2r}|
\end{equation*}
and so
\begin{equation*}
\log\frac{1}{3\delta} = \frac{1}{|B_{2r}\cap\{v=0\}|}\int_{B_{2r}\cap\{v=0\}}\left(\log\frac{1}{3\delta}-v\right)\,dx 
 \le \frac{1}{\sigma}\left(\log\frac{1}{3\delta}-(v)_{B_{2r}}\right).
\end{equation*}
Integrating this inequality over the set $B_{2r}\cap\{v=\log(1/3\delta)\}$ and then applying \eqref{loglog}, we obtain
\begin{equation}\label{asdf}
\log\frac{1}{3\delta}\left|B_{2r}\cap\left\{v=\log\frac{1}{3\delta}\right\}\right| \le \frac{1}{\sigma}\int_{B_{2r}\cap\{v=\log(1/3\delta)\}}|v-(v)_{B_{2r}}|\,dx \le \frac{\bar{c}}{\sigma}|B_{2r}|.
\end{equation}
By the definitions of $v$ and $d$ and  \eqref{tail.small}, 
\[ \{v=\log(1/3\delta)\} = \{u+d \le 3\delta(k+d)\} \supset \{u \le 2\delta k \}, \]
which along with \eqref{asdf} implies \eqref{density.est}.

\textit{Step 2: A pointwise bound. } We now prove \eqref{lower.bound} by choosing $\delta \in (0,1/4)$ in a suitable way.  
For each $j \in \mathbb{N}\cup\{0\}$, we set
\[ \rho_{j}\coloneqq (1+2^{-j})r, \qquad B_{j}\coloneqq B_{\rho_{j}}(x_{0}) \]
and
\[ \ell_{j}\coloneqq (1+2^{-j})\delta k, \qquad w_{j} \coloneqq (\ell_{j}-u)_{+}, \qquad A_{j}\coloneqq \frac{|B_{j}\cap\{u <\ell_{j}\}|}{|B_{j}|}. \]
In this setting, \eqref{density.est} reads as
\begin{equation}\label{A0.small} 
A_{0} = \frac{|B_{0}\cap\{u<2\delta k\}|}{|B_{0}|} \le \frac{\bar{c}}{\sigma\log(1/3\delta)}. 
\end{equation}
We also observe that
\[w_{j} = (\ell_{j}-u)_{+} \ge (\ell_{j}-\ell_{j+1})\chi_{\{u<\ell_{j+1}\}} \ge 2^{-j-2}\ell_{j}\chi_{\{u<\ell_{j+1}\}}. \] 
Without loss of generality, we may assume that $s p<n$ (see Remark~\ref{rmk:adec}). 
Thus, setting $\kappa \coloneqq n/(n-s p) >1$ , Theorem~\ref{thm:SoboPoin} implies
\begin{equation}\label{Aj.1st}
\begin{aligned}
2^{-(j+2)p}\ell_{j}^{p}A_{j+1}^{1/\kappa} & \le \left(\mean{B_{j+1}}w_{j}^{p\kappa}\,dx\right)^{1/\kappa} \\
& \le c\frac{\rho_{j+1}^{p}}{\Phi(\rho_{j+1})}\mean{B_{j+1}}\int_{B_{j+1}}\frac{|w_{j}(x)-w_{j}(y)|^{p}}{|x-y|^{p}}\frac{\phi(|x-y|)}{|x-y|^{n}}\,dx\,dy + c\mean{B_{j+1}}w_{j}^{p}\,dx \\
& \eqqcolon I_{1} + I_{2}.
\end{aligned}
\end{equation}
It is straightforward to estimate $I_{2}$ as
\[ I_{2} = c\mean{B_{j+1}}w_{j}^{p}\,dx \le c\mean{B_{j}}w_{j}^{p}\,dx \le \frac{c}{|B_{j}|}\int_{B_{j}\cap\{u\le \ell_{j}\}}\ell_{j}^{p}\,dx \le c\ell_{j}^{p}A_{j}, \]
and a similar calculation also gives
\begin{equation}\label{wj.est} 
\mean{B_{j}}w_{j}\,dx \le c\ell_{j}A_{j}.
\end{equation}
As for $I_{1}$, Lemma~\ref{lem.ccp} together with the fact that $\Phi(\rho_i)\le 2^pL\Phi(\rho_{i+1})$ implies 
\begin{equation*}
\begin{aligned}
I_{1} & \le c\left(\frac{\rho_{j}}{\rho_{j}-\rho_{j+1}}\right)^{p}\mean{B_{j}}w_{j}^{p}\,dx \\
& \quad + c\left(\frac{\rho_{j}}{\rho_{j}-\rho_{j+1}}\right)^{n+\tilde{s}p}\mean{B_{j}}w_{j}\,dx\cdot\frac{\rho_{j}^{p}}{\Phi(\rho_{j})}\int_{\mathbb{R}^{n}\setminus B_{j}}\frac{(\ell_{j}+u_{-}(y))^{p-1}}{|y-x_{0}|^{p}}\frac{\phi(|y-x_{0}|)}{|y-x_{0}|^{n}}\,dy,
\end{aligned}
\end{equation*}
where we have also used the fact that $w_{j} = (\ell_{j}-u)_{+} \le \ell_{j} + u_{-}$. Then, using \eqref{out.int}, \eqref{tail.small} and the fact that $\delta k \le \ell_{j}$, we get
\begin{equation*}
\begin{aligned}
& \frac{\rho_{j}^{p}}{\Phi(\rho_{j})}\int_{\mathbb{R}^{n}\setminus B_{j}}\frac{(\ell_{j}+u_{-}(y))^{p-1}}{|y-x_{0}|^{p}}\frac{\phi(|y-x_{0}|)}{|y-x_{0}|^{n}}\,dy \\
& \le c\frac{\rho_{j}^{p}}{\Phi(\rho_{j})}\left(\int_{\mathbb{R}^{n}\setminus B_{j}}\frac{\ell_{j}^{p-1}}{|y-x_{0}|^{p}}\frac{\phi(|y-x_{0}|)}{|y-x_{0}|^{n}}\,dy + \int_{\mathbb{R}^{n}\setminus B_{R}}\frac{u_{-}^{p-1}(y)}{|y-x_{0}|^{p}}\frac{\phi(|y-x_{0}|)}{|y-x_{0}|^{n}}\,dy\right) \\
& \le c\ell_{j}^{p-1} + c\frac{r^{p}}{\Phi(r)}\frac{\Phi(R)}{R^{p}}[\tail(u_{-};R)]^{p-1} \\
& \le c\ell_{j}^{p-1},
\end{aligned}
\end{equation*}
which along with \eqref{wj.est} gives
\begin{equation*}
I_{1} \le c2^{jp}\ell_{j}^{p}A_{j} + c2^{j(n+\tilde{s}p)}\ell_{j}^{p}A_{j}.
\end{equation*}
Connecting the estimates found for $I_{1}$ and $I_{2}$ to \eqref{Aj.1st}, we arrive at
\[ A_{j+1} \le c_{*}2^{j(n+\tilde{s}p)\kappa}A_{j}^{\kappa} \]
for some $c_{*} = c_{*}(n,p,s,\tilde{s},\Lambda,L)>0$. Now, recalling \eqref{A0.small}, we choose $\delta = \delta(n,p,s,\tilde{s},\Lambda,L)$ as
\[ \delta =\frac{1}{4}\exp\left\{-\frac{\bar{c}}{\sigma}c_{*}^{1/(\kappa-1)}2^{(n+\tilde{s}p)\kappa/(\kappa-1)^{2}}\right\} \in \left(0,\frac{1}{4}\right) \]
so that
\begin{equation*}
A_{0} = \frac{|B_{2r}\cap\{u<\ell_{0}\}|}{|B_{2r}|} \le \frac{\bar{c}}{\sigma}\frac{1}{\log(1/3\delta)}
 \le c_{*}^{-1/(\kappa-1)}2^{-(n+\tilde{s}p)\kappa/(\kappa-1)^{2}}.
\end{equation*}
Consequently, Lemma~\ref{lemseq} implies $\lim_{j\to\infty}A_{j} = 0$ and so $u \ge \delta k$ a.e. in $B_{r}$. 
\end{proof}

\subsection{H\"older regularity}
We prove Theorem~\ref{thm.hol}. 
\begin{proof}[Proof of Theorem~\ref{thm.hol}]
Let $B_{r}\equiv B_{r}(x_{0})\subset \Omega$ be a fixed ball, and set
\[ k_{0} \coloneqq 2\left[c_{b}\left(\mean{B_{r}}|u|^{p}\,dx\right)^{1/p} + \tail(u;r/2)\right], \]
where $c_{b} = c_{b}(n,p,s,\tilde{s},\Lambda,L)>0$ is the constant determined in Theorem~\ref{thm.bdd}.
In order to obtain \eqref{holder.est}, it suffices to show that there exist constants $\alpha \in (0,1)$ and $\tau \in (0,1/4)$, both depending only on $n$, $p$, $s$, $\tilde{s}$, $\Lambda$ and $L$, such that
\begin{equation}\label{osc.est}
\osc_{B_{\tau^{j}r/2}}u \le \tau^{\alpha j}k_{0}
\end{equation}
holds for every $j \in \mathbb{N}\cup\{0\}$. 

We proceed by strong induction on $j$. First, the estimate \eqref{sup.est2} with $\delta=1$ implies that \eqref{osc.est} holds for $j=0$. We now assume that \eqref{osc.est} holds for every $j\in\{0,1,\ldots,i\}$, with $i \in \mathbb{N}\cup\{0\}$ being fixed, and then show that it holds for $j=i+1$ as well. 

For each $j \in \mathbb{N}\cup\{0\}$, we set
\[ r_{j} \coloneqq \tau^{j}\frac{r}{2}, \qquad B_{j} \coloneqq B_{r_{j}}(x_{0}), \qquad k_{j} \coloneqq \left(\frac{r_{j}}{r_{0}}\right)^{\alpha}k_{0} = \tau^{\alpha j}k_{0}, \]
with the values of $\alpha \in (0,1)$ and $\tau \in (0,1/4)$ to be determined later, and then
\[ M_{j} \coloneqq \sup_{B_{j}}u, \qquad m_{j} \coloneqq \inf_{B_{j}}u. \]
Observe that either
\begin{equation}\label{alt1}
\left|2B_{i+1}\cap\left\{u-m_{i} \ge \tfrac{1}{2}k_{i}\right\}\right| \ge \tfrac{1}{2}\left|2B_{i+1}\right|
\end{equation}
or
\begin{equation}\label{alt2}
\left|2B_{i+1}\cap\left\{k_{i} - (u-m_{i}) \ge \tfrac{1}{2}k_{i}\right\}\right| \ge \tfrac{1}{2}\left|2B_{i+1}\right|
\end{equation}
must hold; we accordingly define
\begin{equation*}
u_{i} \coloneqq 
\begin{cases}
u - m_{i} & \text{ if \eqref{alt1} holds,} \\
k_{i} - (u-m_{i}) & \text{ if \eqref{alt2} holds}.
\end{cases}
\end{equation*}
Then $u_{i}$ is a weak solution to \eqref{mainPDE} that satisfies $u_{i} \ge 0$ in $B_{i}$ and
\begin{equation}\label{levelset.i+1}
\left|2B_{i+1}\cap\left\{u_{i} \ge \tfrac{1}{2}k_{i}\right\}\right| \ge \tfrac{1}{2}\left|2B_{i+1}\right|.
\end{equation}
Moreover, it holds that
\[ |u_{i}| \le M_{j} - m_{j} + k_{i} \le k_{j} + k_{i} \le 2k_{j} \;\; \text{a.e. in}\;\; B_{j} \quad \text{for any}\;\; j \in \{0,1,\ldots,i\} \]
and that 
\[ |u_{i}| \le |u| + 2k_{0} \;\; \text{a.e. in} \;\; \mathbb{R}^{n}\setminus B_{0}. \] 
Using these inequalities along with \eqref{out.int}, we estimate
\begin{equation*}
\begin{aligned}
& \frac{\Phi(r_{i})}{r_{i}^{p}}[\tail(u_{i};r_{i})]^{p-1} \\
& = \sum_{j=1}^{i}\int_{B_{j-1}\setminus B_{j}}\frac{|u_{i}(y)|^{p-1}}{|y-x_{0}|^{p}}\frac{\phi(|y-x_{0}|)}{|y-x_{0}|^{n}}\,dy + \int_{\mathbb{R}^{n}\setminus B_{0}}\frac{|u_{i}(y)|^{p-1}}{|y-x_{0}|^{p}}\frac{\phi(|y-x_{0}|)}{|y-x_{0}|^{n}}\,dy \\
& \le \sum_{j=1}^{i}\int_{B_{j-1}\setminus B_{j}}\frac{(2k_{j-1})^{p-1}}{|y-x_{0}|^{p}}\frac{\phi(|y-x_{0}|)}{|y-x_{0}|^{n}}\,dy + \int_{\mathbb{R}^{n}\setminus B_{0}}\frac{(|u(y)|+2k_{0})^{p-1}}{|y-x_{0}|^{p}}\frac{\phi(|y-x_{0}|)}{|y-x_{0}|^{n}}\,dy \\
& \le c\sum_{j=1}^{i}(2k_{j-1})^{p-1}\frac{\Phi(r_{j})}{r_{j}^{p}} + c\frac{\Phi(r_{0})}{r_{0}^{p}}[\tail(u;r_{0})]^{p-1} + ck_{0}^{p-1}\frac{\Phi(r_{0})}{r_{0}^{p}} \\
& \le c\sum_{j=1}^{i}k_{j-1}^{p-1}\frac{\Phi(r_{j})}{r_{j}^{p}}
\end{aligned}
\end{equation*}
for a constant $c = c(n,p,s,\tilde{s},L)>0$. Thus, if $\alpha$ is so small that
\begin{equation}\label{gamma.choice1}
\alpha \le \frac{s p}{2(p-1)},
\end{equation}
then, using \eqref{adec}, 
\begin{equation*}
\begin{aligned}
\frac{r_{i+1}^{p}}{\Phi(r_{i+1})}\frac{\Phi(r_{i})}{r_{i}^{p}}[\tail((u_{i})_{-};r_{i})]^{p-1} & \le L \tau^{s p}[\tail(u_{i};r_{i})]^{p-1} \\
& \le c\tau^{s p}\frac{r_{i}^{p}}{\Phi(r_{i})}\sum_{j=1}^{i}k_{j-1}^{p-1}\frac{\Phi(r_{j})}{r_{j}^{p}} \\
& \le ck_{i}^{p-1}\sum_{j=1}^{i}\tau^{(1+i-j)[s p-\alpha(p-1)]} \\
& \le ck_{i}^{p-1}\sum_{j=1}^{i}\tau^{js p/2} \le c\frac{\tau^{s p/2}}{1-\tau^{s p/2}}k_{i}^{p-1},
\end{aligned}
\end{equation*}
holds for a constant $c= c(n,p,s,\tilde{s},L)$. We now choose $\tau = \tau(n,p,s,\tilde{s},\Lambda,L) \in (0,1/4)$ so small that
\[ \left(\frac{r_{i+1}^{p}}{\Phi(r_{i+1})}\frac{\Phi(r_{i})}{r_{i}^{p}}\right)^{1/(p-1)}\tail((u_{i})_{-};r_{i}) \le \left(c\frac{\tau^{s p/2}}{1-\tau^{s p/2}}\right)^{1/(p-1)}k_{i} \le \frac{\delta}{2}k_{i}, \]
where $\delta = \delta(n,p,s,\tilde{s},\Lambda,L) \in (0,1/4)$ is the constant determined in Lemma~\ref{lem.positivity} with the choice $\sigma = 1/2$. 
Recalling \eqref{levelset.i+1}, we use Lemma~\ref{lem.positivity} with the choices $\sigma = 1/2$, $k = k_{i}/2$, $B_{R} = B_{i}$ and $B_{r} = B_{i+1}$ in order to get 
\[ \inf_{B_{i+1}}u_{i+1} \ge \delta k_{i}/2.\]
This in turn implies the following:
\begin{itemize}
\item[(i)] If \eqref{alt1} holds, then $m_{i+1} - m_{i} \ge \delta k_{i}/2$ and so 
\begin{equation*}
M_{i+1} - m_{i+1} \le M_{i} - m_{i} - (m_{i+1}-m_{i}) = \osc_{B_{i}}u - (m_{i+1}-m_{i}) \le \left(1-\frac{\delta}{2}\right)k_{i}.
\end{equation*}
\item[(ii)] If \eqref{alt2} holds, then $k_{i} - M_{i+1} + m_{i} \ge \delta k_{i}/2$ and so
\begin{equation*}
M_{i+1}-m_{i+1} \le M_{i+1}-m_{i} \le \left(1-\frac{\delta}{2}\right)k_{i}.
\end{equation*}
\end{itemize}
Namely, in any case we obtain 
\[ \osc_{B_{i+1}}u \le \left(1-\frac{\delta}{2}\right)\tau^{-\alpha}k_{i+1}. \]
Then we finally choose $\alpha = \alpha(n,p,s,\tilde{s},\Lambda,L)\in (0,1)$ small enough to satisfy \eqref{gamma.choice1} and $1-\delta/2\le \tau^{\alpha}$. This implies \eqref{osc.est} for $j=i+1$, and the proof of Theorem~\ref{thm.hol} is complete.
\end{proof}

\subsection{Nonlocal Harnack inequality}
We start by introducing a nonlocal version of weak Harnack inequality with a (small) fixed exponent $p_0>0$. Note that this can be obtained  using almost the same method as in \cite{BDOR,Coz17,DKP14}, with considering the properties of the function $\phi$ that we have used. Therefore, we shall omit the proof of the next lemma.  
\begin{lemma}\label{lem.wh}
Let $u \in \mathbb{W}^{\phi,p}(\Omega)$ be a weak supersolution to \eqref{mainPDE} under assumptions \eqref{kernel}, \eqref{Dini}, \eqref{adec} and \eqref{ainc}, which is nonnegative in a ball $B_{R}\equiv B_{R}(x_{0})\subset \Omega$. Then there exist constants $p_{0} \in (0,1)$ and $c\ge 1$, both depending only on $n$, $p$, $s$, $\tilde{s}$, $\Lambda$ and $L$, such that
\begin{equation}\label{wh.est}
\left(\mean{B_{r}}u^{p_{0}}\,dx\right)^{1/p_{0}} \le c\inf_{B_{r}}u + c\left(\frac{r^{p}}{\Phi(r)}\frac{\Phi(R)}{R^{p}}\right)^{1/(p-1)}\tail(u_{-};R)
\end{equation}
holds whenever $r \in (0,R]$.
\end{lemma}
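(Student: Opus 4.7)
The strategy is the standard combination of Moser iteration for negative powers of $u$ with a John--Nirenberg (Bombieri--Giusti) bridge to a small positive exponent, executed in the nonlocal framework as in \cite{DKP14,Coz17,BDOR}; the only genuinely new work is the bookkeeping of $\phi$ and $\Phi$ factors so that the constants remain bounded as $s \nearrow 1$.

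First, I would set
\[
d := \left(\frac{r^{p}}{\Phi(r)}\frac{\Phi(R)}{R^{p}}\right)^{1/(p-1)}\tail(u_{-};R), \qquad \bar{u} := u+d,
\]
and reduce the claim to showing $(\mean{B_{r}}\bar{u}^{p_{0}}\,dx)^{1/p_{0}} \le c\inf_{B_{r}}\bar{u}$ for some small $p_{0}$. The role of $d$ is to absorb every tail contribution generated at scales $\rho \in (0,R]$: using \eqref{out.int} and \eqref{adec}, one checks that
\[
\left(\frac{\rho^{p}}{\Phi(\rho)}\frac{\Phi(R)}{R^{p}}\right)^{1/(p-1)}\tail(u_{-};R) \le c\,d,
\]
so tail terms can be treated as lower-order multiples of $\bar u$. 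Next, testing \eqref{mainPDE} against $\eta^{p}\bar u^{q}$ for suitable $q<0$ with $q\neq 1-p$ and a cut-off $\eta$ between $B_{\rho'}\Subset B_{\rho}\Subset B_{r}$, and arguing as in Lemma~\ref{lem.ccp} with the algebraic inequality $|a-b|^{p-2}(a-b)(a^{q}-b^{q})\ge c_{q}|a^{(q+p-1)/p}-b^{(q+p-1)/p}|^{p}$, yields a Caccioppoli estimate for $\bar u^{(q+p-1)/p}$. Combining with Theorem~\ref{thm:SoboPoin} (using Remark~\ref{rmk:adec} to fix an auxiliary $s_{0}<s$ so that $\kappa = n/(n-s_{0}p)>1$ is controlled uniformly) produces a reverse H\"older step in which the exponent is multiplied by $\kappa$. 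A standard Moser iteration on a geometric sequence of radii shrinking to $r/2$ then gives, for any fixed $q_{0}>0$,
\[
\inf_{B_{r/2}}\bar{u}\;\ge\;c^{-1}\left(\mean{B_{r}}\bar{u}^{-q_{0}}\,dx\right)^{-1/q_{0}}.
\]

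For the bridge to positive exponents, I would use Corollary~\ref{cor.log}, which asserts that $v:=\log\bar{u}$ has bounded mean oscillation in $B_{r}$ uniformly in $s$ (thanks to the absorbing choice of $d$). A John--Nirenberg argument in the fractional setting, based on a Calder\'on--Zygmund cube decomposition as in \cite[Lemma~1.3]{DKP14} and \cite[Section~8]{Coz17}, then yields a threshold $p_{0}=p_{0}(n,p,s,\Lambda,L)\in(0,1)$ together with
\[
\left(\mean{B_{r}}\bar{u}^{p_{0}}\,dx\right)\left(\mean{B_{r}}\bar{u}^{-p_{0}}\,dx\right)\le c.
\]
Chaining this with the reverse infimum estimate at $q_{0}=p_{0}$ and unpacking the definition of $\bar u = u+d$ yields \eqref{wh.est}. \emph{Main obstacle.} The most delicate part is the Caccioppoli estimate for $\bar u^{q}$ with $q<0$: the test function is admissible only because $\bar u\ge d>0$, and the resulting double integral must be handled via the Simon-type algebraic inequality above, whose constant degenerates as $q\to 1-p$, forcing a case split. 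Parallel to this, $s$-stability through the Moser iteration requires that the Caccioppoli constant $\Phi(r)/(r-\rho)^{p}$ and the Sobolev constant from \eqref{Sobopoin} combine so that, after $j$ iterations, the prefactor is bounded by $c^{j/\kappa^{j}}$ with a $c$ independent of $s$; this is precisely where the flexibility in choosing $s_{0}<s$ via Remark~\ref{rmk:adec} is essential. Finally, the John--Nirenberg step in the nonlocal framework must be carried out with care because the BMO-type bound of Corollary~\ref{cor.log} contains an extra factor arising from the scaling of $\Phi$, which is absorbed only by taking $p_{0}$ sufficiently small in a quantitative, $\Lambda$- and $L$-dependent way.
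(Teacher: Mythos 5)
Your proposal follows the DKP14 route---Caccioppoli for negative powers, Sobolev--Poincar\'e reverse H\"older step, Moser iteration down to $\inf_{B_{r/2}}\bar u \gtrsim (\mean{B_r}\bar u^{-q_0})^{-1/q_0}$, then the Bombieri--Giusti/John--Nirenberg crossover via the logarithmic estimate---which is precisely the method the paper invokes (it omits the proof and points to \cite{DKP14,Coz17,BDOR}), and your bookkeeping of the $\Phi$-factors and of $d$ to absorb tails at all scales $\rho\le r$ is consistent with the paper's Lemmas~\ref{lem.ccp}, \ref{lem.log} and Corollary~\ref{cor.log}. One minor remark: the BMO bound on $\log\bar u$ itself comes from combining Lemma~\ref{lem.log} with Theorem~\ref{thm:SoboPoin} (Corollary~\ref{cor.log} bounds the oscillation of a \emph{truncation}), but this is an immaterial routing detail and the rest of the plan is sound.
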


We next obtain a tail estimate.
\begin{lemma}\label{lem.tail}
Let $u \in \mathbb{W}^{\phi,p}(\Omega)$ be a weak solution to \eqref{mainPDE} under assumptions \eqref{kernel}, \eqref{Dini}, \eqref{adec} and \eqref{ainc}, which is nonnegative in a ball $B_{R} \equiv B_{R}(x_{0}) \subset \Omega$. Then 
\begin{equation}\label{tail.est}
\tail(u_{+};r) \le c\sup_{B_{r}}u + c\left(\frac{r^{p}}{\Phi(r)}\frac{\Phi(R)}{R^{p}}\right)^{1/(p-1)}\tail(u_{-};R)
\end{equation}
holds whenever $r \in (0,R]$, where $c = c(n,p,s,\tilde{s},\Lambda,L)>0$.
\end{lemma}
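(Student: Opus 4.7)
The plan is to test the weak equation against a cutoff concentrated on $B_r$ and perform a sign analysis on the nonlocal interaction term to extract the $u_+$-tail from the large positive values of $u$ outside $B_r$. Set $M \coloneqq \sup_{B_r}u$, which is finite by Theorem~\ref{thm.bdd}; we may assume $M>0$. Take $\eta \in C_0^\infty(B_{3r/4}(x_0))$ with $\eta\equiv 1$ on $B_{r/2}(x_0)$, $0\le\eta\le 1$, $|D\eta|\le c/r$. Since $u$ is both a super- and a subsolution, testing the weak formulation with $\zeta=\eta^p\ge 0$ yields, using the symmetry of $K$ and that $\eta^p\equiv 0$ on $\mathbb{R}^n\setminus B_{3r/4}$, the identity
\[
I_{\mathrm{loc}} + 2I_{\mathrm{cr}} = 0,
\]
where $I_{\mathrm{loc}}$ is the integral over $B_{3r/4}\times B_{3r/4}$ of the integrand of the equation, and $I_{\mathrm{cr}}=\int_{B_{3r/4}}\!\int_{\mathbb{R}^n\setminus B_{3r/4}} |u(x)-u(y)|^{p-2}(u(x)-u(y))\eta^p(x) K(x,y)\,dy\,dx$.

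The heart of the argument is a three-case analysis of the integrand of $I_{\mathrm{cr}}$, exploiting that $0\le u(x)\le M$ for $x\in B_{3r/4}\subset B_r$: if $u(y)\ge 2M$, then $u(y)-u(x)\ge u(y)/2$, so $|u(x)-u(y)|^{p-2}(u(x)-u(y))\le -2^{1-p}u_+(y)^{p-1}$; if $u(y)\le 0$, then $u(x)-u(y)\ge u_-(y)$, so $|u(x)-u(y)|^{p-2}(u(x)-u(y))\ge u_-(y)^{p-1}$; in the intermediate range $0\le u(y)\le 2M$ one has the bounded estimate $|u(x)-u(y)|^{p-1}\le (2M)^{p-1}$. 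Because $u\ge 0$ throughout $B_R$, the negative case contributes only for $y\in\mathbb{R}^n\setminus B_R$. Using the equivalence $|x-y|\asymp|y-x_0|$ for $x\in\mathrm{supp}\,\eta$ and $y\in\mathbb{R}^n\setminus B_r$ together with the almost-monotonicity of $\phi(t)/t^{n+p}$ coming from \eqref{adec} (as in the tail manipulation at the end of the proof of Lemma~\ref{lem.ccp}), one obtains $\int_{B_{3r/4}}\eta^p(x)K(x,y)\,dx \asymp r^n\phi(|y-x_0|)/|y-x_0|^{n+p}$ for such $y$. Combining the sign analysis with $I_{\mathrm{loc}}+2I_{\mathrm{cr}}=0$ and isolating the contribution of $\{u_+\ge 2M\}\cap(\mathbb{R}^n\setminus B_r)$ produces, after rearrangement,
\[
\frac{r^n}{c}\frac{\Phi(r)}{r^p}[\tail(u_+\chi_{\{u_+\ge 2M\}};r)]^{p-1} \le |I_{\mathrm{loc}}| + cM^{p-1}r^n\frac{\Phi(r)}{r^p} + c\,r^n\frac{\Phi(R)}{R^p}[\tail(u_-;R)]^{p-1}.
\]

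To close the argument, the local term is bounded by $|I_{\mathrm{loc}}|\le cM^{p-1}r^n\Phi(r)/r^p$: since $u\in[0,M]$ on $B_{3r/4}$ one has $|u(x)-u(y)|^{p-1}\le M^{p-1}$ and $|\eta^p(x)-\eta^p(y)|\le c\min(|x-y|/r,1)$, and splitting the resulting integral into $|x-y|\le r$ and $|x-y|>r$, the latter is controlled by \eqref{out.int}, while for the former (potentially singular when $sp\ge 1$) one invokes Remark~\ref{rmk:adec} to assume $s'p<1$ for some $s'\le s$, making $\phi(|x-y|)/|x-y|^{n+p-1}$ integrable via the scaling bound $\phi(t)\le L\phi(r)(t/r)^{(1-s')p}$ from \eqref{adec}. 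Reconstituting the full tail through $[\tail(u_+;r)]^{p-1}\le c[\tail(u_+\chi_{\{u_+\ge 2M\}};r)]^{p-1}+cM^{p-1}$ (the truncated contribution absorbed using \eqref{out.int}), dividing through by $r^n\Phi(r)/r^p$, and taking $(p-1)$-th roots then yields \eqref{tail.est}.

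The main obstacle is the careful bookkeeping that coordinates the three-case sign analysis with the precise kernel comparisons, so that the $u_-$-tail receives exactly the prefactor $[(r^p/\Phi(r))(\Phi(R)/R^p)]^{1/(p-1)}$ predicted by the statement; a secondary delicate step is extracting the sharp $M^{p-1}$ scaling of $|I_{\mathrm{loc}}|$ (rather than a naive $M^p r^{n-p}\Phi(r)$ coming from a Caccioppoli-type lower bound), which requires a precise treatment of the cutoff difference $|\eta^p(x)-\eta^p(y)|$ together with the reduction to $s'p<1$ explained above to control the integrability of $\phi$ near the origin.
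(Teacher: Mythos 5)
Your proof is correct, and it takes a genuinely different route from the paper's. Both start from the same splitting of the weak formulation into a local part over $B_{3r/4}\times B_{3r/4}$ (resp.\ $B_r\times B_r$) and a cross term, but you test with the bare cutoff $\zeta=\eta^p$ while the paper tests with $\zeta=(u-2k)\eta^p$ where $k=\sup_{B_r}u$. The paper's extra factor $u-2k$ (which equals $\approx -k$ on $\supp\eta$) has two effects: it pushes the local part $I_1$ into the Caccioppoli machinery (Young's inequality absorbs the $|u(x)-u(y)|^{p}\eta^{p}$ good term and leaves only $k^{p}\int\!\!\int|\eta(x)-\eta(y)|^{p}K$, where the full power $|\eta(x)-\eta(y)|^p\lesssim(|x-y|/r)^p$ cancels the $|x-y|^p$ in the kernel with no integrability issue); and it automatically encodes the sign analysis of the cross term, delivering an extra factor $k$ on the $u_+$-tail which is divided out at the very end. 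Your choice of $\zeta=\eta^p$ avoids the $(u-2k)$ weighting entirely, so the cross-term sign analysis is done by hand (three cases in $u(y)$), and the local term is estimated by the crude pointwise bound $|u(x)-u(y)|^{p-1}\le M^{p-1}$ paired with $|\eta^p(x)-\eta^p(y)|\lesssim\min(|x-y|/r,1)$. The price is that only one power of $|x-y|$ is cancelled, so you must make $\phi(t)/t^{n+p-1}$ locally integrable; you correctly recognize this and invoke Remark~\ref{rmk:adec} to reduce WLOG to $s'p<1$ (note the same $L$ works, and $s'$ is determined by $s,p$, so the constant dependence $c(n,p,s,\Lambda,L)$ is preserved). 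In exchange, you obtain the $M^{p-1}$ (not $M^{p}$) scaling of the local term directly and no division step is needed.

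Two small points worth tightening in a writeup. First, the claimed two-sided equivalence $\int_{B_{3r/4}}\eta^p(x)K(x,y)\,dx\asymp r^n\phi(|y-x_0|)/|y-x_0|^{n+p}$ holds in the range you state ($y\in\mathbb{R}^n\setminus B_r$, where $|x-y|\ge r/4$ uniformly); for $y\in B_r\setminus B_{3r/4}$ (which only feeds the bounded intermediate case $0\le u(y)\le 2M$) you should instead invoke Fubini and the decay $\eta^p(x)\lesssim\dist(x,\partial B_{3r/4})^p/r^p$ together with \eqref{out.int} to get $\int_{B_{3r/4}}\eta^p(x)\int_{\mathbb{R}^n\setminus B_{3r/4}}K(x,y)\,dy\,dx\lesssim r^n\phi(r)/r^p$. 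Second, in the case $u(y)\le 0$ you record the lower bound $|u(x)-u(y)|^{p-2}(u(x)-u(y))\ge u_-(y)^{p-1}$, but for bounding $|A_3|$ from above one needs the companion upper bound $(u(x)-u(y))^{p-1}\le(M+u_-(y))^{p-1}\le 2^{p-1}(M^{p-1}+u_-(y)^{p-1})$; the $M^{p-1}$ piece is then absorbed using $\Phi(R)/R^p\le L\,\Phi(r)/r^p$. These are cosmetic and the overall argument closes as you describe.
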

\begin{proof}
We take a cut-off function $\eta\in C^{\infty}_{0}(B_{r})$ such that $0 \le \eta \le 1$, $\eta\equiv1$ in $B_{r/2}$ and $|D\eta| \le 8/r$. Testing \eqref{mainPDE} with $\zeta \equiv (u-2k)\eta^{p}$, where $k \coloneqq \sup_{B_{r}}u$, we have
\begin{equation*}
\begin{aligned}
0 & = \int_{B_{r}}\int_{B_{r}}|u(x)-u(y)|^{p-2}(u(x)-u(y))(\zeta(x)-\zeta(y))K(x,y)\,dx\,dy \\
& \quad\; + 2\int_{\mathbb{R}^{n}\setminus B_{r}}\int_{B_{r}}|u(x)-u(y)|^{p-2}(u(x)-u(y))(u(x)-2k)\eta^{p}(x)K(x,y)\,dx\,dy \\
& \eqqcolon I_{1} + I_{2}.
\end{aligned}
\end{equation*}
We observe that $I_{1}$ can be estimated by similar calculations as in the proof of \cite[Lemma~4.2]{DKP14}:
\begin{equation*}
\begin{aligned}
I_{1}  \ge -ck^{p}\int_{B_{r}}\int_{B_{r}}\frac{|\eta(x)-\eta(y)|^{p}}{|x-y|^{p}}\frac{\phi(|x-y|)}{|x-y|^{n}}\,dx\,dy 
& \ge -ck^{p}r^{n-p}\int_{B_{2r}(y)}\frac{\phi(|x-y|)}{|x-y|^{n}}\,dx \\
& \ge -ck^{p}r^{n-p} \Phi(r).
\end{aligned}
\end{equation*}
We next split $I_{2}$ as
\begin{equation*}
\begin{aligned}
I_{2} & \ge \int_{\mathbb{R}^{n}\setminus B_{r}}\int_{B_{r}}k(u(y)-k)_{+}^{p-1}\eta^{p}(x)K(x,y)\,dx\,dy \\
& \quad\; - \int_{\mathbb{R}^{n}\setminus B_{r}}\int_{B_{r}}2k\chi_{\{u(y)<k\}}(u(x)-u(y))_{+}^{p-1}\eta^{p}(x)K(x,y)\,dx\,dy \\
& \eqqcolon I_{2,1} - I_{2,2}.
\end{aligned}
\end{equation*}
In order to estimate $I_{2,1}$, we observe that $|x-y| \ge |y-x_{0}| - |x-x_{0}| \ge \frac{1}{2}|y-x_{0}| \ge \frac{1}{2}r$ and $|x-y| \le |x-x_0| + |y-x_0| \le \frac{3}{2}|y-x_0|$ for any $x\in B_{r/2}$ and $y \in \mathbb{R}^{n}\setminus B_{r}$. Using this fact together with \eqref{adec}, \eqref{ainc} and \eqref{out.int}, we have
\begin{equation*}
\begin{aligned}
I_{2,1} & \ge c^{-1}k\int_{\mathbb{R}^{n}\setminus B_{r}}\int_{B_{r}}\frac{u_{+}^{p-1}(y)}{|x-y|^{p}}\eta^{p}(x)\frac{\phi(|x-y|)}{|x-y|^{n}}\,dxdy - ck^{p}\int_{\mathbb{R}^{n}\setminus B_{r}}\int_{B_{r}}\frac{\eta^{p}(x)}{|x-y|^{p}}\frac{\phi(|x-y|)}{|x-y|^{n}}\,dx\,dy \\
& \ge c^{-1}kr^{n}\int_{\mathbb{R}^{n}\setminus B_{r}}\frac{u_{+}^{p-1}(y)}{|y-x_{0}|^{p}}\frac{\phi(|y-x_{0}|)}{|y-x_{0}|^{n}}\,dy - ck^{p}r^{n}\frac{\Phi(r)}{r^{p}} \\
& = c^{-1}kr^{n}\frac{\Phi(r)}{r^{p}}[\tail(u_{+};r)]^{p-1} - ck^{p}r^{n}\frac{\Phi(r)}{r^{p}}.
\end{aligned}
\end{equation*} 
In a similar way, we estimate $I_{2,2}$ as
\begin{equation*}
\begin{aligned}
I_{2,2} & \le ck\int_{B_{R}\setminus B_{r}}\int_{B_{r}}\frac{k^{p-1}}{|x-y|^{p}}\eta^{p}(x)\frac{\phi(|x-y|)}{|x-y|^{n}}\,dx\,dy \\
& \quad\; + ck\int_{\mathbb{R}^{n}\setminus B_{R}}\int_{B_{r}}\frac{(k+u_{-}(y))^{p-1}}{|x-y|^{p}}\eta^{p}(x)\frac{\phi(|x-y|)}{|x-y|^{n}}\,dx\,dy \\
& \le ck^{p}r^{n}\frac{\Phi(r)}{r^{p}} + ckr^{n}\frac{\Phi(R)}{R^{p}}[\tail(u_{-};R)]^{p-1}.
\end{aligned}
\end{equation*}
Combining the estimates found for $I_{1}$, $I_{2,1}$ and $I_{2,2}$, we obtain \eqref{tail.est}.
\end{proof}

We now prove Theorem~\ref{thm.harnack}.

\begin{proof}[Proof of Theorem~\ref{thm.harnack}]
Let $u$ be a weak solution to \eqref{mainPDE} which is nonnegative in a ball $B_{R}\equiv B_{R}(x_{0})\subset\Omega$, and $r\in (0, R/2]$. 
Note that we can also derive the following estimate as a variation of \eqref{sup.est}:
\begin{equation*}
\sup_{B_{\sigma_{1}r}}u \le \frac{c_{\varepsilon}}{(\sigma_{2}-\sigma_{1})^{n/p}}\left(\mean{B_{\sigma_{2}r}}u^{p}\,dx\right)^{1/p} + \varepsilon\tail(u_{+};\sigma_{1}r)
\end{equation*}
whenever $1 \le \sigma_{1} < \sigma_{2} \le 2$ and $\varepsilon \in (0,1)$, where $c= c(n,p,s,\tilde{s},\Lambda,L)$ and $c_{\varepsilon}= c_{\varepsilon}(n,p,s,\tilde{s},\Lambda,L,\varepsilon)$ are positive constants. We next apply Lemma~\ref{lem.tail} to the second term in the right-hand side, which gives
\begin{equation*}
\begin{aligned}
\sup_{B_{\sigma_{1}r}}u & \le \frac{c_{\varepsilon}}{(\sigma_{2}-\sigma_{1})^{n/p}}\left(\mean{B_{\sigma_{2}r}}u^{p}\,dx\right)^{1/p} + c\varepsilon\sup_{B_{\sigma_{1}r}}u + c\varepsilon\left(\frac{r^{p}}{\Phi(r)}\frac{\Phi(R)}{R^{p}}\right)^{1/(p-1)}\tail(u_{-};R).
\end{aligned}
\end{equation*}
Choosing $\varepsilon=1/(2c)$, reabsorbing terms and then using Young's inequality, we obtain
\begin{equation*}
\begin{aligned}
\sup_{B_{\sigma_{1}r}}u & \le \frac{c}{(\sigma_{2}-\sigma_{1})^{n/p}}\left(\mean{B_{\sigma_{2}r}}u^{p}\,dx\right)^{1/p} + c\left(\frac{r^{p}}{\Phi(r)}\frac{\Phi(R)}{R^{p}}\right)^{1/(p-1)}\tail(u_{-};R) \\
& \le \frac{c}{(\sigma_{2}-\sigma_{1})^{n/p}}\left(\sup_{B_{\sigma_{2}r}}u\right)^{(p-p_{0})/p}\left(\mean{B_{\sigma_{2}r}}u^{p_{0}}\,dx\right)^{1/p} + c\left(\frac{r^{p}}{\Phi(r)}\frac{\Phi(R)}{R^{p}}\right)^{1/(p-1)}\tail(u_{-};R) \\
& \le \frac{c}{(\sigma_{2}-\sigma_{1})^{n/p}}\left(\sup_{B_{\sigma_{2}r}}u\right)^{(p-p_{0})/p}\left(\mean{B_{2r}}u^{p_{0}}\,dx\right)^{1/p} + c\left(\frac{r^{p}}{\Phi(r)}\frac{\Phi(R)}{R^{p}}\right)^{1/(p-1)}\tail(u_{-};R) \\
& \le \frac{1}{2}\sup_{B_{\sigma_{2}r}}u + \frac{c}{(\sigma_{2}-\sigma_{1})^{n/p_{0}}}\left(\mean{B_{2r}}u^{p_{0}}\,dx\right)^{1/p_{0}} + c\left(\frac{r^{p}}{\Phi(r)}\frac{\Phi(R)}{R^{p}}\right)^{1/(p-1)}\tail(u_{-};R)
\end{aligned}
\end{equation*}
for any $1 \le \sigma_{1} < \sigma_{2} \le 2$, where $p_{0} = p_{0}(n,s,\tilde{s},p,\Lambda,L) \in (0,1)$ is the constant determined in Lemma~\ref{lem.wh}. Now, Lemma~\ref{tech.lemma} implies
\begin{equation*}
\sup_{B_{r}}u \le c\left(\mean{B_{2r}}u^{p_{0}}\,dx\right)^{1/p_{0}} + c\left(\frac{r^{p}}{\Phi(r)}\frac{\Phi(R)}{R^{p}}\right)^{1/(p-1)}\tail(u_{-};R),
\end{equation*}
and combining this last estimate with \eqref{wh.est} finally leads to \eqref{harnack.est}.
\end{proof}

\subsection{Nonlocal weak Harnack inequality}
We start with a different version of Caccioppoli type estimate for weak supersolutions to \eqref{mainPDE}. Its proof, which is very similar to that of the logarithmic estimate given in Lemma~\ref{lem.log}, can be performed exactly as in \cite[Lemma~5.1]{DKP14}. Therefore, we shall omit the proof of the next lemma.

\begin{lemma}
Let $u \in \mathbb{W}^{\phi,p}(\Omega)$ be a weak supersolution to \eqref{mainPDE} under assumptions \eqref{kernel}, \eqref{Dini}, \eqref{adec} and \eqref{ainc}, which is nonnegative in $B_{R}\equiv B_{R}(x_{0})\subset\Omega$. Then, with $w \coloneqq (u+d)^{(p-q)/p}$ for any $q \in (1,p)$ and $d>0$, we have 
\begin{equation}\label{ccpw}
\begin{aligned}
& \int_{B_{r}}\int_{B_{r}}|w(x)\eta(x)-w(y)\eta(y)|^{p}\frac{\phi(|x-y|)}{|x-y|^{n+p}}\,dx\,dy \\
& \le c\int_{B_{r}}\int_{B_{r}}(\max\{w(x),w(y)\})^{p}|\eta(x)-\eta(y)|^{p}\frac{\phi(|x-y|)}{|x-y|^{n+p}}\,dx\,dy \\
& \quad\; + c\left(\sup_{x\in \supp\eta}\int_{\mathbb{R}^{n}\setminus B_{r}}\frac{\phi(|x-y|)}{|x-y|^{n+p}}\,dy + d^{1-p}\frac{\Phi(R)}{R^{p}}[\tail(u_{-};R)]^{p-1}\right)\int_{B_{r}}w^{p}(x)\eta^{p}(x)\,dx
\end{aligned}
\end{equation}
for any $r \in (0,3R/4)$ and any nonnegative function $\eta \in C^{\infty}_{0}(B_{r})$, where $c = c(p,s,\tilde{s},q,L,\Lambda) > 0$.
\end{lemma}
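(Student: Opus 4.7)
The plan is to test the weak supersolution formulation with the nonnegative, compactly supported test function
\begin{equation*}
\zeta := (u+d)^{1-q}\eta^p,
\end{equation*}
which is admissible because $u\ge 0$ on $B_R \supset \supp\eta$ and $d>0$ make $(u+d)^{1-q}$ bounded (since $q>1$), and because $\supp\zeta \subset B_r \subset \Omega$. The resulting inequality splits as $0 \le I_{\mathrm{loc}} + 2 I_{\mathrm{nl}}$, where $I_{\mathrm{loc}}$ is the double integral over $B_r\times B_r$ and $I_{\mathrm{nl}}$ the one over $B_r \times (\mathbb{R}^n\setminus B_r)$. The strategy is to bound $-I_{\mathrm{loc}}$ from below by the left-hand side of \eqref{ccpw} minus the first term on the right, via a pointwise algebraic inequality; and to bound $I_{\mathrm{nl}}$ from above by the remaining terms, via tail manipulations mimicking the proof of Lemma~\ref{lem.log}.

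For the local part, the key ingredient is a pointwise inequality in the DKP style: for all $a,b\ge 0$, $\alpha,\beta\ge 0$, and $q\in(1,p)$, writing $A := a+d$, $B := b+d$,
\begin{equation*}
|a-b|^{p-2}(a-b)\bigl(A^{1-q}\alpha^p - B^{1-q}\beta^p\bigr) \le -\tfrac{1}{c}\bigl|A^{(p-q)/p}\alpha - B^{(p-q)/p}\beta\bigr|^p + c\bigl(\max\{A^{(p-q)/p},B^{(p-q)/p}\}\bigr)^p|\alpha-\beta|^p
\end{equation*}
with $c=c(p,q)$. This is obtained by case analysis on $\mathrm{sgn}(a-b)$ combined with Simon-type elementary inequalities, and is already contained (up to notation) in the arguments of \cite{DKP14, DKP16}. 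Applying it with $a=u(x)$, $b=u(y)$, $\alpha=\eta(x)$, $\beta=\eta(y)$, multiplying by $K(x,y)$ and integrating over $B_r\times B_r$ yields the first term on the right-hand side of \eqref{ccpw} plus the negative of the desired left-hand side.

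For the nonlocal part, since $\zeta$ is supported in $B_r$,
\begin{equation*}
I_{\mathrm{nl}} \le \int_{B_r}\int_{\mathbb{R}^n\setminus B_r}(u(x)-u(y))_+^{p-1}(u(x)+d)^{1-q}\eta^p(x)K(x,y)\,dy\,dx.
\end{equation*}
I split the inner integral into the parts over $B_R\setminus B_r$ and over $\mathbb{R}^n\setminus B_R$. On $B_R\setminus B_r$, the bound $u(y)\ge 0$ gives $(u(x)-u(y))_+^{p-1}\le(u(x)+d)^{p-1}$, and since $(u+d)^{p-1}(u+d)^{1-q}=(u+d)^{p-q}=w^p$, this contribution is at most $c\,\sup_{x\in\supp\eta}\!\int_{\mathbb{R}^n\setminus B_r}K(x,y)\,dy\cdot\int_{B_r}w^p\eta^p\,dx$. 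On $\mathbb{R}^n\setminus B_R$, the inequality $(u(x)-u(y))_+^{p-1}\le c_p((u(x)+d)^{p-1}+u_-(y)^{p-1})$ is used; the first summand again produces the $\sup\!\int K$ term, while the second summand is handled using the almost-decreasing property of $t\mapsto\phi(t)/t^{n+p}$ and the geometric estimate $|x-y|\gtrsim|y-x_0|$ for $x\in\supp\eta$ and $y\in\mathbb{R}^n\setminus B_R$ (as in the proof of Lemma~\ref{lem.log}) to get $K(x,y)\lesssim\phi(|y-x_0|)/|y-x_0|^{n+p}$, combined with $(u+d)^{1-q}=w^p(u+d)^{1-p}\le d^{1-p}w^p$. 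This produces exactly the $d^{1-p}\Phi(R)R^{-p}[\tail(u_-;R)]^{p-1}$ factor against $\int_{B_r}w^p\eta^p\,dx$.

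The main obstacle is the pointwise algebraic inequality: verifying it with constants depending only on $p$ and $q$ requires a delicate case analysis (the awkward case is $a,b$ close and of comparable size to $d$, where both $A^{1-q}$ and the $p$-th power interact nontrivially). Everything else reduces to the same kernel-tail template used in Lemma~\ref{lem.ccp} and Lemma~\ref{lem.log}.
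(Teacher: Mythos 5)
Your proposal is correct and follows essentially the same route as the paper, which explicitly defers to \cite[Lemma~5.1]{DKP14} (for the algebraic splitting of the local bilinear form after testing with $(u+d)^{1-q}\eta^p$) together with the tail manipulations already carried out in Lemma~\ref{lem.log} (the split of the nonlocal contribution over $B_R\setminus B_r$ and $\mathbb{R}^n\setminus B_R$, the bound $(u+d)^{1-q}\le d^{1-p}w^p$, and the kernel comparison $K(x,y)\lesssim \phi(|y-x_0|)/|y-x_0|^{n+p}$ via the almost-decreasing property). One minor remark: the geometric step and kernel comparison introduce dependence on $n$ and $\Lambda$ into the constant multiplying the tail term (as is the case in Lemma~\ref{lem.ccp}), so the constant is really $c(n,p,q,\Lambda,L)$ rather than $c(p,q,L)$ as printed in the statement — this is an issue with the paper's stated dependency, not with your argument.
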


Using the above lemma, we prove Theorem~\ref{thm.wh}.

\begin{proof}[Proof of Theorem~\ref{thm.wh}]
Let $1/2 \le \sigma' <\sigma \le 3/4$ and let $\eta \in C^{\infty}_{0}(B_{\sigma r})$ be a nonnegative cut-off function such that $\eta \equiv 1$ in $B_{\sigma'r}$ and $|D\eta| \le 4/[(\sigma-\sigma')r]$. We fix an exponent $\kappa$ such that
\begin{equation*}
1<\kappa < 
\begin{cases}
n/(n-s p) & \text{if }\ s p < n, \\
\infty & \text{if }\ s p \ge n,
\end{cases}
\end{equation*}
and then apply Theorem~\ref{thm:SoboPoin} and Corollary~\ref{cor.SP} to $v=w\eta$, where $w \coloneqq \bar{u}^{(p-q)/p} \coloneqq (u+d)^{(p-q)/p}$, which gives
\begin{equation*}
\left(\mean{B_{r}}(w\eta)^{p\kappa}\,dx\right)^{1/\kappa} \le c\frac{r^{p}}{\Phi(r)}\mean{B_{r}}\int_{B_{r}}|w(x)\eta(x)-w(y)\eta(y)|^{p}K(x,y)\,dx\,dy + c\mean{B_{r}}(w\eta)^{p}\,dx.
\end{equation*}
Moreover, we have
\begin{equation*}
\int_{B_{r}}\int_{B_{r}}(\max\{w(x),w(y)\})^{p}|\eta(x)-\eta(y)|^{p}K(x,y)\,dx\,dy \le \frac{c}{(\sigma-\sigma')^{p}}\frac{\Phi(r)}{r^{p}}\int_{B_{\sigma r}}w^{p}\,dx.
\end{equation*}
Connecting the above two estimates to \eqref{ccpw}, and using the fact that
\[ \sup_{x\in\supp\eta}\int_{\mathbb{R}^{n}\setminus B_{r}}K(x,y)\,dy \le cr^{n}\frac{\Phi(r)}{r^{p}}, \]
we get
\begin{equation*}
\left(\mean{B_{r}}(w\eta)^{p\kappa}\,dx\right)^{1/\kappa}
 \le c\left\{\frac{1}{(\sigma-\sigma')^{p}} + d^{1-p}\frac{r^{p}}{\Phi(r)}\frac{\Phi(R)}{R^{p}}[\tail(u_{-};R)]^{p-1}\right\}\mean{B_{\sigma r}}w^{p}\,dx.
\end{equation*}
Now, choosing 
\[ d = \frac{1}{2}\left(\frac{r^{p}}{\Phi(r)}\frac{\Phi(R)}{R^{p}}\right)^{1/(p-1)}\tail(u_{-};R) \]
in the above estimate and recalling the definition of $w$, we have
\begin{equation*}
\left(\mean{B_{\sigma'r}}\bar{u}^{(p-q)\kappa}\,dx\right)^{1/\kappa} \le \frac{c}{(\sigma-\sigma')^{p}}\mean{B_{\sigma r}}\bar{u}^{p-q}\,dx
\end{equation*}
whenever $1/2 \le \sigma' < \sigma \le 3/4$ and $q \in (1,p)$, where $c=c(n,p,s,\tilde{s},\Lambda,L)$. Then a standard, finite Moser iteration (see for instance \cite[Theorem~8.18]{GT} and \cite[Theorem~1.2]{Tru67}) leads to
\begin{equation*}
\left(\mean{B_{r/2}}\bar{u}^{t}\,dx\right)^{1/t} \le c\left(\mean{B_{3r/4}}\bar{u}^{t'}\,dx\right)^{1/t'}
\end{equation*}
for any $0<t'<t<\bar{t}$. 
We finally choose $t' = p_{0}$, the exponent determined in Lemma~\ref{lem.wh}, and note that, since $\bar{u}$ is a weak supersolution to \eqref{mainPDE}, estimate \eqref{wh.est} holds for $\bar{u}$. Connecting it to the above display, we arrive at
\begin{equation*}
\begin{aligned}
\left(\mean{B_{r/2}}u^{t}\,dx\right)^{1/t} 
& \le \left(\mean{B_{r/2}}\bar{u}^{t}\,dx\right)^{1/t} \le c\left(\mean{B_{3r/4}}\bar{u}^{p_0}\,dx\right)^{1/p_0}  \\
& \le c\inf_{B_{r}}\bar{u} + c\left(\frac{r^{p}}{\Phi(r)}\frac{\Phi(R)}{R^{p}}\right)^{1/(p-1)}\tail(\bar{u}_{-};R) .
\end{aligned}
\end{equation*}
Recalling the definition of $d$, we obtain \eqref{weak.harnack.est} for $t \in (p_{0},\bar{t})$. For lower values of $t$, H\"older's inequality gives the same conclusion.
\end{proof}

\subsection*{Author contributions} All authors wrote the main manuscript text and reviewed the manuscript.  

\subsection*{Conflict of interest} The authors declare that they have no conflict of interest.

\subsection*{Data availability} Data sharing not applicable to this article as no datasets were generated or analyzed during the current study.


\begin{thebibliography}{10}

\bibitem{Bae15}
J. Bae, \emph{Regularity for fully nonlinear equations driven by spatial-inhomogeneous nonlocal operators}, Potential Anal. \textbf{43} (2015), no.~4, 611--624.



\bibitem{BK15}
J. Bae and M. Kassmann, \emph{Schauder estimates in generalized H\"older spaces}, Preprint (2015), \href{https://arxiv.org/abs/1505.05498}{arXiv:1505.05498}


\bibitem{BKKL19}
J. Bae, J. Kang, P. Kim and J. Lee,
\emph{Heat kernel estimates for symmetric jump processes with mixed polynomial growths},
Ann. Probab. \textbf{47} (2019), no. 5, 2830--2868.

\bibitem{BK05a}
R. Bass and M. Kassmann, \emph{Harnack inequalities for non-local operators of variable order}, Trans. Amer. Math. Soc. {\bf 357} (2005), no.~2, 837--850.

\bibitem{BK05b}
R. Bass and M. Kassmann, \emph{H\"older continuity of harmonic functions with respect to operators of variable order}, Comm. Partial Differential Equations {\bf 30} (2005), no.~7-9, 1249--1259.

\bibitem{BDOR}
L. Behn, L. Diening, J. Ok, and J. Rolfes, \emph{Nonlocal equations with degenerate weights}, Preprint (2024), \href{https://arxiv.org/abs/2409.11829}{arXiv:2409.11829}

\bibitem{BBM01}
J. Bourgain, H. Brezis, and P. Mironescu, \emph{Another look at Sobolev spaces}, Optimal control and partial differential equations, IOS, Amsterdam, 2001, pp.~439--455.

\bibitem{BBM02}
J. Bourgain, H. Brezis, and P. Mironescu, \emph{Limiting embedding theorems for {$W^{s,p}$} when {$s\uparrow1$} and applications}, J. Anal. Math. \textbf{87} (2002), 77--101.

\bibitem{BBM04}
J. Bourgain, H. Brezis, and P. Mironescu, \emph{{$H^{1/2}$} maps with values into the circle: minimal connections, lifting, and the {G}inzburg-{L}andau equation}, Publ. Math. Inst. Hautes \'Etudes Sci. (2004), no.~99, 1--115.

\bibitem{Bre02}
H. Brezis, \emph{How to recognize constant functions. A connection with Sobolev spaces}, Russian Math. Surveys {\bf 57} (2002), no.~4, 693--708; translated from Uspekhi Mat. Nauk {\bf 57} (2002), no.~4(346), 59--74.

\bibitem{BKO}
S.-S. Byun, H. Kim, and J. Ok, \emph{Local {H}\"{o}lder continuity for fractional nonlocal equations with general growth}, Math. Ann. \textbf{387} (2023), no.~1-2, 807--846.

\bibitem{BKS}
S.-S. Byun, H. Kim, and K. Song, \emph{Nonlocal {H}arnack inequality for fractional elliptic equations with {O}rlicz growth}, Bull. Lond. Math. Soc. \textbf{55} (2023), no.~5, 2382--2399.

\bibitem{BK}
S.-S. Byun and K. Kim, \emph{$L^{q}$ estimates for nonlocal $p$-Laplacian type equations with BMO kernel coefficients in divergence form}, Commun. Contemp. Math. (in press), \href{https://doi.org/10.1142/S0219199725500129}{DOI:10.1142/s0219199725500129}

\bibitem{BOS} S.-S. Byun, J. Ok, and K. Song, \emph{H\"{o}lder regularity for weak solutions to nonlocal double phase problems}, J. Math. Pures Appl. (9) \textbf{168} (2022), 110--142.

\bibitem{CCV11}
L.~Caffarelli, C.~Chan, and A.~Vasseur, \emph{Regularity theory for parabolic nonlinear integral operators}, J. Amer. Math. Soc. \textbf{24} (2011), no.~3, 849--869.

\bibitem{CS07}
L.~Caffarelli and L.~Silvestre, \emph{An extension problem related to the fractional {L}aplacian}, Comm. Partial Differential Equations \textbf{32} (2007), no.~7-9, 1245--1260.

\bibitem{CK}
J. Chaker and M. Kim, \emph{Local regularity for nonlocal equations  with variable exponents}, Math. Nachr. \textbf{296} (2023), no.~9, 4463--4489.

\bibitem{CKW22}
J. Chaker, M. Kim, and M. Weidner, \emph{Regularity for nonlocal problems with non-standard growth}, Calc. Var. Partial Differential Equations \textbf{61} (2022), no.~6, Paper No. 227. 

\bibitem{CKW23}
J. Chaker, M. Kim, and M. Weidner, \emph{Harnack inequality for nonlocal problems with non-standard growth}, Math. Ann. \textbf{386} (2023), no.~1-2, 533--550.

\bibitem{Coz17}
M. Cozzi, \emph{Regularity results and {H}arnack inequalities for minimizers and solutions of nonlocal problems: a unified approach via fractional {D}e {G}iorgi classes}, J. Funct. Anal. \textbf{272} (2017), no.~11, 4762--4837. 

\bibitem{DP19}
C. De Filippis and G. Palatucci, \emph{H\"{o}lder regularity for nonlocal double phase equations}, J. Differential Equations \textbf{267} (2019), no.~1, 547--586. 

\bibitem{DKP14}
A. Di Castro, T. Kuusi, and G. Palatucci, \emph{Nonlocal {H}arnack inequalities}, J. Funct. Anal. \textbf{267} (2014), no.~6, 1807--1836. 

\bibitem{DKP16}
A. Di Castro, T. Kuusi, and G. Palatucci, \emph{Local behavior of fractional {$p$}-minimizers}, Ann. Inst. H. Poincar\'{e} C Anal. Non Lin\'{e}aire \textbf{33} (2016), no.~5, 1279--1299.

\bibitem{DPV}
E. Di Nezza, G. Palatucci, and E. Valdinoci, \emph{Hitchhiker's guide to the fractional {S}obolev spaces}, Bull. Sci. Math. \textbf{136} (2012), no.~5, 521--573.

\bibitem{DN}
L. Diening and S. Nowak, \emph{Calder\'on--Zygmund estimates for the fractional $p$-Laplacian}, Ann. PDE \textbf{11} (2025), no.~1, Paper No. 6. 

\bibitem{DGM}
F. Duzaar, A. Gastel, and G. Mingione, \emph{Elliptic systems, singular sets and Dini continuity}, Comm. Partial Differential Equations \textbf{29} (2004), no. 7-8, 1215–1240.

\bibitem{DK19}
B. Dyda and M. Kassmann, \emph{Function spaces and extension results for nonlocal {D}irichlet problems}, J. Funct. Anal. \textbf{277} (2019), no.~11, 108134, 22.

\bibitem{FRRO}
X. Fern\'andez-Real and X. Ros-Oton, \emph{Integro-differential elliptic equations}, Progress in Mathematics, vol. 350, Birkh\"auser/Springer, Cham, 2024.

\bibitem{GT} 
D. Gilbarg and N. S. Trudinger, \emph{Elliptic partial differential equations of second order}, Springer-Verlag, Berlin, 2001.

\bibitem{Giu}
E. Giusti, \emph{Direct methods in the calculus of variations}, World Scientific Publishing Co., Inc., River Edge, NJ, 2003.


\bibitem{HarLi28}  
G. H. Hardy and J. E. Littlewood,
\emph{Some properties of fractional integrals. I},
Math. Z. \textbf{27} (1928), no. 1, 565--606.

\bibitem{HH19book} P. Harjulehto and P. H\"ast\"o,  \emph{Orlicz Spaces and Generalized Orlicz Spaces}, Lecture Notes in Mathematics,
vol. 2236. Springer, Cham (2019).

\bibitem{Hedberg} L. I. Hedberg, 
\emph{On certain convolution inequalities},
Proc. Amer. Math. Soc. \textbf{36} (1972), 505--510.

\bibitem{IN10}
H. Ishii and G. Nakamura, \emph{A class of integral equations and approximation of $p$-Laplace equations}, Calc. Var. Partial Differential Equations \textbf{37} (2010), 485--522. 

\bibitem{Kas07} M. Kassmann, \emph{The classical Harnack inequality fails for nonlocal operators}, SFB 611-preprint 360 (2007), available at \url{http://webdoc.sub.gwdg.de/ebook/serien/e/sfb611/360.pdf}

\bibitem{Kas09}
M. Kassmann, \emph{A priori estimates for integro-differential operators with measurable kernels}, Calc. Var. Partial Differential Equations \textbf{34} (2009), 1--21.

\bibitem{Kas11}
M. Kassmann, \emph{Harnack inequalities and H\"older regularity estimates for nonlocal operators revisited}, Preprint (2011), available at \url{http://www.math.uni-bielefeld.de/sfb701/preprints/view/523}

\bibitem{KK15}
I. Kim and K. Kim,
\emph{A H\"older regularity theory for a class of non-local elliptic equations related to subordinate Brownian motions},
Potential Anal. \textbf{43} (2015), no. 4, 653--673.
  

\bibitem{KKK13} I. Kim, K. Kim, and P. Kim, \emph{Parabolic Littlewood-Paley inequality for $\phi(-\Delta)$-type operators and applications to stochastic integro-differential equations}, Adv. Math. \textbf{249} (2013), 161--203.


\bibitem{KS15} K. Kaleta and P. Sztonyk, \emph{Estimates of transition densities and their derivatives for jump L\'evy processes}, 
J. Math. Anal. Appl. \textbf{431} (2015), no. 1, 260--282.


\bibitem{KKLL}
M. Kim, P. Kim, J. Lee, and K.-A. Lee, \emph{Boundary regularity for nonlocal operators with kernels of variable orders}, J. Funct. Anal. \textbf{277} (2019), no. 1, 279--332.

\bibitem{KKL19}
J.~Korvenp\"{a}\"{a}, T.~Kuusi, and E.~Lindgren, \emph{Equivalence of solutions to fractional {$p$}-{L}aplace type equations}, J. Math. Pures Appl. (9) \textbf{132} (2019), 1--26.

\bibitem{KMS15a}
T. Kuusi, G. Mingione, and Y. Sire, \emph{Nonlocal equations with measure data}, Comm. Math. Phys. \textbf{337} (2015), no. 3, 1317--1368.

\bibitem{Lin16}
E. Lindgren, \emph{H\"{o}lder estimates for viscosity solutions of equations of fractional {$p$}-{L}aplace type}, NoDEA Nonlinear Differential Equations Appl. \textbf{23} (2016), no.~5, Art. 55, 18.


\bibitem{Ok23}
J. Ok, \emph{Local {H}\"{o}lder regularity for nonlocal equations with variable powers}, Calc. Var. Partial Differential Equations \textbf{62} (2023), no.~1, Paper No. 32.

\bibitem{Ponce}
A. Ponce, \emph{An estimate in the spirit of Poincar\'e's inequality}, J. Eur. Math. Soc. (JEMS) \textbf{6} (2004), no.~1, 1--15.

\bibitem{Sch}
A. Schikorra, \emph{Nonlinear commutators for the fractional p-Laplacian and applications}, Math. Ann. \textbf{366} (2016), no. 1-2, 695--720.

\bibitem{Sil}
L. Silvestre, 
\emph{H\"older estimates for solutions of integro-differential equations like the fractional Laplace}, Indiana Univ. Math. J. \textbf{55} (2006), no.~3, 1155--1174.

\bibitem{Sobolev}
S. L. Sobolev, 
\emph{On a theorem of functional analysis}, Mat. Sb. \textbf{4} (46) (1938), 471--497; English transl., Amer. Math. Soc. Transl. (2) \textbf{34} (1963), 39--68.

\bibitem{Sz17}
P. Sztonyk, \emph{Estimates of densities for L\'evy processes with lower intensity of large jumps},
Math. Nachr. \textbf{290} (2017), no. 1, 120--141.

\bibitem{Tru67}
N. S. Trudinger, \emph{On Harnack type inequalities and their application to quasilinear elliptic equations}, Comm. Pure Appl. Math. \textbf{20} (1967), 721--747.

\end{thebibliography}
\end{document}